\newcommand{\R}{\mathbb{R}}			
\newcommand{\N}{\mathbb{N}}			
\newcommand{\Pb}{\mathscr{P}}		
\newcommand{\Pbac}{\mathscr{P}^{ac}}
\newcommand{\Xc}{\mathcal{X}}		
\newcommand{\T}{{[0,T]}}
\newcommand{\I}{[a,b]}				
\newcommand{\J}{{[0,1]}}			
\newcommand{\Jo}{{(0,1)}}
\newcommand{\TJ}{{[0,T]\times[0,1]}}
\newcommand{\TJoo}{{(0,T)\times(0,1)}}
\newcommand{\TJoc}{{(0,T)\times[0,1]}}
\newcommand{\Var}{\mathrm{Var}}
\renewcommand{\c}{{\sf c}}
\newcommand{\dc}{\c^\prime}
\newcommand{\ddc}{\c^\pprime}
\newcommand{\ddcb}{\underline{\ddc}} 		
\newcommand{\cs}{\c^*}
\newcommand{\dcs}{(\c^*)^\prime}
\newcommand{\ddcs}{(\c^*)^\pprime}
\newcommand{\ct}{{\tilde \c}}
\newcommand{\ls}{\gamma}
\newcommand{\hu}{\mathsf{h}_m}				
\newcommand{\hX}{h_m}						
\newcommand{\dhX}{h_m^\prime}				
\newcommand{\ddhX}{h_m^\pprime}				
\newcommand{\sh}{\mathsf{h}}				
\newcommand{\vu}{\mathsf{v}}				
\newcommand{\dvu}{\mathsf{v}^\prime}
\newcommand{\vX}{v}							
\newcommand{\dvX}{v^\prime}
\newcommand{\ddvX}{v^\pprime}
\newcommand{\ddvXa}{\overline{\ddvX}}		
\newcommand{\Wuc}{\mathsf{W}_{\c}}
\newcommand{\Wuct}{\mathsf{W}_{\c,\tau}}
\newcommand{\WXc}{\mathbf{W}_{\c}}
\newcommand{\WXct}{\mathbf{W}_{\c,\tau}}
\newcommand{\WXdctk}{W_{\c,\tau_k}}
\newcommand{\Hu}{\mathsf{H}_m}
\newcommand{\HX}{\mathbf{H}_m}
\newcommand{\HXd}{H_m}
\newcommand{\HXdb}{\underline{H_m}}				
\newcommand{\VXd}{V}
\newcommand{\VXdb}{\underline{V}}				
\newcommand{\UXd}{U}
\newcommand{\F}{\mathfrak{F}}		
\newcommand{\g}{\mathbf{g}}
\newcommand{\dk}{\frac{1}{k}}
\newcommand{\bx}{\mathbf{x}}
\newcommand{\bxn}[1]{\mathbf{x}^{(#1)}}
\newcommand{\xn}[1]{x^{(#1)}}
\newcommand{\dxub}{\overline{\delta\bxn{0}}}
\newcommand{\dxlb}{\underline{\delta\bxn{0}}}
\newcommand{\ddxub}{\overline{\delta^2\bxn{0}}}
\newcommand{\ddxlb}{\underline{\delta^2\bxn{0}}}
\newcommand{\Xn}[1]{X^{(#1)}}
\newcommand{\dexi}{\delta_\xi}
\newcommand{\detau}{\delta_{\tau_k}}
\newcommand{\Aink}{a_{i,k}^{(n)}}
\newcommand{\bA}{\textbf{A}}			
\newcommand{\Ak}{A_k}
\newcommand{\olAk}{\overline{A}_k}			 
\newcommand{\ulAk}{\underline{A}_k}
\newcommand{\As}{A_*} 						
\newcommand{\CdelH}{\mathfrak{V}}
\newcommand{\CdelHs}{\mathfrak{V}_*}
\newcommand{\IndTX}{\mathcal{I}^{(n)}_{i,k}}
\newcommand{\as}{\text{(A.1)}}
\newcommand{\aspa}{\text{(A.2)}}
\newcommand{\aspb}{\text{(A.3)}}
\newcommand{\asfa}{\text{(Af.1)}}
\newcommand{\asfb}{\text{(Af.2)}}
\newcommand{\asfc}{\text{(Af.3)}}
\newcommand{\asfd}{\text{(Af.4)}}
\newcommand{\asv}{\text{(A.V.)}}
\newcommand{\de}{\;\mathrm{d}}						
\newcommand{\Lc}{\mathcal{L}} 						
\newcommand{\glqq}{``}								
\newcommand{\grqq}{''}
\newcommand{\norm}[1]{\left\| #1 \right\|}			
\newcommand{\abs}[1]{\left| #1 \right|}
\newcommand{\pprime}{{\prime\prime}}				
\newcommand{\ind}[1]{\mathbf{1}_{#1}}				
\newcommand{\eexp}[1]{\mathrm{e}^{#1}}				
\newcommand{\floor}[1]{\left\lfloor #1 \right\rfloor}	
\newcommand{\argmin}{\rm \arg min\,}
\definecolor{colKeys}{rgb}{0,0,1} 
\definecolor{colIdentifier}{rgb}{0,0,0} 
\definecolor{colComments}{rgb}{0,1,0.3} 
\definecolor{colString}{rgb}{0,0.5,0} 
\definecolor{dkgreen}{rgb}{0,0.6,0} 
\definecolor{gray}{rgb}{0.5,0.5,0.5} 
\definecolor{lightgray}{rgb}{0.9,0.9,0.9} 
\newtheorem{theorem}{Theorem}
\newtheorem{lemma}[theorem]{Lemma}
\newtheorem{corollary}[theorem]{Corollary}
\newtheorem{proposition}[theorem]{Proposition}
\newtheorem*{algorithm}{Algorithm}
\theoremstyle{definition}
\newtheorem{definition}[theorem]{Definition}
\theoremstyle{remark}
\newtheorem{remark}[theorem]{Remark}
\newtheorem{remarks}[theorem]{Remarks}
\numberwithin{equation}{section}
\begin{document}
\raggedbottom
\title[Discretization of $p$-Wasserstein equations]{A convergent Lagrangian discretization for $p$-Wasserstein and flux-limited diffusion equations}

\author{Benjamin S\"ollner}
\address{Zentrum Mathematik, TU M\"unchen, Boltzmannstr. 3, D-95748 Garching, Germany}
\email{b.soellner@ma.tum.de}
\thanks{}

\author{Oliver Junge}
\email{oj@ma.tum.de}
\thanks{}


\begin{abstract}
We study a Lagrangian numerical scheme for solving a nonlinear drift diffusion equations of the form $\partial_t u = \partial_x(u \cdot\dcs[\partial_x\sh^\prime(u)+\dvu])$, like Fokker-Plank and $q$-Laplace equations, on an interval. This scheme will consist of a spatio-temporal discretization founded on the formulation of the equation in terms of inverse distribution functions. It is based on the gradient flow structure of the equation with respect to optimal transport distances for a family of costs that are in some sense $p$-Wasserstein like. Additionally we will show that, under a regularity assumption on the initial data, this also includes a family of discontinuous, flux-limiting cost inducing equations like Rosenau's relativistic heat equation. We show that this discretization inherits various properties from the continuous flow, like entropy monotonicity, mass preservation, a minimum/maximum principle and flux-limitation in the case of the corresponding cost. Convergence in the limit of vanishing mesh size will be proven as the main result. Finally we will present numerical experiments including a numerical convergence analysis. 
\end{abstract}

\maketitle

\section{Introduction}
In this paper, we want to study a spatio-temporal discretization of a family of parabolic equations,
\begin{equation}\label{PDE_u}
\partial_t u = \partial_x(u \cdot\dcs[\partial_x\sh^\prime(u)+\dvu])
\end{equation} 
for a probability density $u$ on a bounded interval $I=\I$ where $u$ is a probability density. Equations like both, the linear and non-linear Fokker-Plank equation, the $q$-Laplace equation and Rosenau's relativistic heat equation are included. 

This equation will be equipped with no-flux boundary conditions 
\begin{equation}\label{BddCond_u}
\partial_x u(t,a)=\partial_x u(t,b)=0 \quad \text{ for }t\geq 0
\end{equation}
and initial conditions
\begin{equation}\label{IniCond_u}
u(0,x)=u_0(x) \quad \text{ for }x\in \I
\end{equation}
where $M\geq u_0\geq 1/M$ for some $M>0$ and $\norm{u_0}_{L^1(\I)}=1$.

We will consider external potentials $\vu\in C^2(\I)$ which will be convex and internal energy potentials $\hu:[0,\infty)\rightarrow \R$ which will either be the Boltzmann entropy $\mathsf{h}_1(s)=s\log(s)+s$ (where we take $0\log(0)=0$) or the R\'{e}nyi entropy $\hu(s)=\frac{1}{m-1}s^m$.  

$\cs$ will denotes the Legendre-transform of $\c$ i.e. $\cs(s):=\sup_{r\in \R} (sr-\c(r))$. 
We consider a family of functions for $\c$, that can be called \glqq p-Wasserstein-like\grqq\ cost functions. A cost function is called a $p$-Wasserstein cost function (or Monge-Kantorovich cost of order $p$) if it has the form $s\mapsto c(s)=\frac{1}{p}\abs{s}^p$ (c.f. \cite{VillaniTopics}).

Let $p\in (1,\infty)$. To be a \glqq p-Wasserstein-like\grqq\ cost function, $\c$ has to have the following properties.
\begin{enumerate}
\item[\as] $\c$ is non-negative, strictly convex, continuous and even with $\c(0)=0$.
\item[\aspa] $\c\in C^1(\R)\cap C^2(\R\setminus\{0\})$.
\item[\aspb] There are constants $\alpha,\beta >0$ such that 
\begin{align*}
\alpha \abs{s}^{p} \leq s \dc(s) \leq \beta\abs{s}^{p} 
\end{align*}
holds for every $ s\in \R$.
\end{enumerate}
One family of examples would be the $p$-Wasserstein cost mentioned above $\c(s)=\frac{1}{p}\abs{s}^p$ for $p\in (1,\infty)$, which induces the $p$-Wasserstein distance and lends its name to this family of cost functions. 

We present two PDEs arising from certain choices of parameters.
\begin{itemize}
\item Let $q\in \N$ with $q>1$. Then choose $p\in (1,2)$ such that $\frac{2-p}{p-1}=q$ and pick $m=3-p>1$. For any constant external potential, the equation becomes the $q$-Laplace equation
\begin{align*}
\partial_t u= \partial_x(\abs{\partial_x u}^{q}\cdot \partial_x u)\;.
\end{align*}
The dynamic of such a $q$-Laplace equation can be seen in Fig.~\ref{fig:evol_rel_q-Laplace}. 
\item Let $p=2$. Then the equation has the form of a Fokker-Plank equation
\begin{align*}
\partial_t u = \frac{1}{m} \partial_x^2 u^m + \partial_x(u\dvu) \;.
\end{align*}
\end{itemize}

If we chose $\vu=\mathrm{const.}$, then the above properties of $\c$ will suffice to show the claims of this paper. If $\vu$ is not a constant, however, we require $\c$ to be $2$-Wasserstein like. For example, if $p\geq 2$, $\ddc(0)=0$ is implied which leads to $\dcs(r)=(\dc)^{-1}(r)$ not being Lipschitz any more at $r=0$. Therefore we cannot guarantee well posedness of our equation. 

Assuming some regularity of the initial data, another interesting family of cost functions is included in our calculations. Let $\partial_x u_0(x)$, the weak spatial derivative of our initial data, be bounded. We will consider a family of cost functions, called \glqq flux-limiting\grqq, satisfying \as\ and the following properties: 
\begin{itemize}
\item[\asfa] $\c^{-1}([0,\infty))=[-\ls,\ls]$ for some $\ls>0$, so it has to have a symmetric compact proper domain.
\item[\asfb] $\c\in C^2((-\ls,\ls))$. 
\item[\asfc] The limit $ \dc(s)\rightarrow \pm \infty$ for $s\rightarrow \pm \ls$ and  $s\in(-\ls,\ls)$ holds.
\item[\asfd] There is $\alpha,\beta >0$ such that 
\begin{align*}
\alpha \abs{s}^{2} \leq s \dc(s) \leq \beta \abs{s}^2
\end{align*}
holds.
\end{itemize}


An exemplary family of flux-limiting costs would be 
\begin{equation}\label{eq:c_rel}
\c:\R\rightarrow [0,\infty]\quad \text{ with } \quad \c(s)=\begin{cases}
\ls\left(1-\sqrt{1-\abs{\tfrac{s}{\ls}}^2}\right) &\text{ for } \abs{s}\leq \ls\\
+\infty &\text{ elsewhere,}
\end{cases}
\end{equation}
where $\ls>0$ (c.f. \cite{McCannPuel}). 

In these cases and with $m=1$ our equation becomes Roseanu's tempered diffusion equation introduced in \cite{rosenau1992tempered}:
\begin{align*}
\partial_t u = \ls \partial_x\left( u \cdot \frac{\partial_x u}{\sqrt{ u^2 +(\partial_x u)^2}} \right)\,.
\end{align*}

\subsection{Gradient flow structure} The equation \eqref{PDE_u} can be written as a transport equation 
\begin{align}\label{PDE_transport}
\partial_t u + \partial_x (u\cdot \mathcal{V}[u])=0
\end{align}
where the velocity $\mathcal{V}$ itself depends on the solution 
\begin{align}\label{PDE_transport_vel}
\mathcal{V}[u]=-\dcs[\partial_x\sh^\prime(u)+\dvu]\;. 
\end{align}
This form of \eqref{PDE_u} together with the no-flux boundary conditions implies conservation of mass, i.e. 
$$\int_I u(t,x)\de x=\int_I u_0(x)\de x \;. $$ 
 
To arrive at the in-time discretization of our problem we will make use of the following fact (c.f. \cite{Agueh}). Solutions to our equation form a gradient flow in the energy landscape of 
\begin{equation}\label{def:u_energy_landscape}
\Hu(\rho)=\begin{cases}
\displaystyle \int_I \hu(u(x))\de x+\int_I \vu(x)u(x)\de x &\text{ if } \rho = u \Lc \\
+\infty &\text{ if } \rho \in \Pb(\I)\setminus\Pbac(\I)
\end{cases} 
\end{equation}
with respect to the metric induced by the optimal transport distance with cost $\c$. Note that we will write $\Lc$ for the Lebesgue measure. 

\subsection{Inverse Distribution Functions}
We will carry out our analysis not on the densities $u$ but much rather on the inverse distribution functions $X$ of $u$. 

Recall that for a probability measure $\rho\in \Pb(I)$ with density $u$ such that $u+\frac{1}{u}\in L^\infty(I)$, the (cummulative) distribution function is given as $U_u(x):=\int_a^x u(\zeta) \de \zeta$ and the corresponding inverse distribution function is given by $X_u=U_u^{-1}$. Furthermore note that in this case, $X$ is a.e. differentiable with $\partial_\xi X= \frac{1}{u\circ X}$. 

This shift from $u$ to $X_u$ allows us to pass from the set $\Pb(I)$ equipped with the optimal transport metric $\Wuc(u,w)$ to the space $L^p([0,1])$ equipped with $\WXc(X_u,X_w)$ (defined in the Lemma below) which is equivalent to the $L^p$-distance of $X_u$ to $X_w$ as we can see by \aspb\ which tells us that $\frac{\alpha}{p} \abs{s}^p \leq \c(s)\leq \frac{\beta}{p} \abs{s}^p$. 

This shift is rigorously justified by the following Lemma that is an adaption of \cite[Thrm. 2.17.]{Santambrogio}:

\begin{lemma}
Let $\rho,\mu\in \Pb(\I)$ with densities $u,w$ such that $u+\frac{1}{u},w+\frac{1}{w}\in L^\infty(\I)$. Then the optimal transport distance $\Wuct(u,w)$ can be expressed in terms of IDFs $X_u,X_w$ as follows
\begin{align}
\Wuct(u,w)=\int_0^1 \c\left(\frac{X_u(\xi)-X_w(\xi)}{\tau}\right)\de \xi=:\WXct(X_u,X_w) \;.
\end{align}
\end{lemma}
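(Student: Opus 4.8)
The plan is to exhibit the monotone rearrangement as a concrete transport plan whose cost equals the right-hand side, and then to invoke the one-dimensional theory for convex costs to show this plan is optimal. First I would record the consequences of the regularity hypothesis: since $u+\tfrac{1}{u},\,w+\tfrac{1}{w}\in L^\infty(\I)$, both densities are bounded above and bounded away from zero, so the distribution functions $U_u,U_w$ are strictly increasing and bi-Lipschitz. Hence the IDFs $X_u=U_u^{-1}$ and $X_w=U_w^{-1}$ are well defined, Lipschitz and strictly increasing, differentiable a.e.\ with $\partial_\xi X_u=1/(u\circ X_u)$ and likewise for $X_w$.

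Next I would introduce the monotone map $T:=X_w\circ U_u\colon\I\to\I$ and check that it pushes $\rho$ forward to $\mu$, which is immediate from $U_w\circ T=U_u$. Its transport cost is
\begin{align*}
\int_\I \c\!\left(\frac{x-T(x)}{\tau}\right)u(x)\de x,
\end{align*}
and the change of variables $\xi=U_u(x)$, for which $\de\xi=u(x)\de x$, $x=X_u(\xi)$ and $T(x)=X_w(\xi)$, turns this into $\int_0^1\c\!\left(\frac{X_u(\xi)-X_w(\xi)}{\tau}\right)\de\xi=\WXct(X_u,X_w)$. The absolute continuity of $U_u$ supplied by the regularity makes this substitution rigorous, and since $T$ induces an admissible plan this already yields $\Wuct(u,w)\le\WXct(X_u,X_w)$.

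It remains to prove the reverse inequality, i.e.\ that $T$ is optimal. Here I would use that $\c$ is convex and even by \as, so that $s\mapsto\c(s/\tau)$ is convex for every $\tau>0$ and the cost $(x,y)\mapsto\c((x-y)/\tau)$ is a convex function of the difference $x-y$. For such costs the cited result \cite[Thm.~2.17]{Santambrogio} applies: every optimal plan has cyclically monotone support, which in one dimension forces the support to be a non-decreasing set, and this property together with the prescribed marginals identifies the plan with the graph of $T$. I expect this optimality step to be the main obstacle, since it is the only place where the structural assumptions on $\c$ enter, rather than just the elementary change of variables; the strict convexity in \as\ additionally guarantees that the optimal plan is unique. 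In adapting Santambrogio's theorem the only points needing attention are that the rescaling by $\tau$ and the evenness of $\c$ preserve convexity, hence the monotonicity of the cost, after which $\Wuct(u,w)=\WXct(X_u,X_w)$ follows.
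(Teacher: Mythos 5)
Your proposal is correct and follows exactly the route the paper relies on: the paper gives no written proof for this lemma, only the citation of \cite[Thm.~2.17]{Santambrogio}, and that theorem is precisely the one-dimensional statement you reconstruct (the monotone rearrangement $T=X_w\circ U_u$ is an admissible map whose cost equals $\WXct(X_u,X_w)$ after the substitution $\xi=U_u(x)$, and convexity of $s\mapsto\c(s/\tau)$ forces any optimal plan to have monotone support, identifying it with that rearrangement). Your additional care about the regularity hypothesis making $U_u$ bi-Lipschitz, and about evenness and the $\tau$-rescaling preserving convexity, is consistent with what the cited result requires.
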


So instead of gradient flows in the energy landscape of \eqref{def:u_energy_landscape} w.r.t. the optimal transport distance, we will consider the corresponding gradient flows of the inverse distribution functions in the energy landscape of 
\begin{align}
\HX(X):=\int_0^1 \hX(\partial_\xi X(\xi))\de \xi + \int_0^1 \vX(X(\xi))\de \xi
\end{align}
where $\hX(s)=s\hu(1/s)$. This corresponds to \eqref{def:u_energy_landscape} in the sense that $\Hu(u)=\HX(X_u)$ which can be checked easily when using that $X_u$ pushes the measure $\rho$ forward to the Lebesgue measure on $\J$. 

Instead of equation \eqref{PDE_u}, we will consider 
\begin{align}\label{eq:PDE_in_X}
\partial_t X= (c^*)^\prime(\partial_\xi  \dhX(\partial_\xi X) + \dvX(X))
\end{align}
which is the corresponding IDF-version of the \eqref{PDE_u}. The formal correspondence between \eqref{eq:PDE_in_X} and \eqref{PDE_u} can be seen easily, when one uses the push-forward connection between $u$ and $X_u$ which translates, since $X_u$ is differentiable a.e., to $u(x)=(\partial_\xi X_u(U_u(x)))^{-1}$.

\subsection{Discretization}
We will take care of the in-time discretization first. To that end, we will make use of the celebrated JKO-scheme, also known as the minimizing movement scheme (c.f. \cite{JKO1998}). Expressed in our case, we will approximate a solution of the equation with $X:\TJ\rightarrow \I$ by the piecewise constant in time interpolation 
\begin{align*}
X(t,\xi)=\Xn{n}(\xi) \quad \text{ when } t\in ((n-1)\tau,n\tau]\;.
\end{align*}
The sequence $\Xn{n}$ itself is given by the recursion 
\begin{align*}
\Xn{n}\in \argmin \WXct(\cdot, \Xn{n-1})+\HX(\cdot) 
\end{align*}
where the minimization takes place on the set of inverse distribution functions and $\Xn{0}$ is an approximation to the IDF of the initial data. 

The discretization in space will be a simple restriction to piecewise constant IDFs on a uniform grid $(0,\frac{1}{k},\frac{2}{k},\dots,1)$ on $\J$ and the values $x_i$ of such an $X$ on the grid will be encoded in a vector $\bx=(a=x_0,x_1,\dots, x_k=b)$. This way we receive by 
\begin{equation}\label{def:x_to_IDF}
\bx\mapsto X(\xi)= \sum_{i=0}^k x_i \ind{(\frac{i}{k},\frac{i+1}{k})}(\xi)
\end{equation}
a map that recovers $X$ from $\bx$. 

Since an interpretation of the IDF is, that for $\xi_1<\xi_2$ the value $\xi_2-\xi_1$ prescribes how much mass $u$ has on the interval $[X(\xi_1),X(\xi_2)]$, we recover our piecewise constant $u$ from the vector $\bx$ as 
\begin{align}\label{def:x_to_u}
\bx\mapsto u(x)=\sum_{i=0}^{k-1} \frac{1}{k(x_{i+1}-x_i)} \ind{(x_i,x_{i+1})}(x) \;.
\end{align}

Finally we define a replacement for the spatial derivative of $X$, which will be the piecewise constant function of finite forward differences given by the abbreviation $\delta x_i:=\frac{x_{i+1}-x_i}{1/k}$ 
\begin{align*}
\delta \bx = (\delta x_0,\delta x_1,\dots, \delta x_{k-1}) \mapsto \dexi X(\xi) =  \sum_{i=0}^{k-1} x_i \ind{(\frac{i}{k},\frac{i+1}{k})}(\xi)\;.
\end{align*}

A short explanation why we are dealing with piecewise constant IDF instead of the picewise affine linear ones, which would allow for a easier way to deal with the \glqq$\partial_\xi X$\grqq\ appearing in $\HX$, is in order. The main reason lies in the technical difficulties arising in the calculations to follow. The Euler-Lagrange equation will relate the approximate IDF $X$, which would be piecewise affine linear, and its derivatives, possibly of higher order, which would be piecewise constant at best. To bring these different forms together would require an unnecessary amount of technicalities that would all but divert from the results we want to discuss in this paper.

With this spatial discretization, we arrive at our fully discrete functionals that will be subject to minimization in each time-step.

The distance, the integral incorporating the external potential and the internal energy functional will take on the discrete form 
\begin{align*}
\tau \WXdctk(\bx,\bxn{n-1}) + \UXd(\bx) + \VXd(\bx) := \tau \dk \sum_{i=0}^k \c\left(\frac{1}{\tau}(x_i-x_i^{(n-1)})\right) + \dk \sum_{i=0}^{k-1} \hX\left(\delta x_i\right) + \dk\sum_{i=0}^k \vX(x_i)
\end{align*}
where $\UXd(\bx)$ will be $+\infty$ if $x_{i+1}=x_i$ for some $i$. Of course $\HXd(\bx)=\UXd(\bx)+\VXd(\bx)$ is the approximate energy functional applied to our vector $\bx$.


\subsection{Approximate solution and main theorem}
This leaves us with a recursive algorithm to receive a sequence $\bxn{n}$ from which we will recover an approximate solution to our problem \eqref{PDE_u}-\eqref{IniCond_u}. 

\begin{algorithm}
Let $k\in \N$ and $T,\tau_k>0$ such that $T/\tau_k\in \N$. Let $\bxn{0}_k \in \Xc_k(I)$ be some initial data. Then solve for $n=1,2,\dots,T/\tau_k$ the minimization problem 
\begin{align} \label{Minim_X} \begin{split}
\textit{minimize}\quad  & \bx\mapsto \tau_k \WXdctk(\bx,\bxn{n-1})+ \HXd(\bx) \\
\text{ on }\quad & \overline{\Xc_k(I)}=\{\bx\in \R^{k+1}\mid a=x_0\leq x_1\leq x_2\leq\dots\leq x_k=b\} \\
\textit{and set}\quad &\bx_k^{(n)}=\bx_{min} \;.
\end{split}
\end{align}
\end{algorithm}

We will abbreviate the functional that is minimized in each step as $\Phi(\tau_k;\bx^{(n-1)},\bx)$. The algorithm is well defined, as will be shown in the succeeding section. 

This way we receive a sequence $\bxn{n}_k$ of vectors inducing IDFs by \eqref{def:x_to_IDF} and densities by \eqref{def:x_to_u}. Indeed we will not show, that these  approximate densities converge to a solution of our PDE \eqref{PDE_u}-\eqref{IniCond_u} directly, but show that the approximate IDFs define a sequence of functions $X_k:\TJ\rightarrow \I$, converging to a solution of the PDE in terms of IDFs. 

Let us introduce in short the approximate IDFs first. Let $T=N_k\tau_k$. Then the piecewise constant in time and space functions $X_k$ are defined as 
\begin{equation}\label{def:piecewise_const_def}
X_k(t,\xi)=\sum_{n=0}^{N_k}\sum_{i=0}^{k-1} \xn{n}_i \ind{((n-1)\tau_k,n\tau_k]\times(\frac{i}{k},\frac{i+1}{k})}(\xi) \;.
\end{equation}
A more detailed introduction of the IDFs and other auxiliary functions is given in \textit{Definition \ref{def:Piecw_const_interp}}. 

To simplify the expressions appearing in the following claims and proofs, we have already abbreviate the finite forward difference quotient of our vectors $\bxn{n}$ as $\delta \xn{n}_i=\frac{\xn{n}_{i+1}-\xn{n}_{i}}{\dk}$ for $i=0,\dots,k-1$. 

Furthermore we now define the second symmetric difference quotient for a vector $\bx\in \Xc_k(\I)$ as 
\begin{align}\label{def:delta_squared}
\delta^2 \bx=(\delta^2 x_1,\dots,\delta^2 x_{k-1})\quad \text{ where } \quad \delta^2 x_i:=\frac{x_{i+1}-2x_i+x_{i-1}}{(\dk)^2} \quad \text{ for }i=1,\dots,k-1\;.
\end{align}
This way $\delta[\delta x_{i-1}]=\delta^2 x_i$ holds.

Note that we will denote with $p^\prime$ the H\"older conjugate $p^\prime=\frac{1}{1-1/p}$.

\begin{theorem}\label{Thm:Main_Thm}
Let $k\in \N$, $\tau_k>0$ be a sequence monotonically converging to zero and $\c$ a cost function satisfying \as --\aspb. 
Let $\bxn{0}_k$ be a sequence in $\Xc_k(I)$ with 
\begin{enumerate}
\item $X_k^{(0)}$, the IDFs corresponding to $\bxn{0}_k$, converge pointwise to some $X^{(0)}$ 
\item The energy $\HXd(\bxn{0}_k)$ is bounded
\item There are upper and lower bounds $\dxub>\dxlb>0$ such that the forward difference quotient $\delta \bxn{0}_k$ are bound from above and below by $\dxub$ and $\dxlb$ respectively.  
\end{enumerate}
Then the sequence $X_k$, defined by \eqref{def:piecewise_const_def}, has the following properties.
There is an unrelabled subsequence such that 
\begin{enumerate}
\item $X_k$ converges in $L^{p^\prime}(\TJ)$ to $X_\ast\in W^{1,\infty}(\TJoo)\cap C^{0,1/p}(0,T;L^1(\J))$ and additionally $\partial_\xi X_\ast\in L^1(0,T;W^{1,p^\prime}(\Jo))$.
\item The limit $X_\ast$ solves the following weak formulation of  \eqref{eq:PDE_in_X}: 
\begin{align*}
-\int_0^T \int_0^1 X_\ast(t,\xi)\partial_t \varphi(t,\xi)\de \xi \de t = \int_0^T \int_0^1 \dcs\left(\partial_\xi [ \dhX(\partial_\xi X_\ast(t,\xi) )]+\dvX(X_*(t,\xi) \right) \varphi(t,\xi) \de \xi \de t
\end{align*}
holds for all $\varphi\in C_c^\infty(\TJoo)$. 
\item The initial data are assumed continuously $\lim_{t\searrow 0} X_*(t)=X^{(0)}$ in $L^1(\J)$. 
\end{enumerate}
\end{theorem}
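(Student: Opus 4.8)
The plan is to run the standard programme for minimizing movement schemes: extract uniform a~priori estimates from the variational structure, turn them into compactness, derive the discrete Euler--Lagrange equations, and pass to the limit in them. All the a~priori bounds stem from the minimality of $\bxn{n}_k$ in \eqref{Minim_X}. Testing the minimizer against the competitor $\bxn{n-1}_k$ and using $\c(0)=0$ gives the discrete energy--dissipation inequality $\tau_k\WXdctk(\bxn{n}_k,\bxn{n-1}_k)+\HXd(\bxn{n}_k)\le\HXd(\bxn{n-1}_k)$. Telescoping yields monotonicity of $n\mapsto\HXd(\bxn{n}_k)$, hence a uniform energy bound by assumption~(2), together with $\sum_n\tau_k\WXdctk(\bxn{n}_k,\bxn{n-1}_k)\le\HXd(\bxn{0}_k)-\inf\HXd$. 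Since \aspb\ gives $\tfrac{\alpha}{p}\abs{s}^p\le\c(s)$, the dissipation sum controls $\sum_n\tau_k^{1-p}\norm{\Xn{n}_k-\Xn{n-1}_k}_{L^p(\J)}^p$, and a discrete H\"older inequality over the time index converts this into the uniform equicontinuity estimate $\norm{\Xn{n}_k-\Xn{m}_k}_{L^p(\J)}\le C\,\abs{t_n-t_m}^{1/p^\prime}$, the origin of the $C^{0,1/p}(\To;L^1(\J))$ regularity of the limit.

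Next I would record the Euler--Lagrange equations. Because the internal energy $\hX$ forces $\hX(s)\to+\infty$ as $s\searrow0$ for both admissible entropies, the minimizer keeps $\delta\xn{n}_i>0$, so the monotonicity constraints defining $\overline{\Xc_k(I)}$ are inactive and $\partial_{x_j}\Phi=0$ holds at every interior node. Writing this out and inverting $\dc$ gives the pointwise discrete analogue of \eqref{eq:PDE_in_X},
\[
\tfrac{1}{\tau_k}\big(\xn{n}_j-\xn{n-1}_j\big)=\dcs\big(\delta[\dhX(\delta\xn{n}_{j-1})]+\dvX(\xn{n}_j)\big).
\]
From this I extract the two spatial estimates. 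Evaluating the relation at the extremal index of the sequence $i\mapsto\delta\xn{n}_i$ and using monotonicity of $\dhX$ with convexity of $\c$ produces a discrete minimum/maximum principle, i.e.\ two-sided bounds $\dxlb\le\delta\xn{n}_i\le\dxub$ uniform in $n$ and $k$ (possibly depending on $T$ and $\vX$); these give the uniform Lipschitz-in-$\xi$ bound, hence the spatial part of $X_\ast\in W^{1,\infty}(\TJoo)$. Moreover \aspb\ yields $\abs{\dc(s)}\lesssim\abs{s}^{p-1}$ and $(p-1)p^\prime=p$, so the dissipation bound shows that $\dc$ of the discrete velocity, and therefore the flux divergence $\delta[\dhX(\delta\xn{n}_{j-1})]$, is bounded in $L^{p^\prime}(\TJ)$; combined with the bounds on $\delta x$ (on which $\ddhX$ is then bounded above and below) this controls $\delta^2\xn{n}$ and yields $\partial_\xi X_\ast\in L^1(\To;W^{1,p^\prime}(\Jo))$.

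With $X_k\in[a,b]$, the uniform Lipschitz-in-$\xi$ bound and the H\"older-in-time equicontinuity in hand, an Arzel\`{a}--Ascoli / Aubin--Lions argument furnishes a subsequence with $X_k\to X_\ast$ strongly in $L^{p^\prime}(\TJ)$ and $\delta x_k\to\partial_\xi X_\ast$ strongly, while $\delta^2 x_k$ and the discrete velocities converge only weakly in $L^{p^\prime}(\TJ)$. Inserting $\varphi\in C_c^\infty(\TJoo)$ into the discrete Euler--Lagrange equations and summing by parts in time, strong convergence of $X_k$ identifies the left-hand side with $-\int_0^T\int_0^1 X_\ast\,\partial_t\varphi\,\de\xi\,\de t$.

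The main obstacle is the right-hand side, where the continuous but nonlinear map $\dcs$ is applied to the argument $\delta[\dhX(\delta x)]+\dvX(x)$, which converges only weakly: strong convergence of $\delta x_k$ makes $\dhX(\delta x_k)\to\dhX(\partial_\xi X_\ast)$ strong, so its difference quotient converges weakly to $\partial_\xi[\dhX(\partial_\xi X_\ast)]$, and $\dvX(\xn{n}_j)\to\dvX(X_\ast)$ strongly, but this is not enough to commute $\dcs$ with the weak limit. To close the argument I would upgrade the discrete velocities to strong convergence: lower semicontinuity gives $\int_0^T\int_0^1\c(\partial_t X_\ast)\,\de\xi\,\de t\le\liminf_k D_k$, where $D_k:=\sum_n\tau_k\WXdctk(\bxn{n}_k,\bxn{n-1}_k)$ is the total dissipation, and a matching upper bound $\limsup_k D_k\le\int_0^T\int_0^1\c(\partial_t X_\ast)\,\de\xi\,\de t$ — obtained from the discrete energy--dissipation inequality in the spirit of De~Giorgi's variational interpolation for gradient flows — forces convergence of $D_k$ and hence, by strict convexity of $\c$ together with \aspb\ and the uniform convexity of $L^p(\TJ)$, strong $L^p$ convergence of the velocities. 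Continuity of $\dc$ and $\dcs$ then identifies the limit and yields the weak formulation, and the temporal part of the $W^{1,\infty}$ bound follows from the resulting uniform control of the velocities. Finally, the uniform H\"older-in-time estimate together with assumption~(1) gives $\norm{X_\ast(t)-X^{(0)}}_{L^1(\J)}\le\norm{X_\ast(t)-X_k(t)}+C\,t^{1/p^\prime}+\norm{\Xn{0}_k-X^{(0)}}\to0$ as $t\searrow0$, which is the assumed-initial-data statement.
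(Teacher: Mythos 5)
Your overall architecture --- energy--dissipation inequality from minimality, H\"older-in-time estimate, Euler--Lagrange equations, min/max principle for $\delta\bxn{n}$, $L^{p^\prime}$-control of the flux term, and compactness for $X_k$ and $\dexi X_k$ --- coincides with the paper's up to the point where the nonlinearity $\dcs$ must be commuted with a weak limit. (The paper obtains the strong convergence of $\dexi X_k$ via a Rossi--Savar\'e compactness theorem rather than a bare Aubin--Lions citation, but your sketch contains the same two ingredients: spatial compactness from the $L^{p^\prime}$ bound on $\delta[\dhX(\delta\bxn{n})]$ and temporal equicontinuity inherited from $X_k$.)

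The genuine gap is in your identification of the nonlinear limit. The paper resolves this with a Browder--Minty monotonicity argument: it integrates the inequality $0\le(\dcs(\Ak)-\dcs(\As-\varepsilon\psi))(\Ak-(\As-\varepsilon\psi))$, and the hard step is showing $\limsup_k\int_0^T\int_0^1\CdelH_k\,\Ak\le\int_0^T\int_0^1\CdelHs\,\As$, which is done by substituting the Euler--Lagrange equation, summing by parts in $i$ and $n$, and exploiting convexity of $\hX$. You instead propose to upgrade the discrete velocities to strong $L^p$ convergence by proving convergence of the total dissipation $D_k$, and the entire burden then falls on the asserted matching upper bound $\limsup_k D_k\le\int_0^T\int_0^1\c(\partial_t X_*)$. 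This does not follow from the discrete energy--dissipation inequality as you state it: that inequality only gives $\limsup_k D_k\le\HXd(\bxn{0})-\liminf_k\HXd(\bxn{N_k})$, and converting the right-hand side into $\int\c(\partial_t X_*)$ requires (i) the \emph{doubled} dissipation (De Giorgi's interpolation controls $\c$ of the velocity \emph{plus} $\cs$ of the discrete slope, which is precisely what the paper's quantity $\ct(s)=s\dc(s)=\c(s)+\cs(\dc(s))$ encodes), (ii) lower semicontinuity of the slope term along the only weakly convergent $\Ak$, and (iii) a chain rule for $t\mapsto\HX(X_*(t))$ along the limit curve, which at the available regularity is essentially equivalent to the summation-by-parts/convexity computation the paper performs in its identification step. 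Without these the upper bound is circular --- it presupposes that $X_*$ already satisfies the energy identity of the limit equation. The EDI route you gesture at is viable in principle, but as written the central step of the theorem is asserted rather than proved.
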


As already announced, if we assume more regularity of the initial data, the above claim also holds for the flux-limiting cost. 

\begin{theorem}\label{Thm:Scnd_Thm}
Let the prerequisites of \textit{Theorem \ref{Thm:Main_Thm}} hold, $\vu=\mathrm{const.}$ and $\c$ is not $p$-Wasserstein-like but flux-limiting. Additionally we assume finite bounds $\ddxub,\ddxlb$, such that the symmetric second difference quotient $\delta^2 \bxn{0}_k$ are bound from above and below by $\ddxub$ and $\ddxlb$ respectively. Then convergence to a weak solution of \eqref{eq:PDE_in_X} holds in the sense of \textit{Theorem \ref{Thm:Main_Thm}} with $p=2$ and the initial data are again assumed continuously. 
\end{theorem}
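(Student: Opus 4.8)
The plan is to re-run the proof of \emph{Theorem \ref{Thm:Main_Thm}} with $p=2$ (fixed by \asfd) and to isolate precisely the one place where the global power-growth structure \aspa--\aspb\ of $\c$ was essential, replacing it by an argument tailored to \asfa--\asfd. The key structural observation is that, although $\c$ now has compact domain $[-\ls,\ls]$ and $\dc$ blows up at $\pm\ls$ by \asfc, the Legendre dual is well behaved: since $\dc:(-\ls,\ls)\to\R$ is a strictly increasing bijection, $\dcs=(\dc)^{-1}$ maps $\R$ onto $(-\ls,\ls)$, so $\abs{\dcs}\le\ls$ (this \emph{is} the flux limitation), $\dcs$ is continuous and strictly increasing, and \asfd\ forces $\ddc(0)>0$, so $\dcs$ is Lipschitz near the origin. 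Consequently every estimate in \emph{Theorem \ref{Thm:Main_Thm}} resting only on convexity of $\c$ and on the $L^2$-comparison $\tfrac\alpha2\abs{s}^2\le\c(s)\le\tfrac\beta2\abs{s}^2$ on the domain---energy monotonicity of the JKO steps and the $L^2$-type Wasserstein bounds---transfers verbatim. The single genuinely new difficulty is that, in the $p$-Wasserstein case, the energy dissipation together with the power growth of $\c$ controlled the \emph{force} $\partial_\xi[\dhX(\partial_\xi X)]$, the argument of $\dcs$, in $L^{p^\prime}$; for a flux-limiting cost this is lost, since the dissipation bounds only the velocity $\tfrac1\tau(x_i-x_i^{(n-1)})$ while $\dc$ blows up at $\pm\ls$ and cannot transfer an $L^2$-velocity bound into any bound on the force. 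Recovering control of the force is exactly what the extra second-difference hypothesis is for.

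First I would settle well-posedness and the estimates that carry over. Existence of a minimizer of $\Phi(\tau_k;\bxn{n-1},\cdot)$ follows from lower semicontinuity and coercivity as before, the only change being that the effective feasible set is now $\{\bx\in\overline{\Xc_k(I)}:\abs{x_i-x_i^{(n-1)}}\le\tau_k\ls\}$. The Euler--Lagrange relation at interior nodes, with $\vu=\mathrm{const.}$ so that the potential term drops out, reads $\tfrac1{\tau_k}(x_i^{(n)}-x_i^{(n-1)})=\dcs\big(\delta[\dhX(\delta\bxn{n})]_{i-1}\big)$; since $\abs{\dcs}\le\ls$ this yields the nodewise displacement bound $\abs{x_i^{(n)}-x_i^{(n-1)}}\le\tau_k\ls$ and hence a uniform \emph{Lipschitz}-in-time bound $\norm{\partial_t X_k}_{L^\infty}\le\ls$, stronger than the $C^{0,1/p}$ regularity of the main theorem. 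The minimum/maximum principle propagating the first-difference bounds $\dxlb\le\delta\xn{n}_i\le\dxub$ is order-theoretic and does not see the change of cost, so it transfers directly and keeps $\partial_\xi X_k$ bounded above and away from zero, uniformly in $n$ and $k$.

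The technical heart is the propagation of the second-difference bound. Writing the force as $\delta[\dhX(\delta\bxn{n})]_{i-1}=\ddhX(\zeta_i)\,\delta^2x_i^{(n)}$ for some intermediate value $\zeta_i$ (by the discrete mean value theorem, using $\delta x_i-\delta x_{i-1}=\tfrac1k\delta^2x_i$) and observing that $\ddhX$ is bounded on the range $[\dxlb,\dxub]$ of admissible first differences, one sees that a uniform bound on $\delta^2\bxn{n}$ is equivalent to a uniform bound on the argument of $\dcs$. Starting from the hypothesis $\ddxlb\le\delta^2x_i^{(0)}\le\ddxub$, I would show by a discrete maximum-principle argument---differencing the Euler--Lagrange relation once more in $i$ and exploiting monotonicity of $\dcs$ together with convexity of $\hX$---that $\delta^2\bxn{n}$ remains bounded, uniformly in $n$ and $k$, for all $t\in\T$. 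This step is the main obstacle I expect: the flux-limiting regime is precisely where the force may grow large while $\dcs$ saturates at $\pm\ls$, so one must verify that the saturation of $\dcs$ does not break the comparison structure keeping the second differences controlled, and that no spurious oscillations develop between nodes.

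With a uniform force bound in hand, the remaining arguments mirror \emph{Theorem \ref{Thm:Main_Thm}}. The time-Lipschitz bound and the uniform first- and second-difference bounds give, via Arzel\`a--Ascoli, a subsequence with $X_k\to X_\ast$ in $L^{p^\prime}(\TJ)$ and, crucially, strong convergence of $\partial_\xi X_k$, whence $\dhX(\partial_\xi X_k)\to\dhX(\partial_\xi X_\ast)$ and the forces converge to $\partial_\xi[\dhX(\partial_\xi X_\ast)]$; continuity and boundedness of $\dcs$ then identify, by dominated convergence, $\dcs(\delta[\dhX(\delta\bxn{n})])\to\dcs(\partial_\xi[\dhX(\partial_\xi X_\ast)])$. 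Inserting this into the discrete weak formulation and letting $k\to\infty$ yields the weak form of \eqref{eq:PDE_in_X} with $\dvX\equiv0$, while the uniform Lipschitz-in-time bound gives $\lim_{t\searrow0}X_\ast(t)=X^{(0)}$ in $L^1(\J)$. This establishes the conclusion exactly in the sense of \emph{Theorem \ref{Thm:Main_Thm}} with $p=2$.
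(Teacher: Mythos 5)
Your identification of the key new ingredient is exactly the paper's: a second discrete maximum/minimum principle obtained by taking the second symmetric difference of the Euler--Lagrange relation and exploiting monotonicity of $\dcs$ and of $\dhX$ (this is \textit{Lemma \ref{lem:min_max_2}}), which propagates the bounds on $\delta^2\bxn{0}_k$ to all $\delta^2\bxn{n}_k$, hence bounds the force $\Aink$ uniformly (via $\ddhX$ evaluated on $[\dxlb,\dxub]$) and confines the velocities $\dcs(\Aink)$ to a compact interval $[\underline{C},\overline{C}]\subset(-\ls,\ls)$. Up to that point your outline matches the paper, including the preliminary fact that the minimizer stays in the open set $\abs{x_i-\xn{n-1}_i}<\ls\tau$ so that the Euler--Lagrange equation is available at all.

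The gap is in your endgame. You pass to the limit by claiming that ``the forces converge to $\partial_\xi[\dhX(\partial_\xi X_\ast)]$'' and then applying dominated convergence to $\dcs$ of the forces. But strong convergence of $\dexi X_k$ (hence of $\dhX(\dexi X_k)$) only gives \emph{weak} convergence of the difference quotients $\delta[\dhX(\delta\bxn{n})]$ to $\partial_\xi[\dhX(\partial_\xi X_\ast)]$; the uniform $L^\infty$ bound from the second-difference principle upgrades this to weak-$\ast$ at best, not to pointwise a.e.\ convergence. Dominated convergence therefore does not apply, and since $\dcs$ is nonlinear, $\dcs(\Ak)\rightharpoonup\CdelHs$ does not identify $\CdelHs$ with $\dcs(\As)$. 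This is precisely the obstruction that the Browder--Minty monotonicity argument in the paper is designed to overcome, and your route skips it without a substitute. The paper avoids redoing that argument in the flux-limiting setting by a different device: once the velocities are known to lie in $[\underline{C},\overline{C}]\Subset(-\ls,\ls)$, it \emph{regularizes} $\c$ outside a slightly larger compact interval so that the modified cost satisfies \aspa--\aspb\ with $p=2$ without changing any minimizer, and then invokes \textit{Theorem \ref{Thm:Main_Thm}} wholesale --- including its Browder--Minty identification step. To repair your proof you should either adopt that regularization shortcut or run the monotonicity trick of the paper's Section 4 directly for the flux-limiting cost (which works, since $\dcs$ is still monotone and now globally bounded); establishing genuinely strong convergence of the forces would require additional compactness you have not provided.
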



\subsection{Related results from the literature} The idea to use a Lagrangian scheme for a problem of the form \eqref{PDE_u} can be traced back to papers by MacCamy and Socolovski \cite{MacCamy1985Anumerical}, where $m=2$ and $\vu=0$ and Russo \cite{russo1990deterministic}, where $\hu(s)=s$ and again $\vu=0$. The former paper presents a discretization for the densities $u$ that is very similar to ours, the latter comparing Lagrangian discretizations and discussing generalizations to the two-dimensional case. In \cite{budd1999self} a moving mesh is used to capture numerically self similar solutions of the porous medium equation. In \cite{carrillo2009numerical} aggregation equations are studied in terms of a Lagrangian scheme basing on evolving diffeomorphisms.

The gradient flow structure of our equation was investigated by Agueh for $p$-Wasserstein cost \cite{Agueh} and by McCann and Puel for flux-limiting cost \cite{McCannPuel} both showing convergence of the minimizing movement scheme to a solution of the equation \eqref{PDE_u}. 

The connection between Lagrangian schemes and the gradient ﬂow structure was investigated by Kinderlehrer and Walkington \cite{kinderlehrer1999approximation} and in a series of unpublished theses \cite{roessler2004discretizing}, \cite{leven2002gradientenfluss}. Westdickenberg and Wilkening obtain in \cite{westdickenberg2010variational} a similar scheme as a by-product in the process of designing a structure preserving discretization for the Euler equations. Burger et al \cite{burger2009mixed} devise a numerical scheme in dimension two on basis of the gradient ﬂow structure, using the dynamic formulation of the Wasserstein distance \cite{benamou2000computational} instead of the Lagrangian approach. The Lagrangian approach was adapted to fourth order equations, namely by Cavalli and Naldi \cite{cavalli2010wasserstein} for the Hele-Shaw ﬂow, and by Düring et al \cite{during2010gradient} for the DLSS equation.

The spatial discretization we will be analyzing was proposed by Laurent Gosse and Giuseppe Toscani and analysed in \cite{gosse2006identification}. It is worth noting that therein the approach taken in our paper, to solve the equation in terms of IDFs, was taken as well, as was in \cite{DanielBeni} to analyse a aggregation-advection-diffusion equation. In \cite{DanielHorst} convergence  for a family of energies including a potential energy and more general $\hu$ in the special case quadratic Wasserstein distance. 

We want to mention another numerical scheme close to the one examined in this paper, but with the possibility for generalization to higher dimensions. Instead of decomposing the transport map $G$, we could have discretized the diffeomorphism $G$ itself. This approach was recently thoroughly investigated by Carillo et al \cite{CarilloMatthes}. 

\subsection{Outline}
Proving \textit{Theorem \ref{Thm:Main_Thm}} will basically consist of three steps. 

First, we will show some properties about the sequence of minimizers $\bxn{n}_k$: it is well defined, it satisfies a maximum/minimum principle, we will give a discrete Euler-Lagrange equation corresponding to the minimization problem
and finally we will establish a priori estimates 
which gives us control over the right hand side of the Euler-Lagrange equation. 

Then we will prove the main convergence results, especially the strong convergence of $ \dexi X_k$, the weak compactness of the sequence of functions $\CdelH_k$ corresponding to the r.h.s. of the Euler-Lagrange equation and consequently of $\detau X_k$ in $L^p(\TJ)$. 

Another convergence result that we have to established is the limit of the Euler-Lagrange equation in terms of functions. However, since $\dcs$ is not linear, we cannot identify the limit of the r.h.s. $\CdelH_k$ right away to receive the equation that is satisfied by $X_*$. 

This final step of identifying the limit will be done by using a monotonicity argument, the so called Browder Minty trick. After the identification of the non-linear limit, we have shown the convergence of our sequence $X_k$ to a limit $X_\ast$ that satisfies the weak formulation of the PDE in terms of IDF. We will combine this with the in-time regularity of $X_*$, which is H\"older-continuous on $\T$ for $\alpha=1/p$, to receive continuity at $t=0$. 

Concerning \textit{Theorem \ref{Thm:Scnd_Thm}}, the main reason we can apply the arguments for the $p$-Wasserstein cost to the flux-limiting cost is a second maximum-/minimum principle, this time for $\delta^2 \bxn{n}$, which shows that if we start with initial data that are regular enough, then our minimization problem in some sense does not see the discontinuity of our cost and the cost can be treated as $2$-Wasserstein like. We will note the key parts of the proofs where a difference has to be made if one is working with flux-limiting cost. 

In the last part of the paper, we give some numerical examples to illustrate the dynamics of the $p$-Wasserstein diffusion and flux-limiting diffusion. Additionally there will be an approximate solution to a parabolic $q$-Laplace equation and some numerical convergence analysis.

\section{Properties of the minimization problem}
In this section we want to lay the foundation for the succeeding section, which will show the crucial convergences that are needed for the main theorem. First of all, we will show that the sequences $\bxn{n}$ are indeed well defined. This is followed by the Euler-Lagrange equation for the minimization. Additionally we will show two estimates.

\subsection{Existence and uniqueness}

\begin{lemma}
The minimization problem \eqref{Minim_X} has a unique minimizer $\bx=(x_0,x_1,\dots,x_k)\in \Xc_k(\I)$.
\end{lemma}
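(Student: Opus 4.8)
The plan is to use the direct method of the calculus of variations: secure existence of a minimizer via lower semicontinuity (in fact continuity on the relevant set) together with compactness, and then obtain uniqueness from strict convexity. Throughout I abbreviate $\Phi(\bx):=\tau_k\WXdctk(\bx,\bxn{n-1})+\HXd(\bx)$ and record that the admissible set $\overline{\Xc_k(\I)}=\{\bx\in\R^{k+1}\mid a=x_0\leq\dots\leq x_k=b\}$ is a compact, convex subset of $\R^{k+1}$, parametrized by the free coordinates $x_1,\dots,x_{k-1}$.

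For existence, I would first check that $\Phi$ is bounded below on $\overline{\Xc_k(\I)}$. The cost term is nonnegative by \as, the potential term $\VXd$ is continuous on the compact set $\I^{k+1}$ and hence bounded, and for the internal energy one exploits the length constraint $\sum_{i=0}^{k-1}\delta x_i=k(b-a)$, which confines every $\delta x_i$ to the bounded interval $[0,k(b-a)]$. On this interval $\hX$ — equal to $1-\log s$ in the Boltzmann case and to $\tfrac{1}{m-1}s^{1-m}$ in the R\'enyi case — is bounded below. The decisive structural feature is the barrier behaviour $\hX(s)\to+\infty$ as $s\to0^{+}$, valid in both cases. Fixing the equidistant reference vector $\bx_0$ (which has finite energy $\Phi(\bx_0)=:C_0$) and passing to the sublevel set $S=\{\bx\in\overline{\Xc_k(\I)}\mid\Phi(\bx)\leq C_0\}$, the inequality $\UXd(\bx)\leq C_0-\min\VXd$ holding on $S$ forces each $\hX(\delta x_i)$ to remain bounded above, and hence — by the barrier property — each $\delta x_i$ to remain bounded below by a uniform $\varepsilon_0>0$. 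Thus $S$ lies in a compact subset of the open set $\Xc_k(\I)$ on which $\Phi$ is continuous, so Weierstrass' theorem produces a minimizer; it automatically satisfies the strict inequalities $x_0<x_1<\dots<x_k$ and therefore lies in $\Xc_k(\I)$.

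For uniqueness I would show that $\Phi$ is strictly convex on the convex set $\overline{\Xc_k(\I)}$. The map $\bx\mapsto\delta x_i=k(x_{i+1}-x_i)$ is affine and $\hX$ is convex (one checks $\hX^\pprime>0$ in both the Boltzmann and the R\'enyi case), so $\UXd$ is convex; $\VXd$ is convex because the external potential $\vu$ is assumed convex; and each summand $\c\!\left(\tfrac{1}{\tau_k}(x_i-x_i^{(n-1)})\right)$ of the cost term is a strictly convex function of the single variable $x_i$ by \as. Since distinct free coordinates enter distinct summands, the cost term is strictly convex jointly in $(x_1,\dots,x_{k-1})$, and adding a strictly convex function to convex ones yields a strictly convex $\Phi$. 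A strictly convex functional admits at most one minimizer on a convex domain, which together with the existence above establishes the claim.

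The only genuinely delicate point is the behaviour at the boundary of $\overline{\Xc_k(\I)}$, where the density $(\delta x_i)^{-1}$ blows up and the internal energy is $+\infty$: the whole argument hinges on recognizing $\UXd$ as a barrier that simultaneously keeps the infimum finite and repels minimizers from the collapsed configurations $x_{i+1}=x_i$. Everything else — boundedness from below via the length constraint and the convexity computations — is routine once the explicit forms of $\hX$, together with \as and the convexity of $\vu$, are in hand.
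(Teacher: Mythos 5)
Your proposal is correct and follows essentially the same route as the paper: existence via compactness of $\overline{\Xc_k(\I)}$ and lower semicontinuity (with a finite-valued competitor), the barrier property $\hX(s)\to+\infty$ as $s\searrow 0$ to push the minimizer off the collapsed configurations $x_{i+1}=x_i$ and into the open set $\Xc_k(\I)$, and strict convexity of $\Phi$ for uniqueness. The only cosmetic differences are your choice of the equidistant vector rather than $\bxn{n-1}$ as the finite-energy competitor and your slightly more explicit justification of joint strict convexity of the cost term.
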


\begin{proof}
The set $\Xc_k(\I)$ is bounded in $\R^{k+1}$ and $\Phi(\tau;\bx^{(n-1)},\bx)$ is lower semi-continuous in $\bx$. $\bx=\bx^{(n-1)}$ generates a finite value in $\Phi$, so there exists a minimizer in $\overline{\Xc_k(\I)}$ with finite value. Additionally, the functional is strictly convex, since both summands are and $\tau>0$, so the minimizer is unique.
Note that $\hX$ is monotonously decreasing. This yields the following inequalities with $\inf_{\bx\in \Xc_k(\I)} \VXd(\bx)=:\VXdb>-\infty$ (which holds by properties of $\Xc_k(\I)$ and $\vX$) for $j=0,\dots,k-1$:
\begin{align}
\begin{split}
C\geq &\tau_k \WXdctk(\bx_{min},\bx_k^{(n-1)})+ \HXd(\bx_{min}) \\
&\geq \dk \sum_{i=0}^{k-1}\hX(\delta \underline{x}_i)+ \VXdb \\
&\geq \dk \hX(\delta \underline{x}_j) + \dk(k-1)\hX\left(\frac{b-a}{d_k}\right)+\VXdb\;.
\end{split}
\end{align}
Therefore $\hX\left(\frac{\underline{x}_{j+1}-\underline{x}_j}{\dk}\right)$ in $\HXd$ is bound from above for every $j=0,\dots,k$. Finally, by $\lim_{s\searrow 0} \hX(s)=\infty$ we conclude $\underline{x}_{j+1}-\underline{x}_j\geq\varepsilon$ for some $\varepsilon>0$ which shows $\bx_{min}\in \Xc_k(\I)$. 
\end{proof}

Now that we know that the minimization is well defined, we can establish the following result.
\begin{corollary}\label{cor:sequence_descends_in_H}
Let $\bxn{0}\in \Xc_k(\I)$ and $\bxn{n}$ with $n=1,2,\dots$ the sequence of minimizer recursively defined by \eqref{Minim_X}. Then 
\begin{align}
\WXdctk(\bxn{n},\bxn{n-1})\leq \HXd(\bxn{n-1})-\HXd(\bxn{n})
\end{align}
holds for every $n=1,2,\dots$.
\end{corollary}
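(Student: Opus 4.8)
The plan is to extract the estimate directly from the minimality that defines $\bxn{n}$. By the \emph{Algorithm}, $\bxn{n}$ is a global minimizer of $\bx \mapsto \Phi(\tau_k;\bxn{n-1},\bx) = \tau_k \WXdctk(\bx,\bxn{n-1}) + \HXd(\bx)$ over $\overline{\Xc_k(\I)}$, so its minimality supplies a variational inequality against every admissible competitor $\bx$:
\[
\tau_k \WXdctk(\bxn{n},\bxn{n-1}) + \HXd(\bxn{n}) \;\leq\; \tau_k \WXdctk(\bx,\bxn{n-1}) + \HXd(\bx).
\]
The only real choice in the proof is which competitor to insert, and the natural one is the previous iterate $\bx = \bxn{n-1}$ itself. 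This is admissible because the existence-and-uniqueness Lemma just established guarantees $\bxn{n-1} \in \Xc_k(\I) \subset \overline{\Xc_k(\I)}$ (and $\bxn{0} \in \Xc_k(\I)$ by hypothesis), and it produces a finite value of $\Phi$.

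With this choice the transport term on the right collapses: by the definition of $\WXdctk$ one has
\[
\WXdctk(\bxn{n-1},\bxn{n-1}) = \dk \sum_{i=0}^{k} \c\!\left(\tfrac{1}{\tau_k}(x_i^{(n-1)} - x_i^{(n-1)})\right) = \dk \sum_{i=0}^{k} \c(0) = 0,
\]
using $\c(0)=0$ from \as. Substituting reduces the right-hand side to $\HXd(\bxn{n-1})$, and rearranging the resulting inequality
\[
\tau_k \WXdctk(\bxn{n},\bxn{n-1}) + \HXd(\bxn{n}) \;\leq\; \HXd(\bxn{n-1})
\]
gives the asserted dissipation estimate $\tau_k \WXdctk(\bxn{n},\bxn{n-1}) \leq \HXd(\bxn{n-1}) - \HXd(\bxn{n})$.

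I do not expect any genuine obstacle: this is the standard one-step energy-dissipation inequality of the minimizing movement scheme, and its whole content is the competitor choice combined with $\c(0)=0$. The only points that deserve a line of care are the admissibility of $\bxn{n-1}$ as a competitor and the finiteness of the functional values involved, both of which are already covered by the preceding Lemma. As a forward-looking remark, telescoping this inequality over $n$ will control $\sum_n \tau_k \WXdctk(\bxn{n},\bxn{n-1})$ by $\HXd(\bxn{0})$ and exhibit $n \mapsto \HXd(\bxn{n})$ as non-increasing, which is exactly the input the a priori estimates of the next subsection require.
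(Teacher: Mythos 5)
Your proof is correct and is exactly the paper's argument: insert the feasible competitor $\bx=\bxn{n-1}$, use $\WXdctk(\bxn{n-1},\bxn{n-1})=0$ (from $\c(0)=0$), and rearrange. Note only that your derivation naturally yields the inequality with the factor $\tau_k$ in front of $\WXdctk(\bxn{n},\bxn{n-1})$, which is the form actually used later (e.g.\ in the a priori estimates); the corollary as stated in the paper omits this factor, apparently a typo.
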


\begin{proof}
Plugging in the feasible $\bx=\bxn{n-1}$ in the minimization problem combining it with $\bxn{n}$ being a minimizer and with $\WXdctk(\bxn{n-1},\bxn{n-1})=0$ yields the result after rearranging.
\end{proof}

\subsection{The Euler-Lagrange equation}
Formulating the Euler-Lagrange equation n the case of $p$-Wasserstein cost is straight forward. In the case of flux-limiting cost, however, we have to make sure the minimization problem in some sense does not see the discontinuity.

\begin{lemma}
In the case of flux-limiting cost, the minimizer $\bxn{n}$ lies in 
\[\left\{\bx \middle| \bx\in \Xc_k(\I), \abs{x_j-\xn{n-1}_j}<\ls\tau\text{ for }j=0,\dots,k\right\}\;.\] 
\end{lemma}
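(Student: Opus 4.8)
The plan is to argue by contradiction, exploiting the blow-up of $\dc$ at the boundary of the effective domain of $\c$. First I would record that, by \asfa, the $j$-th summand of $\WXdctk(\bx,\bxn{n-1})$ is finite exactly when $\abs{x_j-\xn{n-1}_j}\le\ls\tau$; hence $\Phi(\tau;\bxn{n-1},\cdot)$ is $+\infty$ outside the compact convex set
\[
K=\left\{\bx\in\overline{\Xc_k(\I)}\;\middle|\;\abs{x_j-\xn{n-1}_j}\le\ls\tau\ \text{for }j=0,\dots,k\right\},
\]
so the minimizer (whose existence is granted by the preceding lemma) automatically satisfies the \emph{weak} inequalities, and the content of the claim is to upgrade these to strict ones. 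Since $x_0=a=\xn{n-1}_0$ and $x_k=b=\xn{n-1}_k$ are fixed, the constraint is trivially strict at the endpoints, so only interior indices $j\in\{1,\dots,k-1\}$ must be treated. I would also recall from the existence lemma that the minimizer has strictly positive gaps $x_{j+1}-x_j\ge\varepsilon>0$, a fact I will use to keep the energy term well behaved under perturbation.

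Next I suppose, for contradiction, that equality holds at some interior $j$, say $x_j-\xn{n-1}_j=\ls\tau$ (the case $-\ls\tau$ being symmetric, with the perturbation direction reversed). I perturb inward, $\bx^\epsilon:=\bx-\epsilon e_j$ with small $\epsilon>0$, moving $x_j$ back toward $\xn{n-1}_j$. For $\epsilon<\varepsilon$ the ordering is preserved and $\bx^\epsilon$ still lies in $K$, so it is admissible. Comparing $\Phi(\tau;\bxn{n-1},\bx^\epsilon)$ with $\Phi(\tau;\bxn{n-1},\bx)$ term by term, only the $j$-th distance summand, the two energy summands $\hX(\delta x_{j-1}),\hX(\delta x_j)$, and the potential summand $\vX(x_j)$ change.

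The heart of the argument is that the distance summand decreases at an unbounded rate, while the remaining terms vary only boundedly. Writing the change of the $j$-th distance summand as
\[
\tau\dk\left[\c\!\left(\ls-\tfrac{\epsilon}{\tau}\right)-\c(\ls)\right]=-\tau\dk\int_{\ls-\epsilon/\tau}^{\ls}\dc(s)\de s,
\]
assumption \asfc\ gives $\dc(s)\to+\infty$ as $s\to\ls^-$; hence for any $M>0$ there is $\delta>0$ with $\dc>M$ on $(\ls-\delta,\ls)$, so once $\epsilon/\tau<\delta$ this summand drops by at least $\dk M\epsilon$. On the other hand, because the gaps stay bounded away from zero the arguments $\delta x_{j-1},\delta x_j$ remain in a compact set on which $\hX\in C^1$, and $\vX\in C^2$ since $\vu\in C^2(\I)$; thus the combined change of the energy and potential summands is bounded in absolute value by $L\epsilon$ for a constant $L$ independent of $\epsilon$. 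Choosing $M>kL$ makes the total change strictly negative for all small $\epsilon>0$, contradicting minimality of $\bx$. This forces $\abs{x_j-\xn{n-1}_j}<\ls\tau$ at every interior $j$, which is the assertion.

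The main obstacle is turning the ``infinite marginal cost'' heuristic into a clean inequality: one must guarantee that the inward perturbation stays admissible (ordering preserved, still inside $K$) and, crucially, that the competing internal energy does not itself develop an unbounded derivative — which would happen only if two neighbouring nodes collided. Both issues are settled by the strict positivity of the gaps from the existence lemma, after which the quantitative form of \asfc\ (the lower bound $\dc>M$ near $\ls$) dominates the Lipschitz variation $L\epsilon$ of the remaining terms and closes the contradiction.
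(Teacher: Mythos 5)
Your proof is correct, and it rests on the same core mechanism as the paper's: an admissible inward perturbation toward $\bxn{n-1}$ makes the transport cost drop at an unbounded marginal rate (by \asfc) while the internal energy and potential terms vary only at a bounded rate, because the strict gaps $x_{j+1}-x_j\ge\varepsilon>0$ guaranteed by the existence lemma keep $\hX$ in a region where it is $C^1$. The execution differs, though. The paper perturbs \emph{all} tight indices simultaneously by a partial convex combination $(1-\varepsilon)\xn{n}_i+\varepsilon\xn{n-1}_i$ on the tight set $P$, and then uses the gradient inequality for the strictly convex $\c$, $\hX$, $\vX$ to bound $\Phi(\bxn{n})-\Phi(\bx^\varepsilon)$ from below; this forces the bookkeeping of the boundary index sets $P^+$, $P^-$ to track which forward differences $\delta x_i$ are affected. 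You instead perturb a single coordinate $x_j\mapsto x_j-\epsilon$, represent the cost decrease as $\tau\dk\int_{\ls-\epsilon/\tau}^{\ls}\dc(s)\,\mathrm{d}s\ge \dk M\epsilon$ with $M$ arbitrarily large, and dominate an explicit Lipschitz bound $L\epsilon$ on the remaining terms via $M>kL$. Your version is more elementary and quantitative, and since the existence of even one tight index already yields the contradiction, nothing is lost by treating indices one at a time; the paper's simultaneous perturbation buys nothing extra here beyond handling the tight set in one stroke. One small point worth making explicit in your write-up: the Lipschitz constant $L$ depends on $k$ (through the range $[k\varepsilon/2,\,k(b-a)]$ of the difference quotients $\delta x_{j-1},\delta x_j$), but this is harmless because $k$ is fixed throughout the lemma and $M$ may be taken arbitrarily large.
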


\begin{proof}
Let $P:=\left\{i\in \{0,\dots,k\}\mid \abs{\xn{n}_j-\xn{n-1}_j}=\ls\tau\right\}$ and w.l.o.g. $\abs{\xn{n}_j-\xn{n-1}_j}\leq\ls\tau$. Define a partial convex combination of $\bxn{n}$ and $\bxn{n-1}$ w.r.t. $P$ as 
\[\bx^\varepsilon:=\begin{cases}
(1-\varepsilon) \xn{n}_i+\varepsilon\xn{n-1}_i&\text{ if }i\in P \\
\xn{n}_i &\text{ if }i\notin P\;.
\end{cases} \]
We will now show that if $P\neq \emptyset$, then for a suitable $\varepsilon$ the partial convex combination $\bx^\varepsilon$ is a feasible candidate with $\Phi(\tau,\bxn{n-1},\bx^\varepsilon)<\Phi(\tau,\bxn{n-1},\bxn{n})$ contradicting $\bxn{n}$ being a minimizer in the first place.

We note that $\bx^\varepsilon \in \Xc_k(\I)$ for $\varepsilon$ small enough, since $\bxn{n}\in \Xc_k(\I)$ and $\Xc_k(\I)$ is open w.r.t. $\{a\}\times \R^{k-1}\times \{b\}$. 

Furthermore recall $\dc(s)\rightarrow \pm \infty$ for $s\xrightarrow{s\in (-1,1)} \pm 1$. Let  
\[ P^+:=\{i\in \{0,\dots,k\}\mid i\notin P, i+1\in P\} \quad \text{ and }\quad P^-:=\{i\in \{0,\dots,k\}\mid i\in P, i+1\notin P\}\;. \]
Recall $c,\hX$ and $\vX$ are strictly convex $C^1$-functions, which allows us to calculate
\begin{align*}
\Phi(\tau;&\bxn{n-1},\bxn{n})-\Phi(\tau;\bxn{n-1},\bx^\varepsilon) \\
 &=\dk \sum_{i=0}^k \c\left(\frac{1}{\tau}(x_i^{(n)}-x_i^{(n-1)})\right)-\c\left(\frac{1}{\tau}(x_i^\varepsilon-x_i^{(n-1)})\right)\\ 
 &\quad + \dk \sum_{i=0}^{k-1} \hX\left(\frac{\xn{n}_{i+1}-\xn{n}_i}{\dk}\right)-\hX\left(\frac{x^\varepsilon_{i+1}-x^\varepsilon_i}{\dk}\right)  \\
 &>  \varepsilon\dk \Bigg(\sum_{i\in P} \c^\prime\left(\frac{1}{\tau}(x_i^\varepsilon-x_i^{(n-1)})\right) \frac{1}{\tau}\left(\xn{n}_i-\xn{n-1}_i\right) + \dk \sum_{i\in P\setminus P^-} \dhX\left(\frac{x^\varepsilon_{i+1}-x^\varepsilon_i}{\dk}\right) \left(\delta \xn{n}_i-\delta \xn{n-1}_i\right) \\
  &\quad  + \dk \sum_{i\in P^-} \dhX\left(\frac{\xn{n}_{i+1}-x^\varepsilon_i}{\dk}\right) k \left( \xn{n-1}_i- \xn{n}_i\right) + \dk \sum_{i\in P^+} \dhX\left(\frac{x_{i+1}^\varepsilon - \xn{n}_i}{\dk}\right) k \left( \xn{n}_i- \xn{n-1}_i\right)  \Bigg)\;.
\end{align*}
By continuity, the expressions $\dhX\left(\frac{x^\varepsilon_{i+1}-x^\varepsilon_i}{\dk}\right)$, $\dhX\left(\frac{\xn{n}_{i+1}-x^\varepsilon_i}{\dk}\right)$ $\dhX\left(\frac{x_{i+1}^\varepsilon - \xn{n}_i}{\dk}\right)$ and $\dhX\left(\frac{x_{i+1}^\varepsilon - \xn{n}_i}{\dk}\right)$ are bounded in $\varepsilon$ for $\varepsilon\searrow 0$. On the other hand, $\dc\left(\frac{1}{\tau}(x_i^\varepsilon-x_i^{(n-1)})\right) \left(\xn{n}_i-\xn{n-1}_i\right)\rightarrow \infty$ for $\varepsilon\searrow 0$. So we know that, for $\varepsilon$ small enough, $\Phi(\tau;\bxn{n-1},\bxn{n})-\Phi(\tau;\bxn{n-1},\bx^\varepsilon)$ will be positive, which is our sought for contradiction.
\end{proof}

\begin{lemma}[Euler-Lagrange equation]\label{lem:Euler-Lag_disc}
Let $\bxn{n-1} \in \Xc_k(\I)$. Let $\bxn{n}$ be the corresponding minimizer in \eqref{Minim_X}. Then it satisfies the system of equations  
\begin{align}\label{Euler-Lagrange}
\frac{\xn{n}_i-\xn{n-1}_i}{\tau_k}=\dcs\left(\frac{\dhX\left( \frac{\xn{n}_{i+1}-\xn{n}_{i}}{\dk} \right)-\dhX\left( \frac{\xn{n}_i-\xn{n}_{i-1}}{\dk} \right)}{\dk}-\dvX(\xn{n}_i)\right) 
\end{align}
for each $i=1,\dots,k-1$. Note that the above equation reduces to 
$ \frac{\xn{n}_i-\xn{n-1}_i}{\tau_k}=0 $
for $i\in\{0,k\}$.
\end{lemma}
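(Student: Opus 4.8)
The plan is to use that, by the existence lemma, the minimizer $\bxn{n}$ lies in the \emph{open} set $\Xc_k(\I)$, so it is an unconstrained stationary point of $\bx\mapsto\Phi(\tau_k;\bxn{n-1},\bx)$ with respect to the interior coordinates $x_1,\dots,x_{k-1}$. The endpoints are pinned by the constraint $a=x_0\le\cdots\le x_k=b$, so every feasible vector (in particular $\bxn{n}$ and $\bxn{n-1}$) has $x_0=a$ and $x_k=b$; hence $\xn{n}_i-\xn{n-1}_i=0$ for $i\in\{0,k\}$, which is precisely the degenerate form of \eqref{Euler-Lagrange} at the boundary. It then remains to impose $\partial_{x_i}\Phi=0$ for each $i=1,\dots,k-1$.

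First I would check that $\Phi$ is $C^1$ near the minimizer. By \aspa\ the cost $\c$ is $C^1$, the potential $\vX\in C^2$, and $\hX$ is smooth on $(0,\infty)$; since the strict inequalities $\xn{n}_{i-1}<\xn{n}_i<\xn{n}_{i+1}$ hold at an interior minimizer, every forward difference $\delta\xn{n}_i$ is strictly positive and $\hX$ is evaluated only where it is differentiable. In the flux-limiting case $\c$ is only $C^2$ on $(-\ls,\ls)$; here I would invoke the preceding lemma, which ensures $\abs{\xn{n}_j-\xn{n-1}_j}<\ls\tau_k$, so that the argument $\tfrac{1}{\tau_k}(\xn{n}_j-\xn{n-1}_j)$ of $\c$ stays strictly inside its domain and the same differentiability is available.

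Next I would differentiate $\Phi$ in $x_i$ for a fixed interior index $i$. Only three pieces depend on $x_i$: the single cost term, the two internal-energy terms $\hX(\delta x_{i-1})$ and $\hX(\delta x_i)$ (since $x_i$ enters both adjacent forward differences), and the single potential term $\tfrac1k\vX(x_i)$. Using $\delta x_{i-1}=k(x_i-x_{i-1})$ and $\delta x_i=k(x_{i+1}-x_i)$ and clearing the common factor $\dk$ (that is, multiplying the stationarity relation by $k$), the condition becomes
\begin{align*}
\dc\left(\tfrac{1}{\tau_k}(\xn{n}_i-\xn{n-1}_i)\right)=\frac{\dhX(\delta\xn{n}_i)-\dhX(\delta\xn{n}_{i-1})}{\dk}-\dvX(\xn{n}_i),
\end{align*}
where the two $\hX$-derivatives telescope into the discrete second-difference structure on the right.

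Finally I would invert $\dc$. As $\c$ is strictly convex and $C^1$, its derivative is a continuous strictly increasing bijection onto $\R$ (surjective by the growth condition \aspb, respectively by the blow-up \asfc\ in the flux-limiting case), and Legendre duality gives $\dcs=(\dc)^{-1}$; applying $\dcs$ to both sides of the displayed identity produces exactly \eqref{Euler-Lagrange}. I expect the only genuine obstacle to be the justification of differentiability at the minimizer, namely that it lies strictly inside $\Xc_k(\I)$ (so $\hX$ is differentiable there) and, in the flux-limiting setting, strictly inside the domain of $\c$. Both are delivered by the two preceding lemmas, after which the Euler-Lagrange equation follows from elementary calculus together with the Legendre identity $\dcs=(\dc)^{-1}$.
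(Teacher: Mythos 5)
Your proposal is correct and follows essentially the same route as the paper: compute $\partial_{x_i}\Phi$ on the open set $\Xc_k(\I)$, set it to zero, and apply $\dcs=(\dc)^{-1}$, with the boundary case handled by the pinned endpoints. The additional care you take with differentiability in the flux-limiting case (via the preceding lemma) is consistent with how the paper organizes that argument.
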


\begin{remark}\label{rem:Aink}
We will abbreviate the argument of $\dcs$ above:
\begin{align}\label{eq:Aink_abbreviation}
\Aink:=\frac{\dhX\left( \delta \xn{n}_{i} \right)-\dhX\left( \delta \xn{n}_{i-1} \right)}{\dk}-\dvX(\xn{n}_i)\;.
\end{align} 
\end{remark}

\begin{proof}
The functional $\Phi(\tau_k;\bxn{n-1},\cdot)$ is continuously differentiable on $\Xc_k(\I)=\{\bx\mid a=x_0<x_1<\dots<x_k=b\}$ 
with gradient 
\begin{align}\label{eq:Eul-Lag-Intermediate}
\partial_{x_j} \Phi(\tau;\bxn{n-1},\bx)= \dk \dc\left(\frac{1}{\tau}(x_j-\xn{n-1}_j)\right)+\dhX(\delta x_{j-1})-\dhX(\delta x_{j})+ \dk \dvX(x_j)\;.
\end{align}
We receive \eqref{Euler-Lagrange} as the first order optimality conditions, using $\dcs=(\dc)^{-1}$.
\end{proof}

\subsection{The discrete minimum/maximum principle}
We will prove the first discrete minimum/maximum principle next. It bounds the forward difference quotient of $\bxn{n}_k$ uniformly from above and away from zero, if we have initial data as described in \textit{Theorem \ref{Thm:Main_Thm} (4)}. This initial condition corresponds to $u_0$ being bound from above and away from zero.  

\begin{lemma}\label{lem:min_max_princ}
Let $\bxn{n-1}\in \Xc_k(\I)$ and $\overline{M}>\underline{M}>0$ such that $\underline{M}\leq \delta \xn{n-1}_i\leq \overline{M}$ holds for $i=0,\dots,k-1$. Let $\bxn{n}$ be the minimizer of $\Phi(\tau;\bxn{n-1},\bx)$. 
\begin{enumerate}
\item If $\vu=\mathrm{const.}$, then $\underline{M}\leq \delta \xn{n}_i\leq \overline{M}$ holds for $i=0,\dots,k-1$. 
\item If \asv\ holds, then $\eexp{-\tau_k\kappa} \underline{M}\leq \delta \xn{n}_i\leq \overline{M}$ holds for $i=0,\dots,k-1$, where $\kappa:=\frac{\ddvXa}{\ddcb}$ does only depend on $\I,\vX$ and $\c$ and is given by $\ddvXa:=\max_{x\in[a,b]} \ddvX(x)<\infty$ and $\ddcb$ which is defined in \asv . 
\end{enumerate}
\end{lemma}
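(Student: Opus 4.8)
The plan is to derive both bounds from the discrete Euler--Lagrange equation \eqref{Euler-Lagrange} by a discrete maximum principle applied to the forward differences $\delta\xn{n}_i$. Writing $\xn{n}_i=\xn{n-1}_i+\tau_k\dcs(a^{(n)}_{i,k})$ for the interior indices and recalling $\xn{n}_0=a$, $\xn{n}_k=b$ (so that $\dcs(a^{(n)}_{0,k})$ and $\dcs(a^{(n)}_{k,k})$ are read as $0$), one obtains for every $i=0,\dots,k-1$ the key identity
\begin{align}
\delta\xn{n}_i=\delta\xn{n-1}_i+k\tau_k\left(\dcs(a^{(n)}_{i+1,k})-\dcs(a^{(n)}_{i,k})\right).
\end{align}
Hence it suffices to control the sign, resp.\ the size, of the increment $\dcs(a^{(n)}_{i+1,k})-\dcs(a^{(n)}_{i,k})$ at an index where $\delta\xn{n}$ attains an extremum. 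At the outset I would record the monotonicity facts needed: since $\c$ is even, strictly convex and $C^1$, $\dc$ is odd and strictly increasing, so $\dcs=(\dc)^{-1}$ is odd, strictly increasing with $\dcs(0)=0$; and for both admissible $\hu$ one computes $\dhX(s)=-1/s$ resp.\ $-s^{-m}$, so $\dhX$ is strictly increasing.

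For the interior extremum argument, let $j\in\{1,\dots,k-2\}$ be an index at which $\delta\xn{n}$ is maximal. Subtracting the two Euler--Lagrange relations gives
\begin{align}
a^{(n)}_{j+1,k}-a^{(n)}_{j,k}=k\left(\dhX(\delta\xn{n}_{j+1})-2\dhX(\delta\xn{n}_j)+\dhX(\delta\xn{n}_{j-1})\right)-\left(\dvX(\xn{n}_{j+1})-\dvX(\xn{n}_j)\right).
\end{align}
At a maximum $\delta\xn{n}_{j\pm1}\le\delta\xn{n}_j$, and since $\dhX$ is increasing the bracketed second difference is $\le0$; since $v$ is convex and $\xn{n}_{j+1}>\xn{n}_j$, the potential difference is $\ge0$. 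Thus $a^{(n)}_{j+1,k}\le a^{(n)}_{j,k}$, and monotonicity of $\dcs$ forces the increment in the identity to be $\le0$, whence $\delta\xn{n}_j\le\delta\xn{n-1}_j\le\overline{M}$. The lower bound follows verbatim at a minimal index with all inequalities reversed. When $\vu\equiv\const$ the potential term drops out entirely, so the same computation yields both $\underline{M}\le\delta\xn{n}_j$ and $\delta\xn{n}_j\le\overline{M}$ with no loss, which is assertion (1).

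For assertion (2) only the lower bound degrades. Let $j$ be a minimal interior index. The second difference of $\dhX$ is now $\ge0$, while by the mean value theorem and $\ddvX\le\ddvXa$ the potential difference satisfies $\dvX(\xn{n}_{j+1})-\dvX(\xn{n}_j)\le\ddvXa(\xn{n}_{j+1}-\xn{n}_j)=\tfrac1k\ddvXa\,\delta\xn{n}_j$, so $a^{(n)}_{j+1,k}-a^{(n)}_{j,k}\ge-\tfrac1k\ddvXa\,\delta\xn{n}_j$. Since $\dcs$ is increasing and, by \asv, $1/\ddcb$-Lipschitz (because $(\dcs)^\prime=1/\ddc\le1/\ddcb$), the increment $\dcs(a^{(n)}_{j+1,k})-\dcs(a^{(n)}_{j,k})$ is nonnegative when $a^{(n)}_{j+1,k}\ge a^{(n)}_{j,k}$ and otherwise at least $\tfrac{1}{\ddcb}(a^{(n)}_{j+1,k}-a^{(n)}_{j,k})$; combining with the previous bound gives, in either case, $\dcs(a^{(n)}_{j+1,k})-\dcs(a^{(n)}_{j,k})\ge-\tfrac{\kappa}{k}\,\delta\xn{n}_j$ with $\kappa=\ddvXa/\ddcb$. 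Writing $m:=\delta\xn{n}_j$ and plugging into the identity yields $m\ge\underline{M}-\tau_k\kappa\,m$, i.e.\ $m\ge\underline{M}/(1+\tau_k\kappa)\ge\eexp{-\tau_k\kappa}\underline{M}$ by $1/(1+x)\ge e^{-x}$. The upper bound is unaffected: at a maximal index the potential difference is $\ge0$, so exactly as in (1) the increment is $\le0$ and $\delta\xn{n}_j\le\overline{M}$.

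The step I expect to be most delicate is the treatment of the boundary indices $i\in\{0,k-1\}$, where one neighbour of the extremal difference is replaced by the pinned endpoint condition $\dcs(a^{(n)}_{0,k})=\dcs(a^{(n)}_{k,k})=0$ rather than by a genuine Euler--Lagrange relation. There the potential enters through a single value $\dvX(\xn{n}_1)$ (resp.\ $\dvX(\xn{n}_{k-1})$) instead of a difference, so to retain the favourable sign in the maximum estimate and the $O(1/k)$ smallness in the minimum estimate one must rewrite this evaluation against the fixed endpoint and invoke the endpoint behaviour of $\dvX$ built into the hypotheses. This is the only place where the argument departs from the interior computation, and it is precisely where the constant-potential case (1) is genuinely easier than case (2).
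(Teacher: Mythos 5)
Your argument is essentially the paper's proof: both take the forward difference of the Euler--Lagrange equation, evaluate at an extremal index of $\delta\bxn{n}$, use monotonicity of $\dhX$ to fix the sign of the second difference and convexity of $\vX$ for the potential term, and your Lipschitz bound $\abs{\ddcs}\le 1/\ddcb$ is exactly the paper's Taylor/mean-value step, with the lower bound closed by the same $1+x\le\eexp{x}$ comparison. The boundary-index subtlety you flag at the end is a legitimate concern, but the paper's proof does not treat those indices separately either, so your proposal matches the published argument in both substance and level of detail.
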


\begin{remark}
In the second case, there is still a uniform lower bound on $\delta \xn{n}_i$ for finite time horizons $T$. Indeed let $N_k\tau_k=T$. Then for all $i,n$ with $n\leq N_k$ we have $\eexp{-T\kappa} \underline{M}\leq \delta \xn{n}_i$. Note that this lower bound $\eexp{-T\kappa} \underline{M}$ does only depend on $\dxlb$, $\c$, $\vX$, $T$.
\end{remark}

\begin{proof} 
The proof will consist in both cases of considering an index $i$ such that $\delta \xn{n}_i\geq \delta \xn{n}_j$ for all $j\in \{0,\dots,k-1\}$ (or $\leq$ for the bound from below) and then showing that the r.h.s. of the Euler-Lagrange equation is non-positive for that $i$, implying that $\delta \xn{n}_i\leq \delta \xn{n-1}_i$ which entails the claim. 

For the bound from below in the second claim the calculations will be a little more involved and we can only show $(1+C)\delta \xn{n}_i\leq \delta \xn{n-1}_i$ which then results in the special form of the lower bound in that case. 

In any of the cases, the first step will consist of the use of Taylor's theorem on $\dcs$ in the r.h.s. of the Euler-Lagrange equation, after applying the forward difference $\delta$ to the equation, to receive 
\begin{align*}
\frac{\delta\xn{n}_i-\delta\xn{n-1}_i}{\tau_k}=\delta\left[\dcs\left( \Aink \right)\right] = \ddcs(\zeta_i)\cdot \delta \Aink\;.
\end{align*} 

\begin{enumerate}
\item We will show representatively the bound from above. The bound from below can be obtained by nearly the same calculations.

So let $i\in \{0,\dots,k-1\}$ be an index such that $\delta \xn{n}_i\geq \delta \xn{n}_j$ for all $j\in \{0,\dots,k-1\}$. Since $\dhX$ is monotonously increasing, $\dhX(\delta \xn{n}_i)\geq \dhX(\delta \xn{n}_{j})$ holds for all $j\in \{0,\dots,k-1\}$, too. Consequently, the the second symmetric difference quotient $\delta^2\dhX(\delta \xn{n}_i)$ of $\dhX(\delta \xn{n}_i)$ is non-positive. 
By $\dvX=0$ we can rewrite $\delta \Aink= \delta^2 \dhX(\delta \xn{n}_i)$ and receive
\begin{align*}
\ddcs(\zeta_i)\cdot \delta \Aink &= -\ddcs(\zeta) \delta^2\dhX(\delta \xn{n}_i) \leq 0
\end{align*}
With this inequality at hand we can conclude
\[ \delta \xn{n}_j \leq \delta \xn{n}_i\leq \delta \xn{n-1}_i \leq \overline{M}  \]
for all $j\in \{0,\dots,k-1\}$.

\item Let $\dvX\neq 0$, but \asv\ holds. Then 
\begin{align*}
\delta \Aink &= \delta^2 \dhX(\delta \xn{n}_i) - \delta [\dvX(\xn{n}_i)] \\
&= \delta^2 \dhX(\delta \xn{n}_i) -\ddvX(\hat \zeta_i) \delta \xn{n}_i\;.
\end{align*}
Since $\ddvX$ is continuous on $\I$ and $\vX$ is convex, we have can bound $0\leq \ddvX(\hat \zeta_i)\leq \ddvXa$. Additionally, since $\dcs$ is monotone and since \asv\ holds we can furthermore bound $0\leq \ddcs(\zeta_i)\leq \frac{1}{\ddcb}$. 

Indeed by  $\ddc \geq \ddcb >0$ the factor $\ddcs(\zeta)$ can be bound from above, independently of $\zeta$ as follows. 
\begin{align*}
\ddcs(\zeta)&= \left((\dc)^{-1}\right)^\prime(\zeta) =\frac{1}{\ddc\big( \dcs(\zeta) \big) } \leq \frac{1}{ \ddcb}\;.
\end{align*}
Now assume $\delta \xn{n}_i\geq \delta \xn{n}_{j}$ for all $j\in \{0,\dots, k-1\}$. Then, again, $\ddcs(\zeta_i)\cdot \delta^2 \dhX(\delta \xn{n}_i)\leq 0$. Furthermore $\ddcs(\zeta_i)\cdot \ddvX(\hat \zeta_i) \delta \xn{n}_i \geq 0\cdot \ddvX(\hat \zeta_i) \delta \xn{n}_i= 0$ since $\ddvX(\hat \zeta_i) \delta \xn{n}_i \geq 0$. Then again, this implies 
\[ \delta \xn{n}_i \leq \delta \xn{n-1}_i \]
and we have shown the claim. 

Let now, on the other hand, $\delta \xn{n}_i\leq \delta \xn{n}_{j}$. Then, by an analogous argument as above, $\ddcs(\zeta_i)\cdot \delta^2 \dhX(\delta \xn{n}_i)\geq 0$. But this time, 
\[ \ddcs(\zeta_i)\cdot \ddvX(\hat \zeta_i) \delta \xn{n}_i \leq \frac{1}{\ddcb} \ddvXa \delta \xn{n}_i \]
giving us only 
\begin{align*}
\delta \xn{n-1}_j-\delta \xn{n}_j \leq  \tau   \frac{\ddvXa}{\ddcb}  \delta \xn{n}_j   \;.
\end{align*}  

This inequality implies by $(1+x)\leq \eexp{x}$
\[ \min_{i} \delta \xn{n-1}_i \leq \delta \xn{n-1}_j \leq \left(1+\tau   \frac{\ddvXa}{\ddcb} \right)  \delta \xn{n}_j \leq \eexp{\tau \kappa} \delta \xn{n}_j =\eexp{\tau \kappa} \min_{i} \delta \xn{n}_i  \]
where we abbreviated $\kappa:=\frac{\ddvXa}{\ddcb}$. This proves the lower bound. \qedhere
\end{enumerate}
\end{proof}

We will show the second minimum/maximum principle next. Its claim, expressed in terms of the probability density $u$, is as follows. Assume we start with initial data $u_0$ that are bounded from above and away from zero and the weak derivative of $u_0$ is bounded. Then the weak spatial derivative of the solution $u$ of the PDE \eqref{PDE_u} will be bounded as well. 

\begin{lemma}\label{lem:min_max_2}
Let the prerequisites of \textit{Theorem \ref{Thm:Scnd_Thm}} hold and $\bxn{n-1}\in \Xc_k(\I)$ and let $\bxn{n}$ be the minimizer of $\Phi(\tau;\bxn{n-1},\bx)$. Then 
\[\min \{\min_{j} \delta^2 \xn{n-1}_j,0\}\leq \delta^2 \xn{n}_i\leq \max \{\max_{j} \delta^2 \xn{n-1}_j,0\}\] holds for $i=1,\dots,k-1$. 
\end{lemma}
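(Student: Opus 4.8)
The plan is to mirror the proof of Lemma \ref{lem:min_max_princ}, carried out one order higher in the differences. Since $\vu$ is constant we have $\dvX\equiv 0$, so the Euler--Lagrange equation \eqref{Euler-Lagrange} reads $b_i:=\xn{n}_i-\xn{n-1}_i=\tau_k\,\dcs(\Aink)$ for interior $i$, while its boundary part gives $\xn{n}_0=\xn{n-1}_0=a$ and $\xn{n}_k=\xn{n-1}_k=b$, i.e.\ $b_0=b_k=0$. Applying the second symmetric difference $\delta^2$ to the trivial identity $\xn{n}=\xn{n-1}+b$ and using linearity yields, for $i=1,\dots,k-1$,
\[
\delta^2\xn{n}_i=\delta^2\xn{n-1}_i+\delta^2 b_i .
\]
Hence it suffices to control the sign of $\delta^2 b$ at the spatial extremum of $\delta^2\xn{n}$: if $i^{*}$ maximizes $\delta^2\xn{n}$, the maximal value is positive, and $\delta^2 b_{i^{*}}\le 0$ there, then $\delta^2\xn{n}_{i^{*}}\le\delta^2\xn{n-1}_{i^{*}}\le\max_j\delta^2\xn{n-1}_j$; a non-positive maximum is trivially bounded by $\max\{\max_j\delta^2\xn{n-1}_j,0\}$, and the lower bound follows symmetrically from the minimizing index.

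First I would record the monotonicity facts that drive the sign analysis. As $\c$ is even and strictly convex, $\dcs=(\dc)^{-1}$ is odd, strictly increasing, with $\dcs(0)=0$; and $\dhX$ is strictly increasing (as already used in Lemma \ref{lem:min_max_princ}) and, for the admissible choices of $\hu$, \emph{concave}, so that $\ddhX$ is positive and decreasing. From \eqref{eq:Aink_abbreviation} with $\dvX\equiv0$ one has $\Aink=\frac{\dhX(\delta\xn{n}_i)-\dhX(\delta\xn{n}_{i-1})}{\dk}=\ddhX(\eta_i)\,\delta^2\xn{n}_i$ for an intermediate $\eta_i$, whence $\sign{b_i}=\sign{\Aink}=\sign{\delta^2\xn{n}_i}$. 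In particular $\delta^2\xn{n}_{i^{*}}>0$ forces $b_{i^{*}}>0$, and the forward neighbour is controlled: the maximality gives $\delta\xn{n}_{i^{*}+1}-\delta\xn{n}_{i^{*}}\le\delta\xn{n}_{i^{*}}-\delta\xn{n}_{i^{*}-1}$, and via the mean value theorem the slope factor $\ddhX$ decreases while this width decreases as well, so $b_{i^{*}+1}\le b_{i^{*}}$.

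The hard part is exactly the remaining, backward, sign control. Written out, the doubly-differenced equation $\delta^2\xn{n}_{i^{*}}-\delta^2\xn{n-1}_{i^{*}}=\tau_k\,\delta^2[\dcs(\Aink)]_{i^{*}}$ contains, after a Taylor expansion of $\dcs$ about $\Aink$, a leading term proportional to the \emph{second} difference of $\Aink$ plus curvature terms carrying the third derivative of $\dcs$. This is genuinely a fourth-order quantity in $\xn{n}$, and its sign is \emph{not} controlled by the second-order maximality $\delta^2\xn{n}_{i^{*}}\ge\delta^2\xn{n}_{i^{*}\pm1}$ alone: the backward comparison $b_{i^{*}-1}\le b_{i^{*}}$ need not hold termwise, because the decrease of the width and the increase of the slope factor $\ddhX$ now pull in opposite directions. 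I expect this to be the main obstacle. To close it one must use the full strength of the maximality (not only at $i^{*}\pm1$) together with the concavity of $\dhX$ and the sign of the third derivative of $\dcs$ (for the flux-limiting cost $\dcs$ is bounded and S-shaped, hence concave on $(0,\infty)$), either by propagating the extremum towards the boundary, where $b_0=b_k=0$ forces a contradiction, or -- more robustly -- by replacing the extremum argument with a competitor estimate in the spirit of the feasibility lemma above: if $\delta^2\xn{n}$ exceeded $\max\{\max_j\delta^2\xn{n-1}_j,0\}$, bend the minimizer down at the offending indices and use convexity of $\Phi(\tau_k;\bxn{n-1},\cdot)$ to show the functional strictly decreases, contradicting minimality. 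The boundary indices $i^{*}\in\{1,k-1\}$, where $b_0=b_k=0$ enters the second difference directly, are treated separately and are easier; throughout, the slopes $\delta\xn{n}_i$ stay in the range where the monotonicity and concavity of $\dhX$ apply by virtue of Lemma \ref{lem:min_max_princ}.
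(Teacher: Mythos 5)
Your overall strategy coincides with the paper's: apply the second symmetric difference to the Euler--Lagrange equation, evaluate at an index $i^*$ maximizing $\delta^2\bxn{n}$ (assuming the maximal value is positive), and show the resulting right-hand side is non-positive there. Your forward comparison, i.e.\ $a^{(n)}_{i^*+1,k}\le a^{(n)}_{i^*,k}$ and hence $b_{i^*+1}\le b_{i^*}$, is exactly the paper's argument: either $\delta^2\xn{n}_{i^*+1}\le 0$ and monotonicity of $\dhX$ suffices, or $\delta^2\xn{n}_{i^*+1}>0$ and the mean value theorem together with the decrease of $\ddhX$ and the maximality of $\delta^2\xn{n}_{i^*}$ closes the case. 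However, your proposal is not a proof: you explicitly leave the backward comparison $b_{i^*-1}\le b_{i^*}$ open, calling it ``the main obstacle'' and only listing candidate strategies (propagating the extremum to the boundary, a competitor/convexity argument) without carrying any of them out. Since the conclusion $\delta^2 b_{i^*}\le 0$ needs both sign conditions, the key step is missing.

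That said, your diagnosis of \emph{where} the difficulty sits is accurate, and it is worth recording that the paper's own proof is thin at exactly this point: it disposes of the backward term with ``with a similar argument one can prove that $\delta[\dhX(\delta\xn{n}_{i-2})]-\delta[\dhX(\delta\xn{n}_{i-1})]<0$.'' Writing that ``similar argument'' out in the case $\delta^2\xn{n}_{i-1}>0$, one needs
\begin{align*}
\ddhX(\eta_1)\,\delta^2\xn{n}_{i}\;>\;\ddhX(\eta_0)\,\delta^2\xn{n}_{i-1},
\qquad \eta_0\in\big(\delta\xn{n}_{i-2},\delta\xn{n}_{i-1}\big),\ \eta_1\in\big(\delta\xn{n}_{i-1},\delta\xn{n}_{i}\big),
\end{align*}
where maximality gives $\delta^2\xn{n}_{i}\ge\delta^2\xn{n}_{i-1}>0$ but also $\ddhX(\eta_1)<\ddhX(\eta_0)$; the two factors pull in opposite directions, which is precisely the competition you point out. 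Concretely, for $m=1$ one has $\dhX(s)=-1/s$, and the slopes $\delta\xn{n}_{i-2}=1$, $\delta\xn{n}_{i-1}=2$, $\delta\xn{n}_{i}=4$ give $\dhX(4)-2\dhX(2)+\dhX(1)=-1/4<0$, i.e.\ $a^{(n)}_{i-1,k}>a^{(n)}_{i,k}$, even though $i$ can be arranged to be the maximizer of $\delta^2\bxn{n}$. So the termwise sign claim is false in general, and completing the lemma genuinely requires more than the symmetric repetition of the forward case: one of the routes you sketch (or an argument exploiting the full maximality over all $j$, or controlling the \emph{sum} of the two Taylor terms rather than each separately) would have to be worked out in detail.
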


Note that with initial data $\bxn{0}_k$ bounded as in \textit{Theorem \ref{Thm:Scnd_Thm}}, this implies the bounds $\min\{\ddxlb,0\}\leq \delta^2 \xn{n}_i \leq \max \{\ddxub,0\}$ for every element $\delta^2 \xn{n}_i$ of every $\delta^2 \bxn{n}_k$ i.e. for all $i,n,k$.

\begin{proof}
We will, again, exemplarily derive the upper bound and the lower one can be found by very similar calculations. 

This proof is a little bit more elaborate than the proof of \textit{Lemma \ref{lem:min_max_princ}}. We will again start by assuming $\delta^2 \xn{n}_i\geq \delta^2 \xn{n}_j$ for all $j\in\{1,\dots,k-1\}$ but this time we do not a priori know that $\delta^2\xn{n}_i$ is positive, which is the reason for the structure of the bounds. Additionally it does not appear explicitly on the right hand side of the Euler-Lagrange equation but is hidden in
\[ \delta[\dhX(\delta\xn{n}_{i-1})]=\frac{\dhX\left(\delta \xn{n}_i\right)-\dhX\left( \delta \xn{n}_{i-1} \right)}{\dk} \;. \] 

So let us consider $\delta^2 \xn{n}_i\geq \delta^2 \xn{n}_j$ for all $\{1,\dots,k-1\}$ and lets furthermore assume $\delta^2 \xn{n}_i>0$ since otherwise there is nothing left to show. Then we receive as the second symmetric difference quotient of the Euler-Lagrange equation
\begin{align*} 
\frac{\delta^2 \xn{n}_i-\delta^2 \xn{n-1}_i}{\tau_k}&=\delta^2 \left[\dcs\left(\delta [\dhX(\delta \xn{n}_{i-1})]\right) \right] \\
&= \frac{ \dcs\left(\delta [\dhX(\delta \xn{n}_{i})]\right)
-2\dcs\left(\delta [\dhX(\delta \xn{n}_{i-1})]\right)
+\dcs\left(\delta [\dhX(\delta \xn{n}_{i-2})]\right)  }{(\dk)^2}
\end{align*}
and by applying Taylor's theorem at the point $\delta [\dhX(\delta \xn{n}_{i-1})]$ to the first and last summand the numerator equals 
\begin{align*}
\ddcs(\zeta_i) \left( \delta [\dhX(\delta \xn{n}_{i})]-\delta [\dhX(\delta \xn{n}_{i-1})]\right) + \ddcs(\zeta_{i-1})\left( \delta [\dhX(\delta \xn{n}_{i-2})]-\delta [\dhX(\delta \xn{n}_{i-1})]\right)\;.
\end{align*}
We will show 
\[ \left( \delta [\dhX(\delta \xn{n}_{i})]-\delta [\dhX(\delta \xn{n}_{i-1})]\right) <0 \]
next. To that end, note that our assumptions imply $\delta \xn{n}_i >\delta \xn{n}_{i-1}$ and consider the following two cases. 
\begin{itemize}
\item[Case 1:] $\delta \xn{n}_{i+1}>\delta \xn{n}_i$. This implies $\delta^2 \xn{n}_{i+1}>0$. We apply Taylor's theorem again, to receive 
\begin{align*}
\delta [\dhX(\delta \xn{n}_{i})]-\delta [\dhX(\delta \xn{n}_{i-1})]&= \ddhX(\hat \zeta_i)\delta^2 \xn{n}_{i+1} - \ddhX(\hat \zeta_{i-1}) \delta^2 \xn{n}_{i}
\end{align*}
where $\hat \zeta_i>\delta \xn{n}_i>\hat \zeta_{i-1}$ and therefore $\ddhX(\hat \zeta_i)<\ddhX(\hat \zeta_{i-1})$ since $\ddhX$ is strictly dicreasing. Consequently 
\[\ddhX(\hat \zeta_i)\delta^2 \xn{n}_{i+1} - \ddhX(\hat \zeta_{i-1}) \delta^2 \xn{n}_{i} < \ddhX(\hat \zeta_{i-1})\left( \delta^2 \xn{n}_{i+1} -  \delta^2 \xn{n}_{i} \right) \leq 0 \]
by our assumption on $\delta^2 \xn{n}_i$ and $\ddhX>0$.
\item[Case 2:] $\delta \xn{n}_{i+1}<\delta \xn{n}_i$. Together with $\delta \xn{n}_{i-1}<\delta \xn{n}_i$ this implies, by monotonicity of $\dhX$ that 
\[ \delta [\dhX(\delta \xn{n}_{i})]-\delta [\dhX(\delta \xn{n}_{i-1})]=\frac{\dhX(\delta \xn{n}_{i+1})-2\dhX(\delta \xn{n}_{i})+\dhX(\delta \xn{n}_{i-1})}{\dk} \leq 0 \]
holds, right away.
\end{itemize}
With a similar argument one can prove that 
\[ \delta [\dhX(\delta \xn{n}_{i-2})]-\delta [\dhX(\delta \xn{n}_{i-1})]< 0 \]
holds, too. 

Now $\ddcs$ being positive implies 
\[ \ddcs(\zeta_i) \left( \delta [\dhX(\delta \xn{n}_{i})]-\delta [\dhX(\delta \xn{n}_{i-1})]\right) + \ddcs(\zeta_{i-1})\left( \delta [\dhX(\delta \xn{n}_{i-2})]-\delta [\dhX(\delta \xn{n}_{i-1})]\right) < 0 \]
and consequently 
\[ \frac{\delta^2 \xn{n}_i-\delta^2 \xn{n-1}_i}{\tau_k} < 0 \]
which then implies the claim. 
\end{proof}

\subsection{A priori estimates}

\begin{proposition}\label{prop:energy_inequality_1}
Let $\bxn{0}\in \Xc_k(\I)$ and $\bxn{n}$ $n=1,2,\dots$ the sequence of minimizer recursively defined by \eqref{Minim_X} and let us abbreviate  $\ct(s):=\dc(s)s$ (c.f. \aspb ). Then, for every $n$ and every pair of indices $m_1<m_2$  
\begin{align}\label{ineq:Energy_Sum}
\begin{split}
\tau_k \dk \sum_{i=0}^k \tilde \c \left( \frac{\xn{n}_i-\xn{n-1}_i}{\tau_k}\right) &\leq \HXd(\bxn{n-1})-\HXd(\bxn{n})\\
\tau_k \dk \sum_{n=m_1+1}^{m_2} \sum_{i=0}^k \ct\left( \frac{\xn{n}_i-\xn{n-1}_i}{\tau_k}\right) &\leq \HXd(\bxn{m_1})-\HXd(\bxn{m_2})
\end{split}
\end{align}
holds.
\end{proposition}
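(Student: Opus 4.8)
The plan is to bound the energy decrement $\HXd(\bxn{n-1})-\HXd(\bxn{n})$ from below by the dissipation term directly, using convexity together with the discrete Euler--Lagrange equation from \textit{Lemma \ref{lem:Euler-Lag_disc}}. Note that \textit{Corollary \ref{cor:sequence_descends_in_H}} is not immediately enough: by convexity of $\c$ with $\c(0)=0$ one has $\c(s)\le\dc(s)s=\ct(s)$, so controlling $\sum\c$ points the wrong way for controlling $\sum\ct$. Hence I would work from the energy side. First I would split $\HXd=\UXd+\VXd$ and, since both $\hX$ and $\vX$ are convex $C^1$ functions, apply the tangent-line (subgradient) inequality termwise to get
\begin{align*}
\HXd(\bxn{n-1})-\HXd(\bxn{n}) \geq \dk \sum_{i=0}^{k-1}\dhX(\delta\xn{n}_i)\big(\delta\xn{n-1}_i-\delta\xn{n}_i\big) + \dk\sum_{i=0}^{k}\dvX(\xn{n}_i)\big(\xn{n-1}_i-\xn{n}_i\big).
\end{align*}

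Next I would perform a discrete summation by parts (Abel summation) on the internal-energy sum. Writing $y_i:=\xn{n-1}_i-\xn{n}_i$, one has $\delta\xn{n-1}_i-\delta\xn{n}_i=\delta y_i$, and the boundary terms produced by the Abel rearrangement vanish because $y_0=y_k=0$ (the endpoints are pinned, $\xn{n}_0=\xn{n-1}_0=a$ and $\xn{n}_k=\xn{n-1}_k=b$). The rearrangement turns the first sum into $-\dk\sum_{i=1}^{k-1}\big(\delta[\dhX(\delta\xn{n}_{i-1})]\big)y_i$, and recognizing $\delta[\dhX(\delta\xn{n}_{i-1})]=\Aink+\dvX(\xn{n}_i)$ from Remark~\ref{rem:Aink} lets the potential contribution cancel exactly against the second sum. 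What remains is the clean expression $\dk\sum_{i=1}^{k-1}\Aink\,(\xn{n}_i-\xn{n-1}_i)$. At this point I would substitute the Euler--Lagrange equation in the form $\Aink=\dc\big((\xn{n}_i-\xn{n-1}_i)/\tau_k\big)$; since $\ct(s)=\dc(s)s$, each summand becomes exactly $\tau_k\,\ct\big((\xn{n}_i-\xn{n-1}_i)/\tau_k\big)$. Finally, because $\xn{n}_0-\xn{n-1}_0=\xn{n}_k-\xn{n-1}_k=0$ and $\ct(0)=0$, the index range may be extended to $i=0,\dots,k$ for free, yielding the first inequality in \eqref{ineq:Energy_Sum}; summing over $n=m_1+1,\dots,m_2$ telescopes the right-hand side to $\HXd(\bxn{m_1})-\HXd(\bxn{m_2})$ and gives the second.

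I expect the main obstacle to be the bookkeeping in the summation-by-parts step and, in particular, verifying that the potential terms cancel precisely once $\Aink$ is expanded: getting the index shifts in $\delta[\dhX(\delta\xn{n}_{i-1})]$ consistent with the factors of $k$ hidden in the $\delta$-operator, and confirming the boundary terms are annihilated by the fixed endpoints, is where an error is most likely to creep in. I would also double-check the orientation of the convexity inequality (namely $f(b)-f(a)\ge f'(a)(b-a)$ evaluated at $a=\delta\xn{n}_i$), since this is what makes the lower bound come out with the correct sign, consistent with $\ct\ge 0$ following from $\c$ being even and convex. The use of the Euler--Lagrange equation with equality is legitimate because the existence lemma places the minimizer in the open set $\Xc_k(\I)$ (and, for flux-limiting cost, the preceding lemma keeps it away from the singularity of $\c$), so $\Phi$ is differentiable there.
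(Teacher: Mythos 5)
Your proposal is correct and is essentially the paper's own proof read in the opposite direction: the paper multiplies the stationarity condition $\partial_{x_i}\Phi=0$ by the increment $\xn{n}_i-\xn{n-1}_i$, sums, index-shifts, and invokes convexity of $\hX$ and $\vX$ at the end, while you apply the tangent-line inequality to $\HXd(\bxn{n-1})-\HXd(\bxn{n})$ first and then sum by parts and substitute the Euler--Lagrange equation --- the same ingredients in reverse order. Your side observation that \textit{Corollary \ref{cor:sequence_descends_in_H}} alone is insufficient (since $\c(s)\le\ct(s)$ by convexity) is also accurate and matches why the paper proves this proposition separately.
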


\begin{proof}
Recall $\delta \xn{n}_i=\frac{\xn{n}_{i+1}-\xn{n}_i}{\dk}$. The equation \eqref{eq:Eul-Lag-Intermediate} has to be equal to zero at a minimizer implying for $i=1,\dots,k-1$
\begin{align*}
\tau_k \dk \c\left( \frac{\xn{n}_i-\xn{n-1}_i}{\tau_k}\right)=-\left[\dhX\left( \delta \xn{n}_{i-1} \right)-\dhX\left( \delta \xn{n}_{i} \right)+ \dk \dvX(\xn{n}_i)\right]\;.
\end{align*}
Multiplying these equations by $(\xn{n}_i-\xn{n-1}_i)$ and summing them up gives us, after rearranging and an index-shift as well as using convexity of $\hX$
\begin{align*}
\tau_k \dk \sum_{i=1}^{k-1}& \tilde \c\left( \frac{\xn{n}_i-\xn{n-1}_i}{\tau_k}\right) \\ 
&\leq \dk \sum_{i=1}^{k-1} \hX\left( \delta \xn{n-1}_i \right)-\hX\left( \delta \xn{n}_i \right)  +  \vX(\xn{n-1}_i)-\vX(\xn{n}_i) \\
&= \HXd(\bxn{n-1})-\HXd(\bxn{n})\;.
\end{align*}
This shows the first claim. The second one follows directly when summing up the first one from $n=m_1+1$ to $m_2$ and minding the telescopic sum on the r.h.s. .
\end{proof}

\begin{corollary}\label{cor:Hoel_Est}
The estimates \eqref{ineq:Energy_Sum} imply H\"older-type estimates. Let $t_1=m_1\tau_k$ and $t_2=m_2\tau_k$. Then
\begin{align}
\dk \sum_{i=0}^{k} \abs{\xn{m_2}_i - \xn{m_1}_i}  \leq (t_1-t_2+\tau_k)^{1/p} \frac{1}{\sqrt[p]{\alpha}}\left( \HXd(\bxn{0})-\underline \HXd \right)^{1/p}
\end{align}
holds for some $C$ depending only on $m$, $\HXd(\xn{0})$ and $\c$. 
\end{corollary}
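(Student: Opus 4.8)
The plan is to read off the spatial $L^1$-in-$\xi$ modulus of continuity of the discrete solution directly from the energy-dissipation bound \eqref{ineq:Energy_Sum}, mimicking the passage from a metric-derivative estimate to a H\"older bound for a curve. First I would telescope in the time index: by the triangle inequality,
\[
\dk \sum_{i=0}^k \abs{\xn{m_2}_i-\xn{m_1}_i} \;\le\; \dk \sum_{i=0}^k \sum_{n=m_1+1}^{m_2} \abs{\xn{n}_i-\xn{n-1}_i}\,,
\]
and I would rewrite every increment through the difference quotient, $\abs{\xn{n}_i-\xn{n-1}_i}=\tau_k\,\abs{(\xn{n}_i-\xn{n-1}_i)/\tau_k}$, so that the left-hand side becomes $\tau_k\dk\sum_{n,i}\abs{(\xn{n}_i-\xn{n-1}_i)/\tau_k}$. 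Since the endpoints are clamped, $\xn{n}_0=a$ and $\xn{n}_k=b$ for every $n$, the $i=0$ and $i=k$ terms vanish and only the $k-1$ interior indices contribute, which is what lets me avoid spurious constants in the counting factor below.

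Next I would apply H\"older's inequality over the combined index set $\{(n,i)\mid m_1<n\le m_2,\ 1\le i\le k-1\}$, equipped with the weight $\tau_k\dk$ and the conjugate exponents $p,p^\prime$:
\[
\tau_k\dk\sum_{n,i}\abs{\tfrac{\xn{n}_i-\xn{n-1}_i}{\tau_k}} \;\le\; \Big(\tau_k\dk\sum_{n,i}\abs{\tfrac{\xn{n}_i-\xn{n-1}_i}{\tau_k}}^{\,p}\Big)^{1/p}\;\Big(\tau_k\dk\sum_{n,i}1\Big)^{1/p^\prime}\,.
\]
The second factor equals $\big((m_2-m_1)\,\tau_k\,\tfrac{k-1}{k}\big)^{1/p^\prime}\le (t_2-t_1)^{1/p^\prime}$. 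For the first factor I would invoke the lower bound in \aspb, namely $\alpha\abs{s}^{p}\le s\dc(s)=\ct(s)$, to replace $\abs{s}^p$ by $\tfrac{1}{\alpha}\ct(s)$, and then feed in the summed energy inequality \eqref{ineq:Energy_Sum}, which bounds $\tau_k\dk\sum_{n,i}\ct(\cdot)$ by $\HXd(\bxn{m_1})-\HXd(\bxn{m_2})$.

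Finally I would make the bound uniform in $m_1,m_2$: by Corollary \ref{cor:sequence_descends_in_H} the energy is non-increasing along the sequence, so $\HXd(\bxn{m_1})\le \HXd(\bxn{0})$, while $\HXd$ admits a uniform lower bound $\HXdb$ on the feasible set (as in the existence lemma, combining monotonicity of $\hX$ with the upper bound on $\delta\xn{n}_i$ furnished by Lemma \ref{lem:min_max_princ}); hence $\HXd(\bxn{m_1})-\HXd(\bxn{m_2})\le \HXd(\bxn{0})-\HXdb$. Assembling the three pieces yields the asserted estimate. I do not anticipate a genuine obstacle here; the \textbf{only delicate point is bookkeeping the powers of $\tau_k$}, where the factor $\tau_k$ carried by each increment and the factor $\tau_k^{-1/p}$ coming out of the $L^p$ energy term recombine through $(m_2-m_1)^{1/p^\prime}$ into $\big((m_2-m_1)\tau_k\big)^{1/p^\prime}=(t_2-t_1)^{1/p^\prime}$. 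Thus the temporal length enters with the H\"older-conjugate exponent $1/p^\prime=1-1/p$ (this is the power that later produces the in-time H\"older regularity of $X_\ast$), and the additive $\tau_k$ in the displayed constant is the harmless off-by-one between the closed time interval and the half-open interpolation intervals of \eqref{def:piecewise_const_def}.
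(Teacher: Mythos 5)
Your argument is correct and is essentially the paper's own proof: telescope the increments in $n$, apply H\"older with weight $\tau_k\dk$ and exponents $p,p^\prime$, convert $\abs{s}^p$ to $\tfrac{1}{\alpha}\ct(s)$ via the lower bound in \aspb, insert \eqref{ineq:Energy_Sum}, and make the bound uniform via Corollary \ref{cor:sequence_descends_in_H} and the lower bound $\HXdb$. Note that your time factor $(t_2-t_1+\tau_k)^{1/p^\prime}$ agrees with the exponent the paper's proof actually derives; the $1/p$ appearing in the displayed statement of the corollary (and the reversed order $t_1-t_2$) is a typo in the paper, not a defect of your argument.
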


\begin{proof} Recall \aspb\ implies $\abs{\frac{\xn{n}_i - \xn{n-1}_i}{\tau_k}}^p\leq \frac{1}{\beta}\ct \left(\frac{\xn{n}_i - \xn{n-1}_i}{\tau_k}\right)$.
A standard H\"older type argument shows 
\begin{align*}
\dk \sum_{i=0}^{k} \abs{\xn{m_2}_i - \xn{m_1}_i}
&\leq (t_2-t_1+\tau_k)^{1/p^\prime} \left( \tau_k \frac{1}{\alpha} \dk \sum_{n=m_1+1}^{m_2}   \sum_{i=0}^{k} \ct \left(\frac{\xn{n}_i - \xn{n-1}_i}{\tau_k}\right)\right)^{1/p} \\
&\leq (t_2-t_1+\tau_k)^{1/p^\prime} \frac{1}{\sqrt[p]{\alpha}}\left( \HXd(\bxn{m_1})-\HXd(\bxn{m_2}) \right)^{1/p} \;.
\end{align*}
Now with \textit{Corollary \ref{cor:sequence_descends_in_H}} we can estimat $\HXd(\bxn{m_1})\leq \HXd(\bxn{0})$ for every $n=0,1,\dots$. And since $\HXd$ is bound from below by $\HXdb>-\infty$, where $\HXdb$ does not depend on $k$ (and therefore not on $\tau_k$ either) we arrive at our claim with $\HXd(\bxn{m_2})\geq \HXdb$.
\end{proof}

\begin{proposition}\label{prop:Entropy_ineq_conseq}
For every $k,N$, the sequence $(\bxn{n}_k)_{n=1,\dots,N}$ fulfills the following Entropy-inequality
\begin{align*}
\tau_k \dk  \sum_{n=1}^N \sum_{i=1}^{k-1} \abs{\Aink }^{p^\prime}&\leq \sqrt[p-1]{\beta} \left(\sup_{k}\HXd(\bxn{0}_k)- \HXdb\right) \\
\tau_k \dk  \sum_{n=1}^N \sum_{i=1}^{k-1} \abs{\dcs\left(\Aink\right) }^p&\leq \frac{1}{\alpha} \left(\sup_{k}\HXd(\bxn{0}_k)- \HXdb\right)\;.
\end{align*}
Note that these bounds do not depend on $k$.
\end{proposition}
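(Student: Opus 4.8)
The plan is to reduce both bounds to the single energy estimate of Proposition~\ref{prop:energy_inequality_1} after expressing everything through the difference quotients $s_i^{(n)} := \tfrac{1}{\tau_k}(\xn{n}_i - \xn{n-1}_i)$. The bridge is the Euler--Lagrange equation (Lemma~\ref{lem:Euler-Lag_disc} together with Remark~\ref{rem:Aink}), which reads $s_i^{(n)} = \dcs(\Aink)$ for $i = 1,\dots,k-1$. Since $\dcs = (\dc)^{-1}$, applying $\dc$ to both sides also gives $\Aink = \dc(s_i^{(n)})$. Thus the two left-hand sides are exactly sums of $|\dc(s_i^{(n)})|^{p^\prime}$ and of $|s_i^{(n)}|^p$, and it suffices to prove the two pointwise inequalities $|\dc(s)|^{p^\prime} \le \sqrt[p-1]{\beta}\,\ct(s)$ and $|s|^p \le \tfrac{1}{\alpha}\ct(s)$, where $\ct(s) = s\,\dc(s)$ as introduced in Proposition~\ref{prop:energy_inequality_1}.

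These pointwise bounds come from \aspb. Since $\c$ is even and strictly convex with $\c(0)=0$, the derivative $\dc(s)$ carries the sign of $s$, so $\ct(s) = s\,\dc(s) = |s|\,|\dc(s)| \ge 0$. The second inequality is then immediate from $\alpha|s|^p \le s\,\dc(s)$. For the first, I would write $|\dc(s)| = \ct(s)/|s|$ for $s\ne 0$, raise this to the power $p^\prime$, and insert $\ct(s) \le \beta |s|^p$ into the factor $\ct(s)^{p^\prime - 1}$; because $p(p^\prime-1) = p^\prime$, the powers of $|s|$ cancel exactly and what remains is $|\dc(s)|^{p^\prime} \le \beta^{p^\prime-1}\ct(s) = \sqrt[p-1]{\beta}\,\ct(s)$, using $p^\prime - 1 = 1/(p-1)$. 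This exponent bookkeeping, hinging on the identity $p(p^\prime-1)=p^\prime$ and on the sign compatibility of $\dc$, is the only genuinely delicate point of the proof; the rest is assembly.

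Finally I would carry out the summation and invoke energy control. Multiplying each pointwise inequality by $\tau_k\dk$ and summing over $i=1,\dots,k-1$ and $n=1,\dots,N$, I note that the boundary indices $i\in\{0,k\}$ satisfy $s_i^{(n)}=0$, since the endpoints are pinned at $a$ and $b$ (cf.\ the reduction $\tfrac{1}{\tau_k}(\xn{n}_i-\xn{n-1}_i)=0$ in Lemma~\ref{lem:Euler-Lag_disc}); hence $\ct(s_i^{(n)})=0$ there and extending the inner sum to $i=0,\dots,k$ changes nothing while matching the index range of Proposition~\ref{prop:energy_inequality_1}. Its telescoping estimate with $m_1=0$, $m_2=N$ then bounds $\tau_k\dk\sum_{n}\sum_{i}\ct(s_i^{(n)})$ by $\HXd(\bxn{0})-\HXd(\bxn{N})$. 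To make this $k$-independent I would use $\HXd(\bxn{0})=\HXd(\bxn{0}_k)\le \sup_k \HXd(\bxn{0}_k)$ together with the uniform lower bound $\HXd(\bxn{N})\ge\HXdb$ (the same $\HXdb$, independent of $k$, already employed in Corollary~\ref{cor:Hoel_Est}), which produces precisely the two stated inequalities with constants $\sqrt[p-1]{\beta}$ and $\tfrac{1}{\alpha}$.
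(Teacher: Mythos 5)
Your proposal is correct and follows essentially the same route as the paper: both use the Euler--Lagrange identity $\frac{1}{\tau_k}(\xn{n}_i-\xn{n-1}_i)=\dcs(\Aink)$ (equivalently $\Aink=\dc(s_i^{(n)})$) to rewrite the summands as $\ct$ evaluated at the time-difference quotients, then invoke the telescoping energy estimate of Proposition~\ref{prop:energy_inequality_1} together with the bounds from \aspb. The only cosmetic difference is that you phrase the pointwise inequalities in the variable $s$ (bounding $|\dc(s)|^{p^\prime}$ and $|s|^p$ by $\ct(s)$) while the paper phrases the equivalent inequalities in $r=\dc(s)$ (bounding $r\dcs(r)$ against $|r|^{p^\prime}$ and $|\dcs(r)|^p$); your exponent computation $p(p^\prime-1)=p^\prime$ is exactly the same bookkeeping.
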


\begin{proof}
The starting point of this proof will be the second inequality in \eqref{ineq:Energy_Sum}. We will show an equation for the r.h.s. and consequently the upper bound will apply to $\Aink$ as well. 

Let $T=N\tau_k$. Then we receive when applying the Euler-Lagrange equation in the last step
\begin{align*}
\tau_k \dk \sum_{n=1}^N \sum_{i=1}^{k-1} \ct \left( \frac{\xn{n}_i-\xn{n-1}_i}{\tau_k} \right)&=\tau_k \dk \sum_{n=1}^N \sum_{i=1}^{k-1}  \c\left( \frac{\xn{n}_i-\xn{n-1}_i}{\tau_k} \right)\left( \frac{\xn{n}_i-\xn{n-1}_i}{\tau_k} \right) \\
&= \tau_k \dk \sum_{n=1}^N \sum_{i=1}^{k-1} \Aink \dcs\left(\Aink\right)\;.
\end{align*}
Now \textit{Proposition \ref{prop:energy_inequality_1}} yields
\begin{align*}
\tau_k \dk \sum_{n=1}^N \sum_{i=1}^{k-1} \Aink \dcs\left(\Aink\right)=\tau_k \dk \sum_{n=1}^N \sum_{i=1}^{k-1}& \ct \left( \frac{\xn{n}_i-\xn{n-1}_i}{\tau_k} \right) \leq \sup_{k}\HXd(\bxn{0}_k)- \HXdb\;.
\end{align*}
The expression above on the l.h.s. invites us to use the estimate from \aspb\ $\alpha \abs{s}^p \leq s\dc(s)\leq \beta \abs{s}^p$. On the one hand $r\dcs(r)\leq \frac{1}{\sqrt[p-1]{\alpha}}\abs{r}^{p^\prime}$ and on the other hand $r\dcs(r) \geq \frac{1}{\sqrt[p-1]{\beta}}\abs{r}^{p^\prime}$ so we arrive at  
\begin{align*}
\tau_k \dk  \sum_{n=1}^N \sum_{i=1}^{k-1} \abs{\Aink }^{p^\prime}&\leq \sqrt[p-1]{\beta} \left(\sup_{k}\HXd(\bxn{0}_k)- \underline{\HXd}\right) \\
\tau_k \dk  \sum_{n=1}^N \sum_{i=1}^{k-1} \abs{\dcs \left(\Aink\right) }^p&\leq \frac{1}{\alpha} \left(\sup_{k}\HXd(\bxn{0}_k)- \HXdb\right)
\end{align*}
immediately.
\end{proof}

\section{Convergences of the approximate solution}
In this section we prove the convergence of the Euler-Lagrange equation in terms of functions and obtain a convergence result needed in the next section when it comes to identifying the limit of the Euler-Lagrange equation. 

Before we can talk about convergences in terms of functions, we have to define our approximate IDF and other approximate functions.

\subsection{The approximate solutions} 
The approximations to solutions of our PDE \eqref{PDE_u} in terms of inverse distribution functions will be defined as piecewise constant using the sequences of vectors $\bxn{n}_k$ from above. To that end we define a general rule to receive functions from such a sequence ${\bf b}:=(b_i^{(n)})_{i=0,\dots,k \atop n=0,\dots,N_k}$. 

\begin{definition}\label{def:Piecw_const_interp}
Let $\tau_k \searrow 0$ in $k$. Let furthermore $T=N_k\tau_k$ and $(b_i^{(n)})_{i=0,\dots,k \atop n=0,\dots,N_k}$ be a real valued sequence. We will abbreviate  $\IndTX(t,\xi):=\ind{((n-1)\tau_k,n\tau_k]\times (\tfrac{i-1}{k},\tfrac{i}{k}]}(t,\xi)$.

Then we define the $b_i^{(n)}$-induced, function $\mathfrak{P}[{\bf b}]:\TJ\rightarrow \R$ as the piecewise constant function on $((n-1)\tau_k,n\tau_k]\times (\tfrac{i-1}{k},\tfrac{i}{k}]$ as 
\[ \mathfrak{P}[{\bf b}] (t,\xi):=  \sum_{n=0}^{N_k} \sum_{i=1}^k b^{(n)}_i \IndTX(t,\xi)\;.\]
If $i$ is of a smaller index set, as with $\Aink$ for example, then the $\Aink$ is taken to be zero at the missing indices.
\end{definition}
With this auxiliary mapping $\mathfrak{P}$ at hand, we can define the sequences of functions we will deal with. 
\begin{definition}\label{def:IDF_approx_func}
Let $k,\tau_k$ and $\bxn{n}_k$ as in the preceeding section. Let furthermore $T=N_k\tau_k$. 
We define the approximate IDF $X_k:\TJ \rightarrow [a,b]$ as 
\[ X_k(t,\xi):= \mathfrak{P}[\bx](t,\xi) \;, \]
and the approximate spatial derivative $\dexi X_k:\TJ\rightarrow \R$ as 
\[ \dexi X_k(t,\xi):= \mathfrak{P}[\delta \bx](t,\xi) \;.\]

Furthermore let us define $\olAk ,\ulAk ,\Ak:\TJ\rightarrow \R$ as 
\[ \olAk(t,\xi):= \mathfrak{P}[\overline{\bA}](t,\xi)  \qquad  \ulAk(t,\xi):=\mathfrak{P}[\underline{\bA}](t,\xi) \qquad \text{ and } \qquad \Ak(t,\xi):= \mathfrak{P}[\bA](t,\xi) \]
where $\bA=\overline{\bA}+\underline{\bA}$ which consist of $\overline{A}_{i,k}^{(n)}$ and $\underline{A}_{i,k}^{(n)}$ given by (c.f. the r.h.s. of \eqref{Euler-Lagrange})
\[  \overline{{ A}}_{i,k}^{(n)}= \frac{\dhX\left( \delta \xn{n}_{i} \right)-\dhX\left( \delta \xn{n}_{i-1}\right)}{\dk}  \quad \text{ and } \quad  \underline{{ A}}_{i,k}^{(n)}=-\dvX(\xn{n}_i) \;. \]

Finally the r.h.s. of the discrete Euler-Lagrange equation in \textit{Lemma \ref{lem:Euler-Lag_disc}} will receive a corresponding sequence of functions called discrete velocity, in reference to the role its corresponding part plays in the PDE \eqref{PDE_u}. It will be denoted as $\CdelH_k:\TJ\rightarrow \R$ and it is defined as 
\[ \CdelH_k(t,\xi):=\dcs(\Ak(t,\xi)) \;.\]
\end{definition}

\begin{remarks}\label{rem:X_k_de_X_k} 
Before we proceed some remarks are in order. 
\begin{enumerate}
\item $X_k$ is uniformly bounded in $L^q(\TJ)$ for every $q\in [1,\infty]$. Indeed $\abs{X_k(t,\xi)}\leq \max\{\abs{a},\abs{b}\}$ by construction. 

The Hölder-type estimate from \textit{Lemma \ref{cor:Hoel_Est}} implies a similar estimate for our $X_k$: 
\[ \norm{X_k(t_1)-X_k(t_2)}_{L^1(\J)}\leq (t_1-t_2+\tau_k)^{1/p} \frac{1}{\sqrt[p]{\alpha}}\left( \HXd(\bxn{0})-\underline \HXd \right)^{1/p}  \]
holds for all $t_1,t_2\in \T$. 

Furthermore, for each $t$ $X_k(t,\cdot)$ it is a sequence of monotonously increasing functions and therefore converges, up to a subsequence pointwise $\Lc$-a.e. to some monotonously increasing function (c.f. \cite[Lemma 3.3.1]{GradientFlows}). By dominated convergence theorem, that subsequence also converges in $L^q(J)$ for every $q\in [1,\infty)$ to a limit $X_*(t,\cdot)$. 

Finally, combining the $L^1(\J)$-convergence up to a subsequence at every $t$ with the Hölder-continuity of $X_*(t):\T\rightarrow L^1(\J)$, resulting from the Hölder-type estimate above, we receive convergence, again up to a subsequence, of $X_k$ in $L^1(\TJ)$ and consequently in $L^q(\TJ)$ by a diagonal argument. 

\item The sequence $\dexi X_k$ is uniformly bounded in $L^q(\TJ)$ for every $q\in [1,\infty]$, too, by the maximum-/minimum principle.

Lacking the pointwise convergence, it will be the aim of the following subsection to establish convergence of $\dexi X_k$ in measure to allow us to apply the dominated convergence theorem to prove strong convergence of this sequence.
\end{enumerate}
\end{remarks}

\subsection{Strong convergence of $\dexi X_k$}\label{sec:str_conv_delX}

We will use the definition of the set of functions of bounded variation on an open set $\Omega$, $BV(\Omega)$ and of the total variation $\Var(f,\Omega)$ of such a function as was introduced in \cite{DeCiccoFusco}. 

\subsubsection{Tightness w.r.t. $\F$}
Consider the functional $\F:L^1(\J)\rightarrow [0,\infty]$ defined as
\begin{align}\label{def:F}
\F(Y):=\begin{cases}
\Var(Y,\Jo) &\text{ if } Y\in BV(\Jo) \\
+\infty  &\text{ elsewhere.}
\end{cases}
\end{align}

\begin{lemma}\label{lem:Ros_Sav_Func}
The functional $\F$ defined in \eqref{def:F} is normal and coercive in the sense of \cite[(1.7 a-c)]{RossiSavare}
\end{lemma}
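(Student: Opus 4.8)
The plan is to verify the three Rossi-Savaré conditions (1.7a)--(1.7c) in turn, which amount to nonnegativity, lower semicontinuity, and coercivity (relative compactness of sublevels). I would treat \emph{normality} (the first two) and \emph{coercivity} (the third) as separate steps, relying throughout on classical properties of the one-dimensional total variation. The structural facts come for free: $\F$ takes values in $[0,\infty]$ by definition, it is proper since $\F(Y)=0$ for every constant $Y$ (these lie in $L^1(\J)$ because $\J$ is bounded), and its effective domain is exactly $BV(\Jo)$, so $\F$ is a genuine normal integrand off which it equals $+\infty$.

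The principal step for normality is lower semicontinuity with respect to strong $L^1(\J)$-convergence. Here I would use the dual representation
\[
\Var(Y,\Jo)=\sup\Bigl\{\int_0^1 Y(\xi)\,\varphi^\prime(\xi)\de\xi \;:\; \varphi\in C_c^1(\Jo),\ \norm{\varphi}_{L^\infty(\Jo)}\leq 1\Bigr\},
\]
which is valid for all $Y\in L^1(\J)$, the supremum being $+\infty$ precisely when $Y\notin BV(\Jo)$. For each fixed admissible $\varphi$ the map $Y\mapsto\int_0^1 Y\varphi^\prime\de\xi$ is continuous on $L^1(\J)$, since $\varphi^\prime\in L^\infty(\Jo)$ bounds it by $\norm{\varphi^\prime}_{L^\infty(\Jo)}\norm{Y}_{L^1(\J)}$. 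Being a pointwise supremum of a family of $L^1$-continuous functionals, $\F$ is therefore lower semicontinuous; in particular each sublevel $\{Y:\F(Y)\leq c\}$ is closed in $L^1(\J)$.

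For coercivity I would invoke that the embedding $BV(\Jo)\hookrightarrow L^1(\J)$ is compact: if a sequence $(Y_n)$ satisfies $\F(Y_n)\leq c$ and is bounded in $L^1(\J)$, then it is bounded in $BV(\Jo)$, and the Rellich-type compactness theorem for $BV$ on a bounded interval yields a subsequence converging strongly in $L^1(\J)$, whose limit lies in the closed sublevel by the lower semicontinuity just established. Combining closedness with this relative compactness gives compact sublevels, which is the coercivity demanded by (1.7c).

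The subtle point, and the one I expect to be the main obstacle, is that the total variation does \emph{not} by itself control the $L^1(\J)$-norm: constant functions have zero variation but arbitrarily large norm, so the bare sublevels $\{\F\leq c\}$ fail to be relatively compact in $L^1(\J)$ without an accompanying norm bound. The matter to get right is therefore that Rossi-Savaré's coercivity is to be read together with the a priori $L^\infty(\TJ)$-bound on the sequence $\dexi X_k$ supplied by the minimum/maximum principle (see \emph{Remark \ref{rem:X_k_de_X_k}(2)}), which confines the relevant sublevels to an $L^1(\J)$-bounded set on which the $BV$-compactness applies. Verifying that this is exactly the form of (1.7c) used in the later compactness argument is the one place requiring care; the remaining ingredients are the standard $BV$ facts cited above.
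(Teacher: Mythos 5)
Your argument is essentially the paper's: normality via lower semicontinuity of the total variation under strong $L^1$-convergence (the paper cites Giusti, Thm.~1.9, which is proved by exactly the dual representation you write down), and coercivity via the compactness of $BV(\Jo)$-bounded sets in $L^1(\Jo)$ (Giusti, Thm.~1.19). Your closing caveat is well taken and is in fact a gap in the paper's own proof, which asserts that the $BV$-norm of elements of the sublevel $\{\F\le c\}$ is bounded by $c$ — false for the $L^1$ part, since constants have zero variation — so the sublevels are only relatively compact after intersecting with an $L^1$-bounded set; as you note, this is harmless in the application because $\dexi X_k$ is uniformly bounded in $L^\infty$ by the minimum/maximum principle.
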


\begin{proof}\ 
\begin{enumerate}
\item \textit{normal:} $\Var(\cdot, \Jo)$ is lower semicontinuous w.r.t. $L^1$-convergence \cite[Thm. 1.9]{Giusti}.
\item \textit{coercive:} Consider $A_c:=\F^{-1}((-\infty,c])$. By definition of $\F$, $A_c\subset BV(\Jo)$ and the $BV$-norm of elements of $A_c$ is uniformly bounded by $c$. Consequently, by \cite[Thm. 1.19]{Giusti}, $A_c$ is $L^1(\Jo)$-strongly compact. 
\end{enumerate}
\end{proof}

The above lemma makes sure that $\F$ is suitable. Now we show that $\dexi X_k$ is tight w.r.t. $\F$.

\begin{lemma}\label{lem:tight_wrt_F}
Our sequence $\dexi X_k$ is tight w.r.t. $\F$, that is to say 
\begin{align}
\sup_{k\in \N} \int_0^T \F (\dexi X_k(t,\cdot)) \de t < \infty\;.
\end{align}
\end{lemma}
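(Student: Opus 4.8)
The goal is to bound $\sup_k \int_0^T \F(\dexi X_k(t,\cdot))\de t$, where $\F(Y) = \Var(Y,\Jo)$ when $Y \in BV$. The plan is to recognize that $\dexi X_k(t,\cdot)$ is, at each fixed $t$, a piecewise constant function taking the values $\delta \xn{n}_i$ for $i=0,\dots,k-1$ on the intervals $(\tfrac{i}{k},\tfrac{i+1}{k})$. For such a piecewise constant function the total variation is simply the sum of the absolute jumps across the breakpoints, i.e.
\begin{align*}
\Var(\dexi X_k(t,\cdot),\Jo)=\sum_{i=1}^{k-1}\abs{\delta \xn{n}_i-\delta \xn{n}_{i-1}}
\end{align*}
when $t\in((n-1)\tau_k,n\tau_k]$. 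The key observation is that $\delta \xn{n}_i-\delta \xn{n}_{i-1}=\tfrac{1}{k}\,\delta^2\xn{n}_i$, so each jump is controlled by the second symmetric difference quotient.

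Next I would invoke \textit{Lemma \ref{lem:min_max_2}}: under the prerequisites of \textit{Theorem \ref{Thm:Scnd_Thm}}, each $\delta^2 \xn{n}_i$ satisfies the uniform two-sided bound $\min\{\ddxlb,0\}\leq \delta^2 \xn{n}_i\leq \max\{\ddxub,0\}$, valid for all $i,n,k$. Hence there is a constant $C_0:=\max\{\abs{\ddxlb},\abs{\ddxub}\}$, independent of $i,n,k$, with $\abs{\delta^2 \xn{n}_i}\leq C_0$. Putting these together for $t$ in the $n$-th time slab,
\begin{align*}
\F(\dexi X_k(t,\cdot))=\sum_{i=1}^{k-1}\abs{\delta \xn{n}_i-\delta \xn{n}_{i-1}}=\dk\sum_{i=1}^{k-1}\abs{\delta^2\xn{n}_i}\leq \dk\,(k-1)\,C_0\leq C_0\;.
\end{align*}
This bound is uniform in $n$ and $k$, so integrating in time over $[0,T]$ gives $\int_0^T \F(\dexi X_k(t,\cdot))\de t\leq C_0\,T$, which is finite and independent of $k$, yielding the claimed tightness.

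The main subtlety — and the point I would be most careful about — is the correct identification of the total variation of a piecewise constant function with the sum of its jumps, including making sure the boundary intervals and the endpoints of $\Jo$ do not contribute spurious terms; since $\F$ is defined on the open interval $\Jo$ and $\dexi X_k(t,\cdot)$ is constant near each endpoint, only the $k-1$ interior breakpoints at $\xi=\tfrac{i}{k}$ for $i=1,\dots,k-1$ produce jumps, matching the $\delta^2$ indexing exactly. A second point worth stating explicitly is that the argument genuinely uses the hypotheses of \textit{Theorem \ref{Thm:Scnd_Thm}} (bounded $\delta^2\bxn{0}_k$) rather than merely those of \textit{Theorem \ref{Thm:Main_Thm}}; without the second maximum/minimum principle there is no reason for $\sum_i\abs{\delta^2 \xn{n}_i}$ to stay bounded as $k\to\infty$, since the telescoping/Hölder estimates from the previous section only control the first-order increments $\delta \xn{n}_i$ and not their discrete derivative. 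Everything else is an elementary counting estimate, so I expect no real obstacle beyond bookkeeping.
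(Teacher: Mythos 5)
Your reduction of the total variation to $\dk\sum_{i=1}^{k-1}\abs{\delta^2\xn{n}_i}$ is correct, but the way you bound that sum opens a genuine gap: you invoke \textit{Lemma \ref{lem:min_max_2}}, which is only available under the prerequisites of \textit{Theorem \ref{Thm:Scnd_Thm}} (constant potential, flux-limiting cost, and uniformly bounded second difference quotients of the initial data). The present lemma, however, belongs to the compactness argument for \textit{Theorem \ref{Thm:Main_Thm}}, whose hypotheses only control $\delta\bxn{0}_k$ and say nothing about $\delta^2\bxn{0}_k$. Your own closing remark --- that without the second maximum/minimum principle there is no reason for $\sum_i\abs{\delta^2\xn{n}_i}$ to stay bounded --- is therefore a misdiagnosis rather than a justification: if it were right, the lemma would be unprovable in the generality in which it is actually needed.

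The paper closes exactly this gap without any second-order assumption. By the first maximum/minimum principle (\textit{Lemma \ref{lem:min_max_princ}}) the values $\delta\xn{n}_i$ stay in a compact subset of $(0,\infty)$ on which $\ddhX$ is bounded away from zero, so the mean value theorem gives a constant $D$ with $\abs{\delta\xn{n}_{i+1}-\delta\xn{n}_i}\le D\abs{\dhX(\delta\xn{n}_{i+1})-\dhX(\delta\xn{n}_i)}$. After dividing by $\dk$, the right-hand side is (up to the term $\dvX(\xn{n}_i)$, which is bounded by continuity of $\dvX$ on $\I$) a multiple of $\abs{\Aink}$, and the weighted sum $\tau_k\dk\sum_n\sum_i\abs{\Aink}$ is finite uniformly in $k$ by the entropy estimate of \textit{Proposition \ref{prop:Entropy_ineq_conseq}}, since the $L^{p^\prime}$ bound there implies an $L^1$ bound on the finite measure space $\TJ$. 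Note this only bounds the \emph{time integral} of the total variation, not $\F(\dexi X_k(t,\cdot))$ pointwise in $t$ --- which is all the statement requires. Your argument does work, and even yields the stronger $t$-uniform bound $\F(\dexi X_k(t,\cdot))\le\max\{\abs{\ddxlb},\abs{\ddxub}\}$, but only in the restricted setting of \textit{Theorem \ref{Thm:Scnd_Thm}}; as a proof of the lemma as it is used in the main convergence argument, it does not go through.
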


\begin{proof}
We begin with 
\begin{align*}
\int_0^T \F(\dexi  X_k(t,\cdot)) \de t&=  \int_0^T \Var(\dexi  X_k(t,\cdot),\Jo) \de t \\
&= \tau_k \sum_{n=1}^{N_k} \Var(\dexi  X_k(n\tau_k,\cdot ),\Jo)\;.
\end{align*}
Now note that for every $t\in [0,T]$ we have that $X_k(t,\cdot):\J\rightarrow \R$ is a piecewise constant function and therefore its total variation can be calculated as the sum over the modulus of the jumps. This gives us
\begin{align*}
 \tau_k \sum_{n=1}^{N_k} \Var(\dexi  X_k(n\tau_k,\cdot ),\Jo)&=  \tau_k \sum_{n=1}^{N_k} \sum_{i=0}^{k-1} \abs{\delta \xn{n}_{i+1}-\delta \xn{n}_i } \\
 &=  \tau_k \sum_{n=1}^{N_k} \dk \sum_{i=0}^{k-1} \abs{\frac{\delta \xn{n}_{i+1}-\delta \xn{n}_i}{\dk} }\;.
\end{align*}
Now our goal is to utilize the estimates concerning $\Aink$ from \textit{Proposition \ref{prop:Entropy_ineq_conseq}}. To that end we see that by maximum-/minimum principle, positivity and monotony of $h_m^\pprime$ we receive a $D>0$ such that we have the lower bound 
\[ \abs{\delta \xn{n}_{i+1}-\delta \xn{n}_i }\geq D \abs{\dhX (\delta \xn{n}_{i+1} )-\dhX (\delta \xn{n}_{i} ) }\;.  \]
Incorporating this estimate we arrive at 
\begin{align*}
\int_0^T \F(\dexi  X_k(t,\cdot)) \de t
&\leq  D \tau_k \sum_{n=1}^{N_k} \dk \sum_{i=1}^{k-1} \abs{\Aink } + D \tau_k \sum_{n=1}^{N_k} \dk \sum_{i=0}^{k-1} \abs{ \dvX(\xn{n}_i )} \;.
\end{align*}
Now the first sums are bounded by \textit{Proposition \ref{prop:Entropy_ineq_conseq}} and the second sums are, by continuity of $\dvX$. 

This shows the sought for bound and therefore $\dexi X_k$ is tight w.r.t. $\F$. 
\end{proof}

\subsubsection{Equiintegrability w.r.t. the pairing $\g$.}

Now let $Y\in L^1(\J)$. Then we can define $I[Y](\xi):=a+\int_0^\xi Y(\zeta)\de \zeta$ where $I[Y]\in W^{1,1}(\Jo)$. 

Next we consider the functional $\g:L^p(\J)\times L^p(\J)\rightarrow [0,\infty)$ defined as 
\begin{align}\label{def:g_Savare}
\g(Y,Z):=\norm{I[Y]-I[Z]}_{L^1(\J)}\;.
\end{align}
Then the following lemma holds

\begin{lemma}\label{lem:Ros_Sav_Dist}
$\g$ is lower semicontinuous w.r.t. strong $L^p(\J)\times L^p(\J)$ topology.
\end{lemma}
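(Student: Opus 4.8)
The plan is to prove the stronger statement that $\g$ is in fact \emph{continuous} with respect to the strong $L^p(\J)\times L^p(\J)$ topology, from which lower semicontinuity follows immediately. The starting point is the observation that the primitive operator $I$ is affine-linear, so the additive constant $a$ cancels in the difference and
\[ I[Y](\xi)-I[Z](\xi)=\int_0^\xi \big(Y(\zeta)-Z(\zeta)\big)\de\zeta \;. \]
Hence $\g(Y,Z)=\norm{I[Y-Z]}_{L^1(\J)}$ depends on $(Y,Z)$ only through the bounded linear primitive applied to $Y-Z$, and the whole task reduces to continuity of a composition of elementary maps.

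First I would record that the primitive is a contraction from $L^1(\J)$ into $L^1(\J)$. For any $W\in L^1(\J)$ and every $\xi\in\J$ one has $\abs{\int_0^\xi W(\zeta)\de\zeta}\leq \norm{W}_{L^1(\J)}$; integrating this bound over $\xi\in\J$, which has unit length, gives
\[ \norm{I[Y]-I[Z]}_{L^1(\J)}\leq \norm{Y-Z}_{L^1(\J)}\;. \]
Next, since $\J$ carries unit Lebesgue measure and $p>1$, Jensen's inequality (equivalently Hölder) yields $\norm{f}_{L^1(\J)}\leq\norm{f}_{L^p(\J)}$, so convergence in $L^p(\J)\times L^p(\J)$ forces convergence of both components in $L^1(\J)$.

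Combining these two facts with the reverse triangle inequality settles the claim: for $(Y_n,Z_n)\to(Y,Z)$ in $L^p(\J)\times L^p(\J)$ one estimates
\begin{align*}
\abs{\g(Y_n,Z_n)-\g(Y,Z)} &\leq \norm{I[Y_n-Z_n]-I[Y-Z]}_{L^1(\J)} \\
&\leq \norm{(Y_n-Y)-(Z_n-Z)}_{L^1(\J)} \\
&\leq \norm{Y_n-Y}_{L^p(\J)}+\norm{Z_n-Z}_{L^p(\J)} \longrightarrow 0\;,
\end{align*}
so $\g$ is continuous and in particular lower semicontinuous. I do not expect any genuine obstacle here: the only point that requires attention is that $\g$ is a priori defined for $L^p$-arguments while $I$ needs $L^1$-regularity, and this is precisely what the finite-measure embedding $L^p(\J)\hookrightarrow L^1(\J)$ provides.
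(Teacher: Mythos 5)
Your proof is correct and follows essentially the same route as the paper: both arguments rest on the observation that the additive constant in $I$ cancels in the difference, so that $(Y,Z)\mapsto I[Y]-I[Z]$ is a continuous (indeed $1$-Lipschitz) linear map from $L^p(\J)\times L^p(\J)$ into $L^1(\J)$ via the elementary primitive bound and the embedding $L^p(\J)\hookrightarrow L^1(\J)$, after which (lower semi)continuity of the $L^1$-norm finishes the argument. Your version is slightly sharper in that it records an explicit Lipschitz estimate and hence full continuity of $\g$, whereas the paper passes through $L^p$-convergence of the primitives and invokes lower semicontinuity of the norm; the only cosmetic slip is writing $\g(Y,Z)=\norm{I[Y-Z]}_{L^1(\J)}$, which is off by the constant $a$, but this is harmless since only differences of primitives ever enter your estimates.
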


\begin{proof}
Let $Y_k\xrightarrow{L^p(\J)} Y$ and let $\hat{Y}=I[Y_k]$. Then $\hat{Y}_k(0)=0$ and $\partial_\xi \hat{Y}_k=Y_k$. This already implies that $\hat{Y}_k\xrightarrow{L^p(\J)}\hat{Y}$ where $\hat{Y}\in W^{1,p}(\Jo)$ with $\partial_\xi \hat{Y}=Y$.

Now consider two sequences $Y_k,Z_k$ converging in $L^p(\J)$ to $Y$ and $Z$ respectively. Then $I[Y_k]-I[Z_k]=I[Y_k-Z_k]-a$ converges in $L^p(\J)$ to $I[Y-Z]-a$ and the $L^1(\J)$-norm is lower semicontinuous w.r.t. $L^p(\J)$ convergence. 
\end{proof}

Finally we show the equiintegrability w.r.t. $\g$. 
\begin{lemma}\label{lem:equiint_wrt_g}
The sequence $\dexi  X_k$ is equiintegrable w.r.t. $\g$, that is to say 
\begin{align}
\lim_{r\searrow 0} \sup_{k\in \N} \int_0^{T-r} \g\left(\dexi  X_k(t+r),\dexi  X_k(t)\right) \de t =0\;.
\end{align}
\end{lemma}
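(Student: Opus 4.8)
The plan is to rewrite $\g$ through the primitive $I$, use that $I$ turns the piecewise‑constant difference quotient back into the piecewise‑affine nodal interpolant, and then feed the resulting nodal differences into the Hölder‑type bound of \textit{Corollary \ref{cor:Hoel_Est}}. First I would observe that $I[\dexi X_k(t)]$ is exactly the continuous, piecewise affine interpolant of the nodal values $\xn{n}_j$ at the grid points $\xi=j/k$, where $n=n(t)$ is the time index with $t\in((n-1)\tau_k,n\tau_k]$: integrating the piecewise‑constant forward differences telescopes to $I[\dexi X_k(t)](j/k)=a+\sum_{i=0}^{j-1}(\xn{n}_{i+1}-\xn{n}_i)=\xn{n}_j$. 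Writing $n'=n(t+r)$, it follows that $\g(\dexi X_k(t+r),\dexi X_k(t))$ is the $L^1(\J)$‑norm of the piecewise‑affine function with nodal values $\xn{n'}_j-\xn{n}_j$. Bounding the integral of a piecewise‑affine function by its nodal values on each subinterval and then invoking \textit{Corollary \ref{cor:Hoel_Est}} with $m_1=n$, $m_2=n'$ gives the pointwise estimate
\[ \g\bigl(\dexi X_k(t+r),\dexi X_k(t)\bigr)\;\le\;\dk\sum_{j=0}^{k}\abs{\xn{n'}_j-\xn{n}_j}\;\le\;C_0\bigl((n'-n)\tau_k+\tau_k\bigr)^{1/p}, \]
where $C_0:=\tfrac{1}{\sqrt[p]{\alpha}}\bigl(\sup_k\HXd(\bxn{0}_k)-\HXdb\bigr)^{1/p}$ is finite and $k$‑independent by hypothesis (2) of \textit{Theorem \ref{Thm:Main_Thm}}. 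Since $t\in((n-1)\tau_k,n\tau_k]$ and $t+r\in((n'-1)\tau_k,n'\tau_k]$, one has $(n'-n)\tau_k\le r+\tau_k$.

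The main obstacle is that the additive $\tau_k$ in this bound does \emph{not} vanish under $\sup_k$ (the supremum is attained at the largest step size), so a purely pointwise‑in‑$t$ estimate is insufficient and one must distinguish two regimes. If $r\ge\tau_k$, the pointwise bound already yields $\g\le C_0(r+2\tau_k)^{1/p}\le C_0(3r)^{1/p}$, hence $\int_0^{T-r}\g\de t\le T\,C_0(3r)^{1/p}$. If instead $r<\tau_k$, I exploit that $\dexi X_k$ is piecewise constant in time: the interval $(t,t+r]$ then contains at most one grid node $n\tau_k$, so $n'=n$ and $\g=0$ for all $t$ except those in a union of intervals of total measure at most $r\,N_k=rT/\tau_k$; on that exceptional set $n'=n+1$ and the estimate above gives $\g\le C_0(2\tau_k)^{1/p}$. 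Consequently
\[ \int_0^{T-r}\g\bigl(\dexi X_k(t+r),\dexi X_k(t)\bigr)\de t\;\le\;\frac{rT}{\tau_k}\,C_0(2\tau_k)^{1/p}\;=\;2^{1/p}C_0\,T\,r\,\tau_k^{1/p-1}\;\le\;2^{1/p}C_0\,T\,r^{1/p}, \]
where the last inequality uses $1/p-1<0$ together with $r<\tau_k$, so that $r\,\tau_k^{1/p-1}<r\cdot r^{1/p-1}=r^{1/p}$.

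Combining the two regimes, $\sup_{k\in\N}\int_0^{T-r}\g\de t\le C_1\,r^{1/p}$ for a constant $C_1$ independent of $k$ and $r$, which tends to $0$ as $r\searrow 0$; this is precisely the asserted equiintegrability. The only genuinely delicate point is the interplay between $r$ and $\tau_k$ just described, which removes the spurious $\tau_k$‑term uniformly in $k$; the remaining ingredients — the identity $I\circ\dexi=(\text{affine interpolation})$ and the application of the already‑established Hölder bound — are routine verifications.
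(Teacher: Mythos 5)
Your proof is correct and follows essentially the same route as the paper: both rewrite $\g$ through the primitive $I$, observe that $I[\dexi X_k(t)]$ interpolates the nodal values $\xn{n}_j$, reduce $\g\bigl(\dexi X_k(t+r),\dexi X_k(t)\bigr)$ to the quantity $\dk\sum_j\abs{\xn{n'}_j-\xn{n}_j}$ (the paper gets there with a factor $3$ via $\norm{X_k(t+r)-X_k(t)}_{L^1(\J)}$, you with constant $1$ via the piecewise-affine trapezoid bound), and then invoke the H\"older-type estimate of \textit{Corollary \ref{cor:Hoel_Est}}.

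The one place where you genuinely diverge is the final limit, and there your version is the more careful one. The paper's concluding display replaces the H\"older bound $(r+\tau_k)^{1/p}C$ by $r^{1/p}C$ without comment; since the $\sup_{k}$ is taken before $r\searrow 0$ and $\sup_k\tau_k=\tau_1$ does not vanish, a purely pointwise-in-$t$ estimate really is insufficient, exactly as you observe. Your two-regime argument — using the pointwise bound when $r\geq\tau_k$, and when $r<\tau_k$ exploiting that $\g$ vanishes except on a set of measure at most $rT/\tau_k$ where it is $O(\tau_k^{1/p})$, so that $r\,\tau_k^{1/p-1}\leq r^{1/p}$ — supplies the uniformity in $k$ that the paper's proof leaves implicit. (A cosmetic remark: the corollary's statement mixes up $t_1-t_2$ versus $t_2-t_1$ and $1/p$ versus $1/p'$ relative to its own proof; this does not affect your argument, since either exponent tends to zero.)
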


\begin{proof} 
First we establish an estimate concerning $I[\dexi  X_k]$ and $X_k$. Let $t\in [0,T-r]$ and $j(\xi)=\floor{\xi/d_k}$
\begin{align*}
&\norm{I[\dexi  X_k(t+r)]-I[\dexi  X_k(t)]}_{L^1(\J)}\\ 
&\qquad =\int_0^1 \abs{ \int_0^\xi \dexi  X_k(t+r,\zeta)-\dexi  X_k(t,\zeta)\de \zeta } \de \xi \\
&\qquad\leq \int_0^1 \abs{ X_k(t+r,\tfrac{j(\xi)}{k})-X_k(t,\tfrac{j(\xi)}{k})} \de \xi  \\
&\qquad \qquad + \frac{1}{\dk} \int_0^1 \int_{j(\xi)/k}^\xi \abs{X_k(t+r,\tfrac{j(\xi)+1}{k})-X_k(t+r,\tfrac{j(\xi)}{k}) -(X_k(t,\tfrac{j(\xi)+1}{k})-X_k(t,\tfrac{j(\xi)}{k}))   \de \zeta } \de \xi \\
&\qquad \leq \norm{X_k(t+r)-X_k(t)}_{L^1(J)}\\
&\qquad \qquad +\left( \sum_{j=0}^{k-1}  \abs{X_k(t+r,\tfrac{j+1}{k})-X_k(t,\tfrac{j+1}{k})} +\abs{(X_k(t+r,\tfrac{j}{k}) -X_k(t,\tfrac{j}{k}))}\right) \int_{jd_k}^{(j+1)d_k} \frac{\xi-\tfrac{j}{k}}{\dk} \de \zeta \\
&\leq 3 \norm{X_k(t+r)-X_k(t)}_{L^1(\J)}\;.
\end{align*}
Together with the H\"older-estimate from \textit{Corollary \ref{cor:Hoel_Est}}, this yields, for some constant $C$
\begin{align*}
\lim_{r\searrow 0} \sup_{k\in \N} \int_0^{T-r} \g\left(\dexi  X_k(t+r),\dexi  X_k(t)\right) \de t 
&\leq 3\lim_{r\searrow 0} \sup_{k\in \N} \int_0^{T-r}   r^{1/p}C \de t 
=0\;. \qedhere
\end{align*}
\end{proof}

\begin{proposition}
The sequence $\dexi  X_k$ is sequentially compact w.r.t. strong convergence in $L^{p^\prime}(\T, L^{p^\prime}(\J))$. 
\end{proposition}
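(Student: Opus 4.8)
The plan is to invoke the abstract compactness theorem of Rossi and Sav\'ar\'e \cite{RossiSavare}, for which the four preceding lemmas supply \emph{exactly} the required hypotheses. Taking $L^1(\J)$ as the reference space, Lemma \ref{lem:Ros_Sav_Func} provides a normal, coercive driving functional $\F$, Lemma \ref{lem:tight_wrt_F} verifies the tightness bound $\sup_k \int_0^T \F(\dexi X_k(t,\cdot))\de t<\infty$, Lemma \ref{lem:Ros_Sav_Dist} shows that the pairing $\g$ is a lower semicontinuous pseudo-distance, and Lemma \ref{lem:equiint_wrt_g} gives the integral equicontinuity $\lim_{r\searrow 0}\sup_k \int_0^{T-r}\g(\dexi X_k(t+r),\dexi X_k(t))\de t=0$. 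The abstract theorem then yields a subsequence of $\dexi X_k$ and a limit $Y_*$ that converges \emph{in measure with respect to $t\in\To$}, in the strong $L^1(\J)$-topology induced by the coercivity of $\F$ (the sublevel sets of $\F$ being strongly $L^1(\Jo)$-compact by the $BV$-compactness theorem \cite[Thm. 1.19]{Giusti}).

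Next I would reconcile the topologies. A priori $\g$ is only shown lower semicontinuous for the $L^p(\J)$-topology while $\F$ is coercive in $L^1(\J)$; the two are compatible here because, by the minimum/maximum principle (Remark \ref{rem:X_k_de_X_k}~(2)), the whole sequence $\dexi X_k$ is uniformly bounded in $L^\infty(\TJ)$, say by a constant $M$. On any set bounded in $L^\infty(\J)$ the $L^1(\J)$- and $L^{p^\prime}(\J)$-topologies coincide, since for $\abs{Y_k},\abs{Y}\le M$ one has $\norm{Y_k-Y}_{L^{p^\prime}(\J)}^{p^\prime}\le (2M)^{p^\prime-1}\norm{Y_k-Y}_{L^1(\J)}$, while $L^{p^\prime}$-convergence implies $L^1$-convergence on the finite-measure interval. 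Hence the convergence in measure furnished above is in fact convergence in measure with values in the strong $L^{p^\prime}(\J)$-topology.

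To upgrade this to strong convergence in $L^{p^\prime}(\T,L^{p^\prime}(\J))\cong L^{p^\prime}(\TJ)$, I would pass to a further subsequence along which $\dexi X_k(t,\cdot)\to Y_*(t,\cdot)$ in $L^{p^\prime}(\J)$ for $\Lc$-almost every $t$ (convergence in measure over $t$ always admits such a subsequence). The scalar functions $t\mapsto \norm{\dexi X_k(t,\cdot)-Y_*(t,\cdot)}_{L^{p^\prime}(\J)}^{p^\prime}$ then converge to $0$ for a.e. $t$ and are uniformly bounded by the constant $(2M)^{p^\prime}$ coming from the $L^\infty(\TJ)$-bound; since $\To$ has finite measure, dominated convergence gives $\int_0^T \norm{\dexi X_k(t,\cdot)-Y_*(t,\cdot)}_{L^{p^\prime}(\J)}^{p^\prime}\de t\to 0$, that is $\dexi X_k\to Y_*$ strongly in $L^{p^\prime}(\TJ)$, which is the asserted sequential compactness.

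The substantive content has already been discharged in the four lemmas, so the main obstacle is not a new estimate but the correct bookkeeping: matching the $L^1$-coercivity of $\F$ with the $L^p$-lower semicontinuity of $\g$, which is precisely what forces the use of the uniform $L^\infty$-bound from the minimum/maximum principle. It is worth emphasising that the \emph{strong} (rather than merely weak) character of the limit rests entirely on the coercivity of the total-variation functional $\F$; without the tightness estimate of Lemma \ref{lem:tight_wrt_F} one would only recover weak compactness, which would be insufficient for passing to the limit in the nonlinear right-hand side of the Euler--Lagrange equation in the sequel.
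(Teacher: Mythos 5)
Your proposal is correct and follows essentially the same route as the paper: both apply the Rossi--Savar\'e compactness theorem on the basis of the four preceding lemmas and then upgrade the resulting convergence in measure of the curves $t\mapsto\dexi X_k(t)$ to strong convergence in $L^{p^\prime}(\T,L^{p^\prime}(\J))$ via the uniform bound from the minimum/maximum principle and dominated convergence. The only hypothesis of the abstract theorem you do not address is the compatibility of $\g$ with $\F$ --- that $\g(Y,Z)=0$ forces $Y=Z$ for $Y,Z$ in sublevel sets of $\F$ --- which the paper verifies in one line by noting that $\norm{I[Y]-I[Z]}_{L^1(\J)}=0$ gives $I[Y]=I[Z]$ a.e.\ and hence $Y=Z$.
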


\begin{proof}
This will be an application of \cite[Thm. 2]{RossiSavare}. Note that $t\mapsto \dexi  X_k(t)$ is a sequence of $L^1(\J)$-valued functions. The functional $\F$ is normal and coercive in the sense of \cite[(1.7 a-c)]{RossiSavare} as was shown in \textit{Lemma \ref{lem:Ros_Sav_Func}} and \textit{Lemma \ref{lem:Ros_Sav_Dist}} shows that the substitute distance $\g$ is lower semicontinuous. Additionally, our sequence is tight w.r.t. $\F$ and it is equiintegrable w.r.t. $\g$ which was shown in \textit{Lemma \ref{lem:tight_wrt_F}} and \textit{Lemma \ref{lem:equiint_wrt_g}} respectively. 

We still have to show the compatibility of $\g$ with the functionals $\F$. So let $Y,Z\in L^1(\J)$ with $\F(Y),\F(Z)<\infty$. Then $\norm{I[Y]-I[Z]}_{L^1(\J)}=0$ implies that $I[Y]=I[Z]$ a.e. and consequently $Y=Z$.

This shows that Theorem 2 of \cite{RossiSavare} is applicable and we receive a subsequence of $\dexi  X_k$ that converges in measure as a curve $\dexi X_k:\T\rightarrow L^1(\J)$ to some limit curve $Y$. 

Furthermore, since $\dexi  X_k $ is dominated on $\T$ by the maximum-/minimum principle, we can enhance this result, possibly by passing to a subsubsequence, to strong convergence in $L^{p^\prime}(\T, L^{p^\prime}(\J))$.
\end{proof}

Finally we have to establish a connection between the cluster points of $\dexi X_k$ and $X_\ast$ the limit of $X_k$.

\begin{corollary}\label{cor:ident_dexi_X_*}
Up to a subsequence $\dexi X_k$ converges strongly w.r.t. $L^{p^\prime}([0,T]\times(0,1))$ to $\partial_\xi X_\ast$, $\hX(\dexi  X_k)$ to $ \hX(\partial_\xi X_\ast)$ and  $\dhX(\dexi  X_k)$ to $ \dhX(\partial_\xi X_\ast)$.
\end{corollary}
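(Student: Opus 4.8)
The plan is to combine the strong $L^{p'}$-convergence of $\dexi X_k$ established in the preceding proposition with the pointwise $L^1$-convergence $X_k \to X_\ast$ from \textit{Remarks \ref{rem:X_k_de_X_k}}, and then show that the limit curve $Y$ of $\dexi X_k$ must coincide with $\partial_\xi X_\ast$. First I would note that by the proposition, up to a subsequence, $\dexi X_k \to Y$ strongly in $L^{p'}(\TJ)$ for some $Y$. The task is to identify $Y = \partial_\xi X_\ast$ in the distributional sense. For this I would test against an arbitrary $\varphi \in C_c^\infty(\TJoo)$ and compare the discrete integration-by-parts identity relating $X_k$ and $\dexi X_k$ with its limit. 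Concretely, since $\dexi X_k$ is precisely the piecewise-constant forward difference quotient of $X_k$ in $\xi$, one has an exact summation-by-parts relation $\int_0^T\int_0^1 \dexi X_k \,\varphi \de\xi\de t = -\int_0^T\int_0^1 X_k\, (\delta^{-}\varphi) \de\xi\de t + (\text{boundary terms})$, where $\delta^{-}\varphi$ is the corresponding backward difference quotient of $\varphi$; the boundary terms vanish because $\varphi$ has compact support in $\Jo$.

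Next I would pass to the limit on both sides. On the left, strong (hence weak) $L^{p'}$-convergence gives $\int \dexi X_k\,\varphi \to \int Y\,\varphi$. On the right, the difference quotient $\delta^{-}\varphi \to \partial_\xi\varphi$ uniformly (since $\varphi$ is smooth with compact support), and $X_k \to X_\ast$ in $L^1$ and is uniformly bounded, so $\int X_k\,(\delta^{-}\varphi) \to \int X_\ast\, \partial_\xi\varphi$. Equating the limits yields
\begin{align*}
\int_0^T\int_0^1 Y\,\varphi \de\xi\de t = -\int_0^T\int_0^1 X_\ast\, \partial_\xi\varphi \de\xi\de t
\end{align*}
for all $\varphi \in C_c^\infty(\TJoo)$, which is exactly the statement that $Y = \partial_\xi X_\ast$ in the weak sense. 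Combined with the already-established strong convergence of $\dexi X_k$ to $Y$, this gives the first claim: $\dexi X_k \to \partial_\xi X_\ast$ strongly in $L^{p'}(\TJ)$.

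For the remaining two assertions I would exploit the continuity of $\hX$ and $\dhX$ together with the uniform two-sided bounds $\dxlb \,e^{-T\kappa} \le \dexi X_k \le \dxub$ coming from the minimum/maximum principle in \textit{Lemma \ref{lem:min_max_princ}}. On the compact interval determined by these bounds, $\hX$ and $\dhX$ are continuous, hence uniformly continuous and bounded. Passing to a further subsequence along which $\dexi X_k \to \partial_\xi X_\ast$ pointwise $\Lc$-a.e. (available from the strong $L^{p'}$-convergence), continuity gives $\hX(\dexi X_k) \to \hX(\partial_\xi X_\ast)$ and $\dhX(\dexi X_k)\to\dhX(\partial_\xi X_\ast)$ pointwise a.e., while the uniform boundedness supplies a constant dominating function; dominated convergence then upgrades this to strong $L^{p'}(\TJ)$-convergence.

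The main obstacle I anticipate is the clean handling of the discrete summation-by-parts step and the verification that the boundary contributions genuinely drop out, since $\dexi X_k$ lives on a staggered grid relative to $X_k$ and care is needed to match the index ranges of the forward/backward difference quotients; once the discrete identity is set up correctly, the passage to the limit is routine. A secondary point requiring attention is ensuring that all three convergence statements hold simultaneously along one common subsequence, which is arranged by extracting successively and relabeling.
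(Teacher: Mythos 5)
Your proposal is correct and follows essentially the same route as the paper: the paper's one-line proof delegates the identification of the strong limit of $\dexi X_k$ with $\partial_\xi X_\ast$ to the difference-quotient lemma in \textit{Appendix \ref{app:diff_quot}} (whose proof is exactly your summation-by-parts/test-function argument), and the convergence of $\hX(\dexi X_k)$ and $\dhX(\dexi X_k)$ to the Lipschitz-composition lemma in \textit{Appendix \ref{app:str_conv}} (whose proof is your pointwise-subsequence plus dominated-convergence argument, made possible by the same uniform two-sided bounds on $\dexi X_k$ from the minimum/maximum principle that you invoke). You have merely inlined the proofs of the two auxiliary results rather than citing them.
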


\begin{proof}
The first convergence follows by by \textit{Appendix \ref{app:diff_quot} } and the last two by the first convergence and \textit{Appendix \ref{app:str_conv} }.
\end{proof}

\subsection{The limit of the Euler-Lagrange equation}

The discrete Euler-Lagrange equation we received in \textit{Lemma \ref{lem:Euler-Lag_disc}} expressed in terms of approximate IDFs takes the form 
\begin{align}\label{eq:Eul-Lag_whole_IDF}
 \detau X_k= \CdelH_k
\end{align}
where we abbreviated the temporal backwards difference quotient $\detau X_k:\TJoc\rightarrow \R$ of $X_k$ by  
\[ \detau X_k(t,\xi) := \frac{X_k(t,\xi)-X_k(t-\tau_k,\xi)}{\tau_k}=\sum_{n=1}^{N_k} \sum_{i=1}^{k} \frac{\xn{n}_i-\xn{n-1}_i}{\tau_k} \IndTX(t,\xi)\;. \]

We want to prove a convergence result for the sequence $\CdelH_k$ next, by transferring the bounds found in \textit{Proposition \ref{prop:Entropy_ineq_conseq}} to our newly defined sequences. 

\begin{lemma}\label{lem:CdelH_bounded}
The sequences $\CdelH_k$, and consequently $\detau X_k$, converge, up to a subsequence, weakly in $L^p(\TJ)$ to the limit $\CdelHs\in L^p(\TJ)$. 
\end{lemma}

\begin{proof}
The result for $\detau X_k$ follows by \eqref{eq:Eul-Lag_whole_IDF}, so we only have to show the claim for $\CdelH_k$.

We receive the uniform bound on the $L^p$-norm by \textit{Proposition \ref{prop:Entropy_ineq_conseq} } as
\begin{align*}
\norm{\CdelH_k}_{L^p(\TJ)}^p
&= \tau_k\sum_{n=0}^{N_k}  \dk \sum_{i=1}^{k-1}  \abs{\dcs\left(\Aink\right)}^p \leq 2 \frac{1}{\alpha} \left(\sup_{k}\HXd(\bxn{0}_k)- \underline{\HXd}\right)\;.
\end{align*}
 
This shows that the sequence is uniformly bounded w.r.t. $L^p(\TJ)$-norm which in turn implies, by Banach-Alaoglu, weak$^*$-compactness and by equivalence of weak and weak$^*$ convergence in $L^p(\TJ)$ we receive the claim.
\end{proof}


\begin{lemma}\label{lem:A_k-Conv_p-Wass}
In the $p$-Wasserstein case, up to a subsequence, $\Ak$ converges weakly in $L^{p^\prime}(\TJ)$, to $\partial_\xi \dhX(\partial_\xi X_*)-\dvX(X_*)$ which we will abbreviate consistently as 
\begin{align*}
A_*:=\partial_\xi \dhX(\partial_\xi X_*)-\dvX(X_*)\;.
\end{align*} 
\end{lemma}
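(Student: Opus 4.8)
The plan is to split $\Ak=\olAk+\ulAk$ and treat the two summands by different means: the potential part $\ulAk$ converges \emph{strongly} and is identified directly by continuity, while the diffusion part $\olAk$ is only weakly precompact and must be identified through a discrete integration by parts against test functions. Adding the two limits then produces $A_*$.

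First I would dispose of the potential part. On each cell $((n-1)\tau_k,n\tau_k]\times(\tfrac{i-1}{k},\tfrac{i}{k}]$ the function $\ulAk$ takes the value $-\dvX(\xn{n}_i)$, while $X_k$ takes the value $\xn{n}_i$ there; hence $\ulAk$ coincides with $-\dvX(X_k)$ as piecewise constant functions on the same grid. By \textit{Remark \ref{rem:X_k_de_X_k}} we have $X_k\to X_*$ strongly in $L^{p^\prime}(\TJ)$ along a subsequence, and after passing to a further subsequence pointwise a.e. Since $\vX\in C^2$, the map $\dvX$ is continuous and bounded on the compact interval $\I$, so $\dvX(X_k)\to\dvX(X_*)$ a.e. and, by dominated convergence on the bounded set $\TJ$, strongly in $L^{p^\prime}(\TJ)$. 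Thus $\ulAk\to-\dvX(X_*)$ strongly, in particular weakly, in $L^{p^\prime}$.

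For the diffusion part, \textit{Proposition \ref{prop:Entropy_ineq_conseq}} bounds $\norm{\Ak}_{L^{p^\prime}(\TJ)}$ uniformly in $k$, and $\ulAk$ is bounded since $\dvX$ is bounded on $\I$; hence $\olAk=\Ak-\ulAk$ is bounded in $L^{p^\prime}(\TJ)$. As $L^{p^\prime}$ is reflexive for $p^\prime\in(1,\infty)$, Banach--Alaoglu yields a subsequence with $\olAk\rightharpoonup B$ weakly in $L^{p^\prime}$ for some limit $B$, and it remains to show $B=\partial_\xi\dhX(\partial_\xi X_*)$. The key observation is that $\olAk$ is exactly the backward $\xi$-difference quotient of the piecewise constant function $G_k:=\dhX(\dexi X_k)$, that is $\overline{A}^{(n)}_{i,k}=[G_k(\cdot,\tfrac{i}{k})-G_k(\cdot,\tfrac{i-1}{k})]/\dk$ on the corresponding cell. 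Fixing $\varphi\in C_c^\infty(\TJoo)$, I would write $\int_0^T\int_0^1\olAk\varphi$ as a sum over cells and perform Abel summation in the spatial index $i$; the boundary terms vanish because $\varphi$ is compactly supported in $\Jo$ and $k$ is large. This transfers the discrete difference onto $\varphi$, producing $-\int_0^T\int_0^1 G_k\,D_k\varphi$, where $D_k\varphi$ is the cell-averaged discrete forward $\xi$-difference quotient of $\varphi$, which converges to $\partial_\xi\varphi$ strongly in $L^{p}(\TJ)$ by the smoothness of $\varphi$. By \textit{Corollary \ref{cor:ident_dexi_X_*}} we have $G_k=\dhX(\dexi X_k)\to\dhX(\partial_\xi X_*)$ strongly in $L^{p^\prime}$, so the strong--strong pairing gives $\int\int G_k\,D_k\varphi\to\int\int\dhX(\partial_\xi X_*)\,\partial_\xi\varphi$. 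Comparing with $\int\int\olAk\varphi\to\int\int B\varphi$ yields
\[ \int_0^T\int_0^1 B\,\varphi\de\xi\de t = -\int_0^T\int_0^1 \dhX(\partial_\xi X_*)\,\partial_\xi\varphi\de\xi\de t \qquad\text{for all }\varphi\in C_c^\infty(\TJoo), \]
which by definition of the weak derivative means $B=\partial_\xi[\dhX(\partial_\xi X_*)]\in L^{p^\prime}$. Summing the two parts gives $\Ak=\olAk+\ulAk\rightharpoonup\partial_\xi\dhX(\partial_\xi X_*)-\dvX(X_*)=A_*$.

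I expect the main obstacle to be the identification step rather than the compactness: one must carry out the discrete summation by parts with correct index bookkeeping, confirm that the boundary contributions drop out via the compact support of $\varphi$, and check that $D_k\varphi$ converges strongly enough to pair against $G_k$. The crucial input making this work is the \emph{strong} $L^{p^\prime}$ convergence of $\dhX(\dexi X_k)$ furnished by \textit{Corollary \ref{cor:ident_dexi_X_*}}; without it the nonlinear term $\dhX(\dexi X_k)$ could not be passed to the limit, and this is precisely why the preceding strong-convergence machinery for $\dexi X_k$ was developed.
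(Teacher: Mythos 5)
Your proposal is correct and follows essentially the same route as the paper: split $\Ak=\olAk+\ulAk$, get strong convergence of the potential part from the strong convergence of $X_k$, and identify the weak limit of $\olAk$ as the weak $\xi$-derivative of $\dhX(\partial_\xi X_*)$ using its uniform $L^{p^\prime}$ bound together with the strong convergence of $\dhX(\dexi X_k)$ from \textit{Corollary \ref{cor:ident_dexi_X_*}}. The only difference is presentational: the paper delegates your explicit discrete summation by parts against test functions to the auxiliary difference-quotient lemma of \textit{Appendix \ref{app:diff_quot}}, and handles $\dvX(X_k)$ via the Lipschitz-composition lemma of \textit{Appendix \ref{app:str_conv}} rather than dominated convergence.
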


\begin{proof}
Using the bound on $\Ak$ from \textit{Proposition \ref{prop:Entropy_ineq_conseq}} we receive
\begin{align*}
\norm{\Ak}_{L^{p^\prime}(\TJ)}^{p^\prime} 
&= \tau_k \sum_{n=1}^{N_k} \dk \sum_{i=1}^{k-1} \abs{\Aink}^{p^\prime} \leq 2\sqrt[p-1]{\beta} \left(\sup_{k}\HXd(\bxn{0}_k)- \underline{\HXd}\right)\;.
\end{align*}
With this bound at hand and Banach-Alaoglu we arrive at a weakly convergent subsequence (w.r.t. $L^{p^\prime}(\TJ)$). 

In order to identify the limit, we use $\Ak=\olAk+\ulAk$ from \textit{Definition \ref{def:IDF_approx_func}}. 

$\ulAk$ is the Lipschitz-function $\dvX$ applied to $X_k$ and with \emph{Appendix \ref{app:str_conv} } we receive strong convergence in $L^{p^\prime}(\TJ)$ implying weak convergence. 

Now 
$$\Ak - \olAk=\ulAk= \frac{\dhX(\dexi X_k(t,\xi+\dk))-\dhX(\dexi X_k(t,\xi))}{\dk}=\dexi \dhX(\dexi X_k(t,\xi)) $$ 
converges weakly w.r.t. $L^{p^\prime}(\TJ)$ and is consequently bounded therein. The claim now follows from \emph{Appendix \ref{app:diff_quot} }.
%
%
\end{proof}

To finally receive the limit of the Euler-Lagrange equation \eqref{eq:Eul-Lag_whole_IDF}, we identify the limit of $\detau X_k$ with the weak derivative w.r.t. $t$ of the limit $X_*$. This will then give us the equation 
\[ \partial_t X_* = \CdelHs\;. \] 

Indeed, we can apply \emph{Appendix \ref{app:diff_quot} } to it and receive the limit of our Euler-Lagrange equation
\begin{align}\label{eq:Eul-Lag_Limit}
\partial_t X_* = \CdelHs
\end{align}
which holds on $\TJoo$.

\section{Identification of the nonlinear limit}
In the preceeding section we have established some convergence results for $X_k$, $\dexi X_k$, $\Ak$ and $\CdelH_k$, some only up to subsequences. In this section we assume that $X_k$ etc. are already non-relabelled subsequences, such that all of the above convergence results hold. 

So far we have shown that our Euler-Lagrange equation admits a limit, but the limit of the r.h.s. is, by nonlinearity of $\dcs$ still not identified. To identify this limit and therefore receive our IDF $X_\ast$ as a weak solution of \eqref{PDE_u} in terms of IDF we will make use of a Browder-Minty argument w.r.t. the monotone $\dcs$. 

%

The monotonicity inequality we want to consider will be 
\begin{align}
0 \leq (\dcs ( \Ak(t,\xi) ) - \dcs (\As(t,\xi)-\varepsilon \psi(\xi) ))(\Ak(t,\xi) - (\As(t,\xi)-\varepsilon \psi(\xi)))
\end{align}
where $\varepsilon>0$ and $\psi \in C^\infty(\J)$. 
Since the above inequality holds for every $(t,\xi)\in \TJoo$ we can integrate it weighted in time with some non-negative $u\in C_c^\infty((0,T))$ and making use of the abbreviation $\CdelH_k$ to arrive at 
\begin{align}\label{ineq:Brow_Min_monot}
0\leq \int_0^T \int_0^1 (\CdelH_k(t,\xi) - \dcs (\As(t,\xi)-\varepsilon \psi(\xi) ))(\Ak(t,\xi) - (\As(t,\xi)-\varepsilon \psi(\xi))) u(t) \de \xi \de t \;.
\end{align}

In order to prove the limit in \eqref{ineq:Brow_Min_monot}, we will split the expression up by its bilinearity. Plugging together the results and using subadditivity of the $\limsup$ then shows the sought for estimate
\begin{align}\label{ineq:Browder-Minty}
\begin{split}
\limsup_{k\rightarrow \infty} &\int_0^T \int_0^1 (\CdelH_k(t,\xi) - \dcs (\As(t,\xi)-\varepsilon \psi(\xi) ))(\Ak(t,\xi) - (\As(t,\xi)-\varepsilon \psi(\xi))) u(t) \de \xi \de t \\ 
&\leq \int_0^T \int_0^1 [\CdelHs(t,\xi) - \dcs (\As(t,\xi)-\varepsilon \psi(\xi) )]\;\varepsilon \psi(\xi) u(t) \de \xi \de t \;.
\end{split}
\end{align}

\subsection{ }\label{sec:id_the_limit_I}

We begin with splitting up $\Ak=\olAk-\ulAk$ 
\begin{align*}
\int_0^T \int_0^1& \CdelH_k(t,\xi) \Ak(t,\xi)  u(t) \de \xi \de t \\
&= \int_0^T \int_0^1 \CdelH_k(t,\xi) \olAk(t,\xi)  u(t) \de \xi \de t-\int_0^T \int_0^1 \CdelH_k(t,\xi) \ulAk(t,\xi)  u(t) \de \xi \de t\;.
\end{align*}
The second integral can be treated easily, since $\ulAk\rightarrow \dvX(X_*(t,\xi))$ strongly w.r.t. $L^{p^\prime}(\TJ)$ by \textit{Lemma \ref{lem:A_k-Conv_p-Wass}} and with \textit{Lemma \ref{lem:CdelH_bounded}} we receive
\begin{align*}
\lim_{k\rightarrow \infty} \int_0^T \int_0^1 \CdelH_k(t,\xi) \ulAk(t,\xi)  u(t) \de \xi \de t = \int_0^T \int_0^1 \CdelH_*(t,\xi) \dvX(X_*(t,\xi))  u(t) \de \xi \de t
\end{align*}
right away.

To deal with the remaining integral will require more work. We begin by applying the Euler-Lagrange equation \eqref{eq:Eul-Lag_whole_IDF} to the integral to receive 
\begin{align*}
\int_0^T \int_0^1 \CdelH_k(t,\xi) \olAk(t,\xi)  u(t) \de \xi \de t &= \int_0^T \int_0^1 \detau X_k(t,\xi) \olAk(t,\xi)  u(t) \de \xi \de t\;.
\end{align*}

We begin with exploiting the piecewise constant structure
\begin{align*}
\int_0^T \int_0^1& \detau X_k(t,\xi) \olAk(t,\xi)  u(t) \de \xi \de t \\ 
&= \int_0^T \int_0^1 \sum_{n=1}^{N_k} \sum_{i=1}^{k-1} \frac{\xn{n}_i - \xn{n-1}_i}{\tau_k} \frac{\dhX(\delta \xn{n}_i) - \dhX(\delta \xn{n}_{i-1})}{\dk} \IndTX(t,\xi)  u(t) \de \xi \de t \\
&= \dk \sum_{n=1}^{N_k} \sum_{i=1}^{k-1} \frac{\xn{n}_i - \xn{n-1}_i}{\tau_k} \frac{\dhX(\delta \xn{n}_i) - \dhX(\delta \xn{n}_{i-1})}{\dk} \int_{(n-1)\tau_k}^{n\tau_k}  u(t)  \de t \;.
\end{align*}
We will abbreviate the integral in the last line as  $Z_n^u:=\int_{(n-1)\tau_k}^{n\tau_k}  u(t)  \de t$.

Now we apply summation by parts w.r.t. $i$, a convexity estimate and summation by parts w.r.t. $n$ to receive 
\begin{align*}
\sum_{n=1}^{N_k} \sum_{i=1}^{k-1}& \frac{\dhX(\delta \xn{n}_i) - \dhX(\delta \xn{n}_{i-1})}{\dk} \frac{\xn{n}_i - \xn{n-1}_i}{\tau_k}  \dk  Z_n^u
&\leq -\sum_{n=1}^{N_k} \sum_{i=1}^{k-1}   \frac{h_m(\delta \xn{n}_i) - h_m(\delta \xn{n-1}_i)}{\tau_k}  \dk  Z_n^u \\
&=\int_0^T \int_0^1   \hX(X_k(t,\xi))  \frac{u(t+\tau_k)-u(t)}{\tau_k} \de \xi \de t
\end{align*}
where the inequality is justified by the convexity of $h_m$. 

Now as was shown, $\hX(X_k)$ converges strongly to $h_m(\partial_\xi X_*)$ and since $u\in C_c^\infty((0,T))$ the difference quotient converges uniformly in $k$. Consequently we receive in the limit
\begin{align*}
\lim_{k\rightarrow \infty}\int_0^T \int_0^1   \hX(X_k(t,\xi))  \frac{u(t+\tau_k)-u(t)}{\tau_k} \de \xi \de t &=\int_0^T \int_0^1 \hX(\dexi X_*(t,\xi)) \partial_t u(t) \de \xi \de t\;.
\end{align*}
Now we would want to get the partial differentiation w.r.t. $t$ back to the $h_m$ again to receive by chain rule a possibility to get our $\CdelH_*$ back. Unfortunately the expression $h_m(\dexi X_*)$ does not have enough regularity to allow for that. But we can help ourselves by undoing the limit of just the difference quotient again.
\begin{align*}
\int_0^T \int_0^1 \hX(\dexi X_*(t,\xi))& \partial_t u(t) \de \xi \de t 
=\lim_{k\rightarrow \infty} \int_0^T \int_0^1 \hX(\dexi X_*(t,\xi)) \frac{u(t)- u(t-\tau_k)}{\tau_k} \de \xi \de t \\
&= \lim_{k\rightarrow \infty} -\int_0^T \int_0^1 \frac{\hX(\dexi X_*(t+\tau_k,\xi))-\hX(\dexi X_*(t,\xi))}{\tau_k} u(t) \de \xi \de t \\
&\leq  \lim_{k\rightarrow \infty} -\int_0^T \int_0^1 \dhX (\dexi X_*(t,\xi))\frac{\dexi X_*(t+\tau_k,\xi)-\dexi X_*(t,\xi)}{\tau_k} u(t) \de \xi \de t \\
&=\lim_{k\rightarrow \infty} \int_0^T \int_0^1 \partial_\xi \dhX (\dexi X_*(t,\xi))\frac{X_*(t+\tau_k,\xi)-X_*(t,\xi)}{\tau_k} u(t) \de \xi \de t  \\
&=\int_0^T \int_0^1 \partial_\xi \dhX (\dexi X_*(t,\xi))\CdelH_*(t,\xi) u(t) \de \xi \de t \;.
\end{align*}
where we used summation by parts w.r.t. $n$ disguised in $t+\tau_k$ and convexity of $h_m$ as well as integrate by parts. In the last step we used  $\partial_\xi \dhX (\dexi X_*(t,\xi)) u(t)\in L^{p^\prime}(\TJ)$ and $X_*\in W^{1,p}(\TJoo)$ which implies a strict enough convergence of the difference quotients to pass to the limit. 


\subsection{ }
The inequality we want to show next is 
\begin{align*}
\liminf_{k\rightarrow \infty} \int_0^T \int_0^1& \dcs\big(A(t,\xi) -\varepsilon \psi(\xi)\big) (\Ak(t,\xi) -(A(t,\xi) - \varepsilon \psi(\xi))) \de \xi \de t \\
&\geq \int_0^T \int_0^1 \dcs\big(A(t,\xi) -\varepsilon \psi(\xi)\big)  \varepsilon \psi(\xi) \de \xi \de t\;.
\end{align*}

Recall $\Ak$ converges w.r.t. $L^{p^\prime}(\TJ)$ weakly to $\As$ (\textit{Lemma \ref{lem:A_k-Conv_p-Wass}}). So we have established the sought for inequality as equation for the limit as soon as we can show $\dcs\big(\As -\varepsilon \psi\big)\in L^p(\TJ)$. 

Indeed we have by $\frac{1}{\sqrt[p-1]{\alpha}}\abs{r}^{\frac{1}{p-1}}\geq \abs{\dcs(r)}$ (c.f. proof of \textit{Proposition \ref{prop:Entropy_ineq_conseq}}) 
\begin{align*}
\int_0^T\int_0^1 \abs{\dcs\big(\As(t,\xi) -\varepsilon \psi(\xi)\big)}^p \de \xi \de t &\leq \frac{1}{\sqrt[p-1]{\alpha}} \norm{\As+\varepsilon \psi}_{L^{p^\prime}(\TJ)}^{p^\prime} \;.
\end{align*}
Now since $\psi$ is a test-function, $\norm{\As-\varepsilon \psi}_{L^{p^\prime}(\TJ)}^{p^\prime} <\infty $ and consequently $\dcs\big(\As -\varepsilon \psi\big)\in L^p(\TJ)$.

\subsection{ }
The final inequality we will show to identify the limit $\CdelH_*$ of $\CdelH_k$ is 
\begin{align*}
\liminf_{k\rightarrow \infty} \int_0^T \int_0^1& \CdelH_k(t,\xi) (\As(t,\xi)-\varepsilon\psi(\xi)) \de \xi \de t\geq \int_0^T \int_0^1\CdelHs(t,\xi) (\As(t,\xi)-\varepsilon\psi(\xi)) \de \xi \de t \;.
\end{align*}
Recall \textit{Lemma \ref{lem:CdelH_bounded}} which states that $\CdelH_k\rightarrow \CdelHs$ weakly in $L^p(\TJ)$. Again we receive the sought for limit inferior estimate as an equation for the limit since we already confirmed that $\As-\varepsilon\psi\in L^{p^\prime}(\TJ)$.

\subsection{Identification of the limit $\CdelHs$}
Plugging together the above calculations we arrive at \eqref{ineq:Browder-Minty}. Combining this with \eqref{ineq:Brow_Min_monot} we receive 
\begin{align*}
0 \leq \int_0^T \int_0^1 [\CdelHs(t,\xi) - \dcs (\As(t,\xi)-\varepsilon \psi(\xi) )]\;\varepsilon \psi(\xi) u(t) \de \xi \de t
\end{align*}
which holds for every $\varepsilon>0$, $u\in C_c^\infty((0,T))$ with $u\geq 0$ and $\psi\in C_c^\infty((0,1))$. Dividing by $\varepsilon>0$ and exchanging $\psi \leftrightarrow -\psi$ we receive the equation 
\begin{align*}
0 = \int_0^T \int_0^1 [\CdelHs(t,\xi) - \dcs (\As(t,\xi)-\varepsilon \psi(\xi) )] \psi(\xi) u(t) \de \xi \de t\;.
\end{align*}
Finally we send $\varepsilon\searrow 0$ and receive, since $\varepsilon \psi$ converges uniformly to zero for $\varepsilon\searrow 0$, 
\begin{align*}
0 = \int_0^T \int_0^1 [\CdelHs(t,\xi) - \dcs (\As(t,\xi))] \psi(\xi) u(t) \de \xi \de t
\end{align*}
which then implies the weak formulation of \eqref{eq:PDE_in_X} in \emph{Theorem \ref{Thm:Main_Thm} } by \eqref{eq:Eul-Lag_Limit}.

\section{Solution of our PDE in the sense of IDFs and the flux-limiting case}
\subsection{Solution of our PDE}
We will now show that the limit $X_*$ is indeed a solution to our PDE \eqref{PDE_u} in terms of IDF. To that end, we still have to show that $X_*(t)$ is indeed an inverse distribution function for every $t\in [0,T]$, i.e. it is non-decreasing and $X_*(t,0)=a$ as well as $X_*(t,1)=b$ holds for all $t\in \T$. 

Consider the sequence $X_k(t)$.We know $X_k(t)$ is non-decreasing and $X_k(t,0)=a$, $X_k(t,1)=b$ holds. These properties survive the pointwise limit, so $X_*(t)$ is again non-decreasing and $X_*(t,0)=a$ as well as $X_*(t,1)=b$ holds.

Furthermore we have to show that the initial data are assumed $X_*(t)\rightarrow X^{(0)}_*$ for $t\rightarrow 0$. This will be a consequence of $X_*$ being $1/p$-H\"older continuous as a curve in $L^1(\J)$ which, in turn, will be a consequence of \textit{Corollary \ref{cor:Hoel_Est} }. Indeed, we have for every $k$ and $t_1,t_2\in \T$ where $t_1>t_2$
\begin{align*}
\norm{X_k(t_1)-X_k(t_2)}_{L^1(\J)}&= \dk \sum_{i=0}^{k} \abs{\xn{m_2}_i - \xn{m_1}_i} \leq C (t_1-t_2)^{1/p}
\end{align*}
where $C$ does not depend on $k$, since we assumed $\sup_{k}\HXd(\bxn{0}_k)$ to be finite. The strong convergence $X_k(t)\rightarrow X_*(t)$ in $L^1(\J)$ established in \textit{Remark \ref{rem:X_k_de_X_k}} now yields the H\"older-continuity of $X_*$ and this implies in particular $\lim_{t\searrow 0} X_*(t)=X^{(0)}$ in the $L^1(\J)$ sense.

\subsection{The flux-limiting case}
To prove \textit{Theorem \ref{Thm:Scnd_Thm}}, we will show that, given the prerequisites of \textit{Theorem \ref{Thm:Scnd_Thm}}, the flux-limiting cost functions can be regularized to be actually $p$-Wasserstein cost without changing the minimizers of our algorithm steps. 

Let us assume $\c$, $\bxn{0}_k$ are as in \textit{Lemma \ref{lem:min_max_2}}. Then, by the very same lemma, the bounds for $\delta^2 \bxn{0}_k$ hold for all $\delta^2 \bxn{n}_k$, too. As a first consequence, this yields finite bounds from above and below for $\Aink$. Indeed, by the properties of $\hu$ and consequently $\hX$ we receive $ \ddxlb\cdot \ddhX\left(\dxub\right)\leq \Aink\leq  \ddxub\cdot \ddhX\left(\dxlb\right) $ if $\ddxlb<0<\ddxub$. If it is the case that $\ddxlb$ and $\ddxub$ lie on the same side of zero, one of the bounds in the interval has to be replaced with zero. 

Now since $\dcs$ is monotonously increasing, we receive bounds for the discrete temporal backward difference 
\[ \frac{\xn{n}_i-\xn{n-1}_i}{\tau_k}= \dcs(\Aink)\in \left[\dcs\left(\ddhX(\dxub)\ddxlb\right),\dcs\left( \ddhX(\dxlb)\ddxub \right)\right] \]
again with the appropriate corrections if $\ddxlb$ and $\ddxub$ lie on the same side of zero. So to summarize, there are uniform bounds $ \underline{C}>-\ls$ and $\overline{C}<\ls $ such that for every $k,n$ our minimization problem in the algorithm can be narrowed down to a minimization over $\bx$ such that $ \frac{\xn{n}_i-\xn{n-1}_i}{\tau_k}\in [\underline{C},\overline{C}]$. This allows us in particular to regularize the flux-limiting cost $\c$ outside of $[\frac{\underline{C}-\ls}{2},\frac{\overline{C}+\ls}{2}]$ to satisfy \aspa\ and \aspb\ (\as\ holds already by assumption) showing that the results concerning $p$-Wasserstein-like soct hold for flux-limiting cost as well, as soon as we have initial data that satisfies the additional regularity assumption from \textit{Theorem \ref{Thm:Scnd_Thm}}.

\section{Numerical experiments}

\subsection{Implementation}

We perform the minimization of $F:\bx\mapsto \Phi(\tau;\bx^{(n-1)},\bx)$ in Algorithm \eqref{Minim_X} by a damped Newton scheme
\begin{align*}
p_j & = -HF(\bx_j)^{-1}\nabla F(\bx_j) \\
\bx_{j+1} & = \bx_j + h_j p_j, \quad j=0,1,\ldots
\end{align*}
for the gradient of $F$.  The choice of the step size $h_j$ in each step is realized by an Armijo-type heuristic, i.e.\ we choose $h_j$ as the largest value from the sequence $\{1,\frac12,\frac14,\ldots\}$ for which
\begin{align*}
\bx_j+h_j p_j & \in \{\bx\in \R^{k+1}\mid a=x_0<x_1<\dots<x_k=b, 
| x_i - x_i^{(n-1)}| \leq \tau\},
\end{align*}
i.e.\ such that the next iterate $\bx_{j+1}$ is still an IDF and has a well defined optimal transport distance in the flux limited case.

A Matlab code for the following experiments is given in the Appendix.

\subsection{Linear diffusion}

We start with the case $m=1$, i.e.\ the case of the Boltzmann entropy for the internal energy potential. All experiments have been carried out with $k=1000$ grid points and time step $\tau=0.01$. Figs.~\ref{fig:evol_p=7} and \ref{fig:evol_p=7_left} show the evolution of an initial distribution with localized support over the time interval $[0,2]$ for Wasserstein costs with $p=7$, while Fig.~\ref{fig:evol_rel} shows the same evolution for the flux limited case with $c$ given by \eqref{eq:c_rel}, $\gamma=1$.

In Fig.~\ref{fig:convergence} (left), we depict the $L_1$-error of the computed density in dependence of the mesh size.  The error is estimated by computing the exact $L_1$ difference to a reference solution on a grid with 10000 points, the same initial condition as for Fig.~\ref{fig:evol_p=7} has been used.  The experiments suggest that the error decreases linearly with the grid size. To the right in this figure, the $L_1$-error of the computed density (on a grid of 1000 points) in dependence of the time step is plotted.  Again, we estimate this error by comparing to a reference solution, here with time step $\tau=0.001$.  The result clearly suggests a linear dependence of this error on the time step. 

\begin{figure}[h]
\centering
\includegraphics[width=0.42\textwidth]{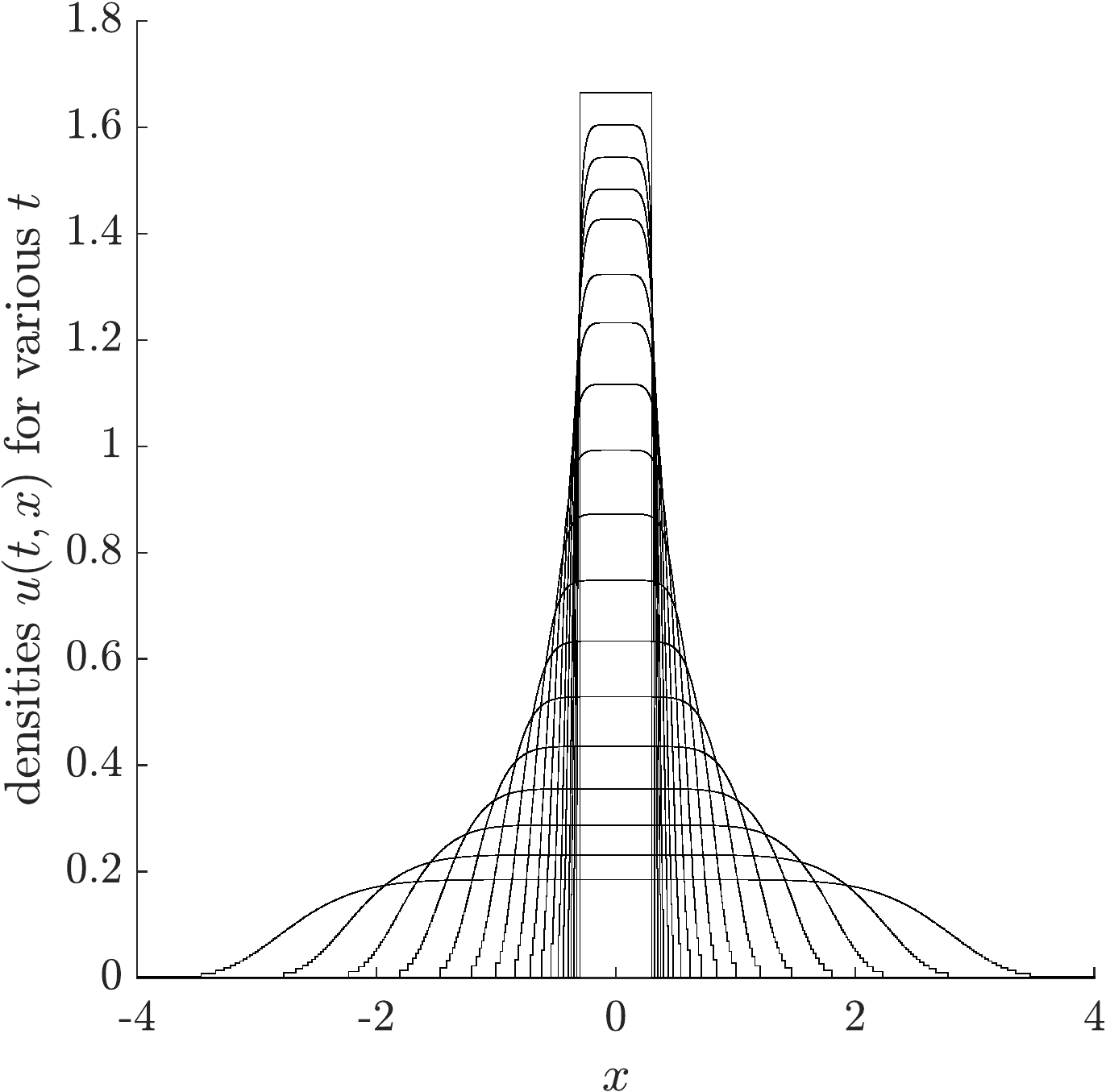}
\includegraphics[width=0.42\textwidth]{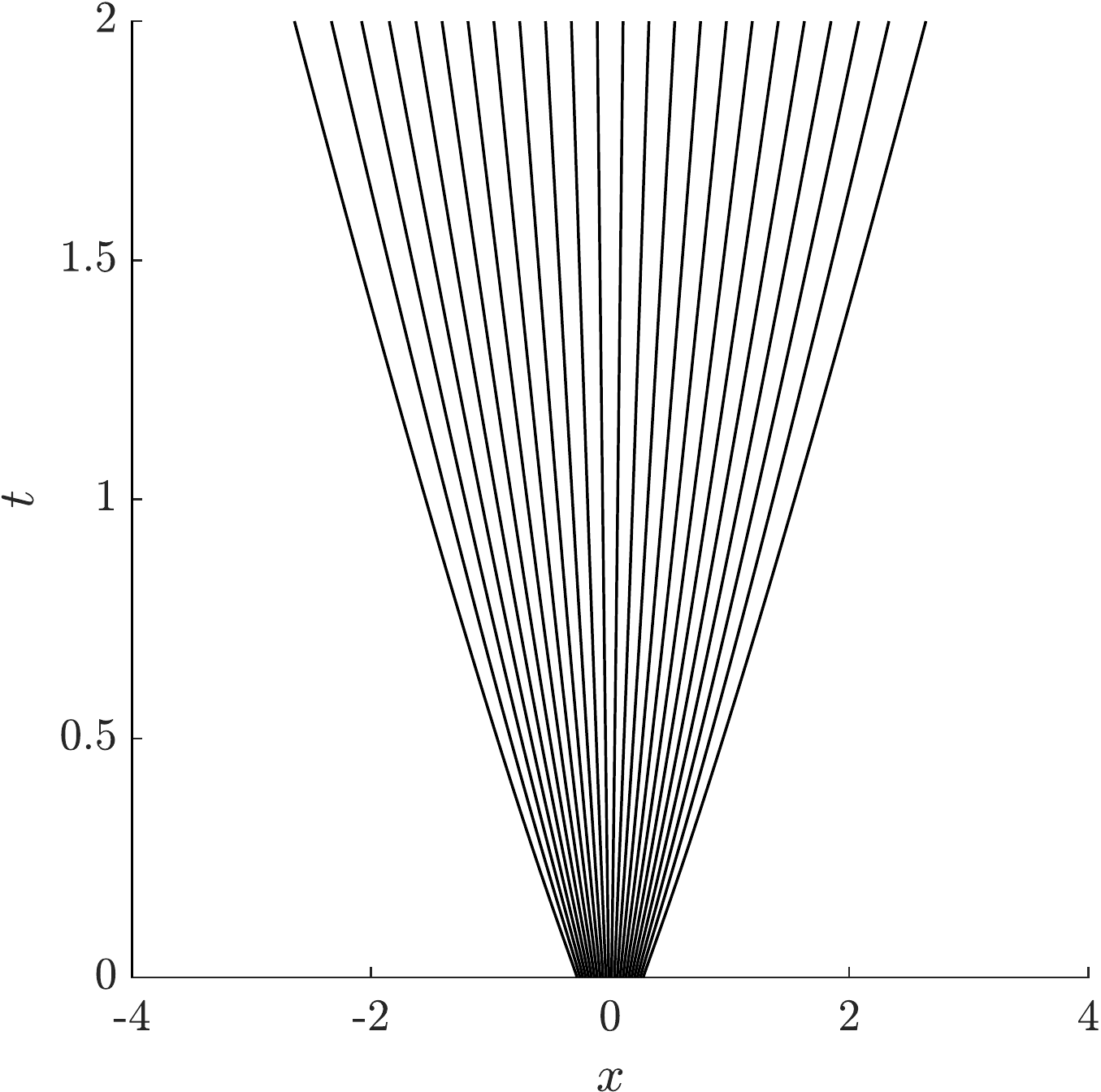}
\caption{Experiment: p-Wasserstein cost, linear diffusion. Left: Approximate densities $u(t,\cdot)$ at $t=0.01\cdot 10^k$, $k=0,0.12,0.24,\ldots,\log_{10}(200)$, initial mass uniformly distributed on $[-0.3,0.3]$. Right: the corresponding characteristics.}
\label{fig:evol_p=7}
\end{figure}

\begin{figure}[h]
\centering
\includegraphics[width=0.42\textwidth]{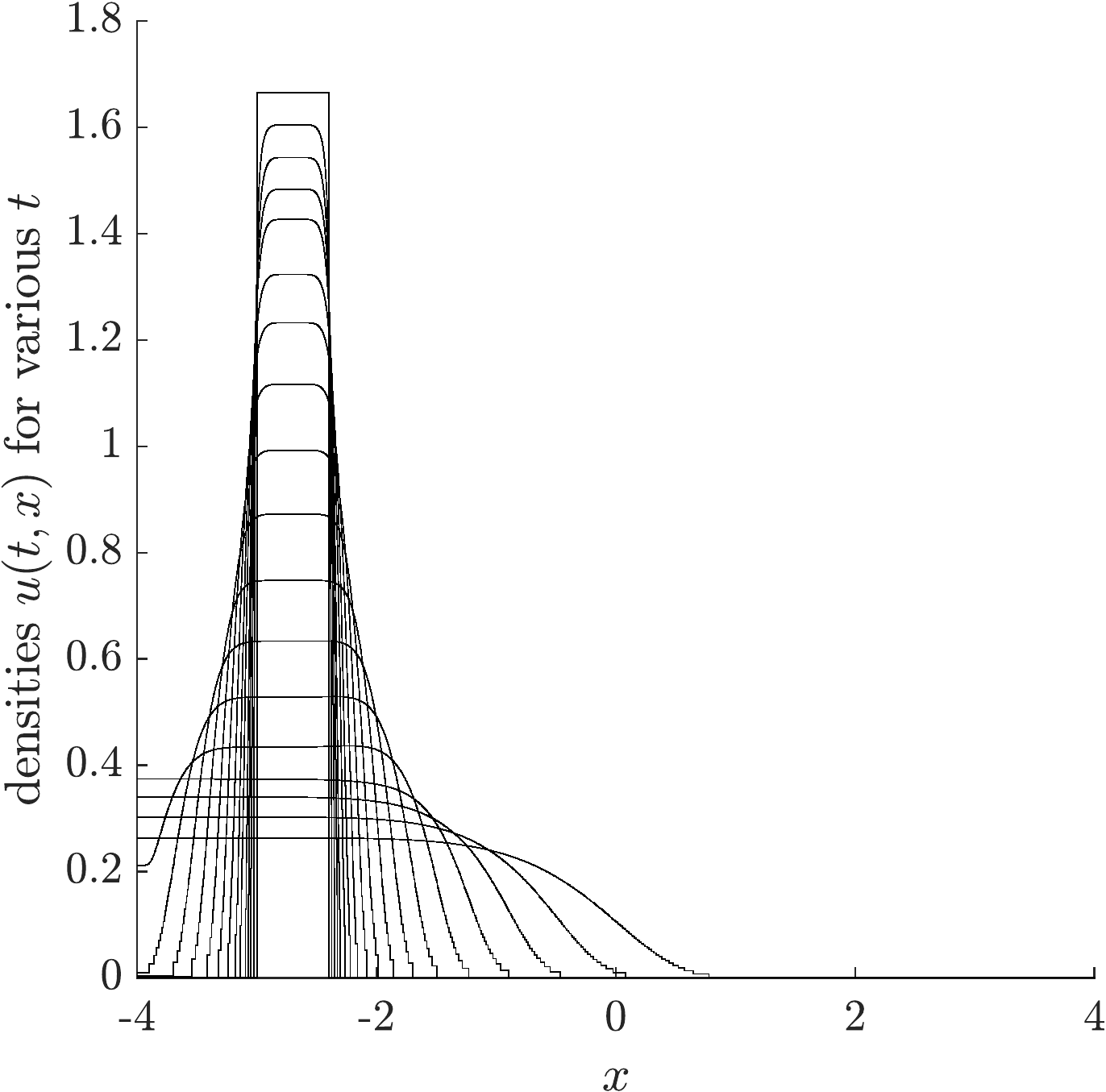}
\includegraphics[width=0.42\textwidth]{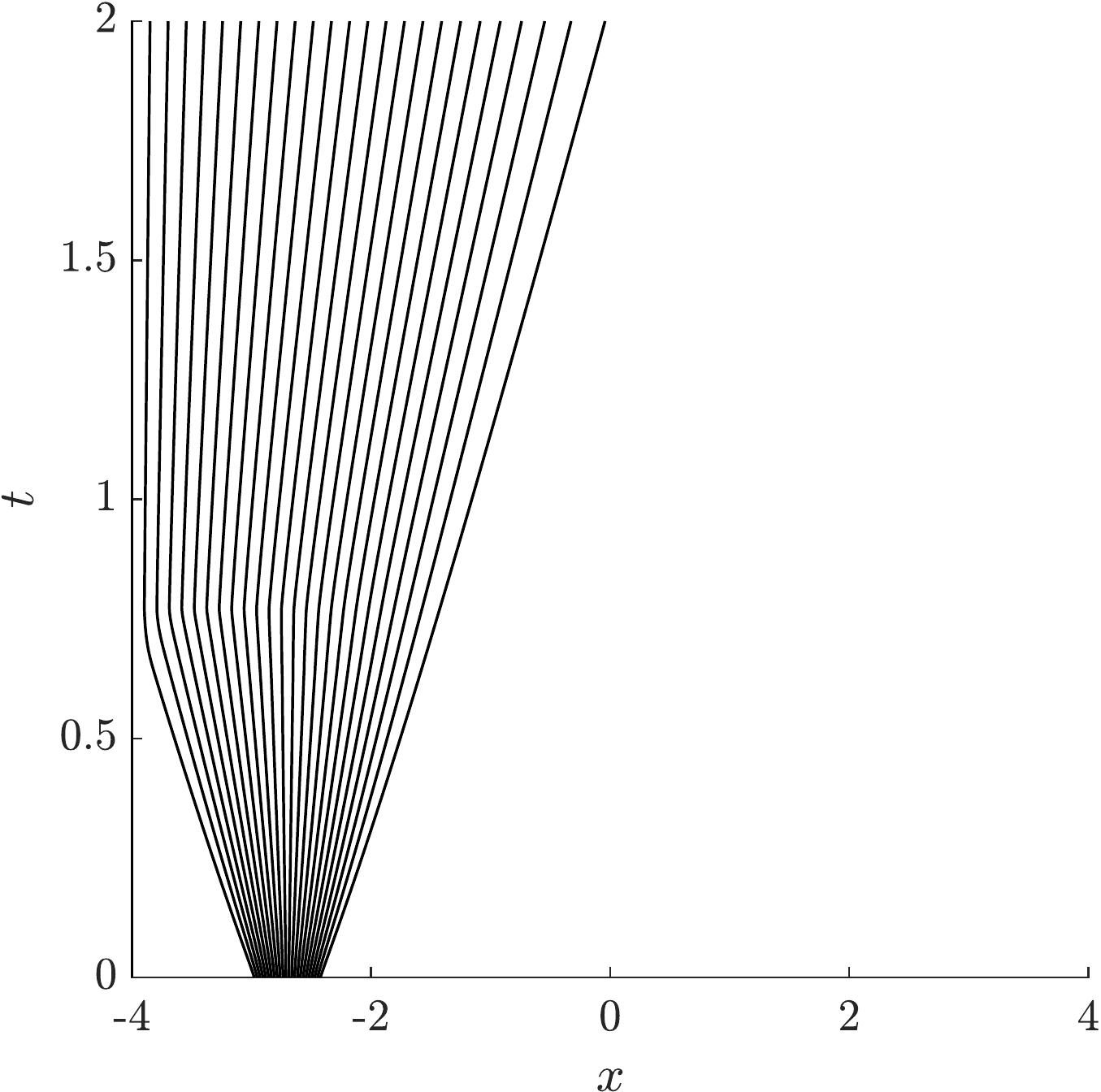}
\caption{Experiment: p-Wasserstein cost, linear diffusion. Left: Approximate densities $u(t,\cdot)$ at $t=0.01\cdot 10^k$, $k=0,0.12,0.24,\ldots,\log_{10}(200)$, initial mass uniformly distributed on $[-3,-2.4]$. Right: the corresponding characteristics.}
\label{fig:evol_p=7_left}
\end{figure}

\begin{figure}[h]
\centering
\includegraphics[width=0.42\textwidth]{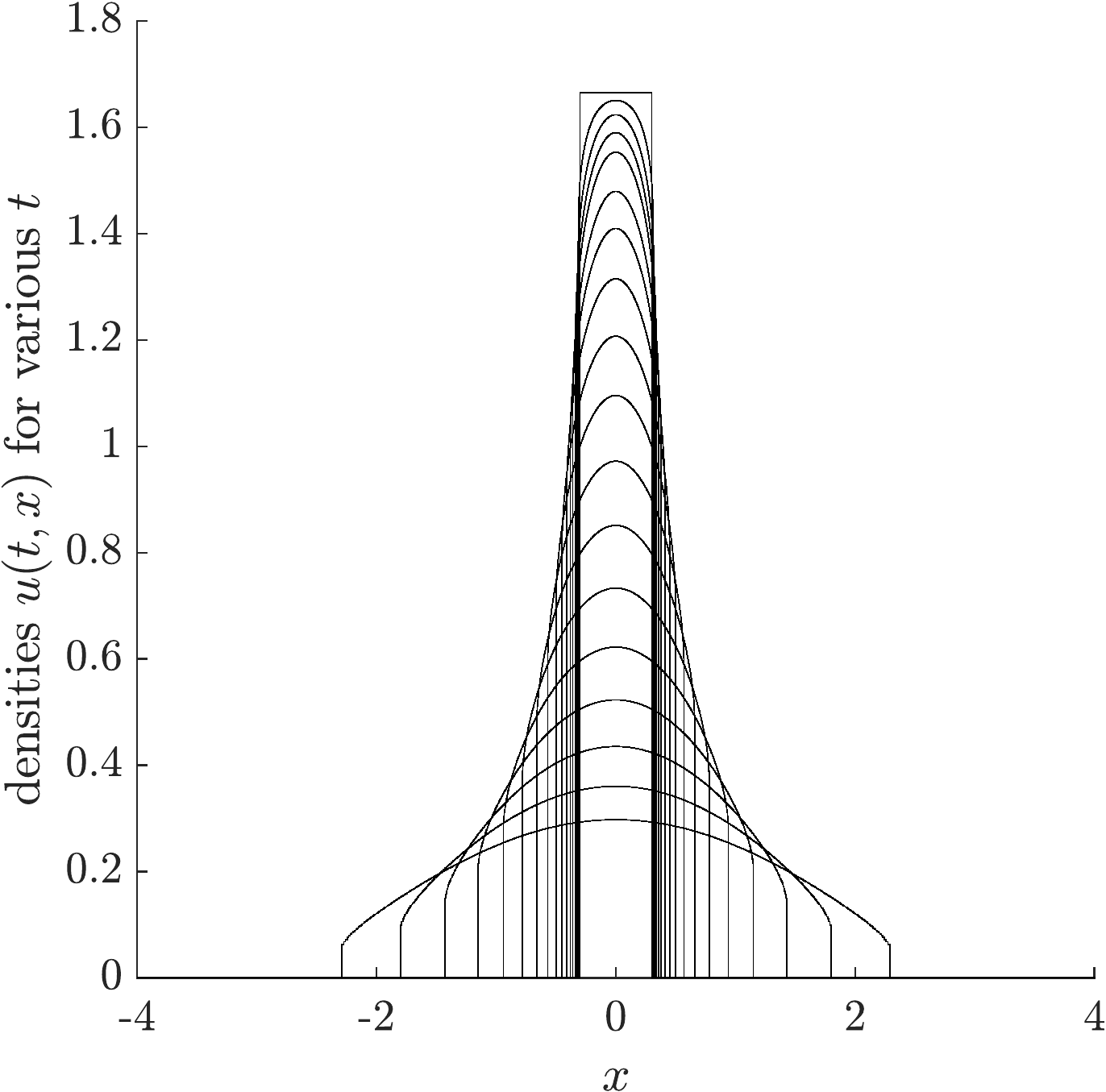}
\includegraphics[width=0.42\textwidth]{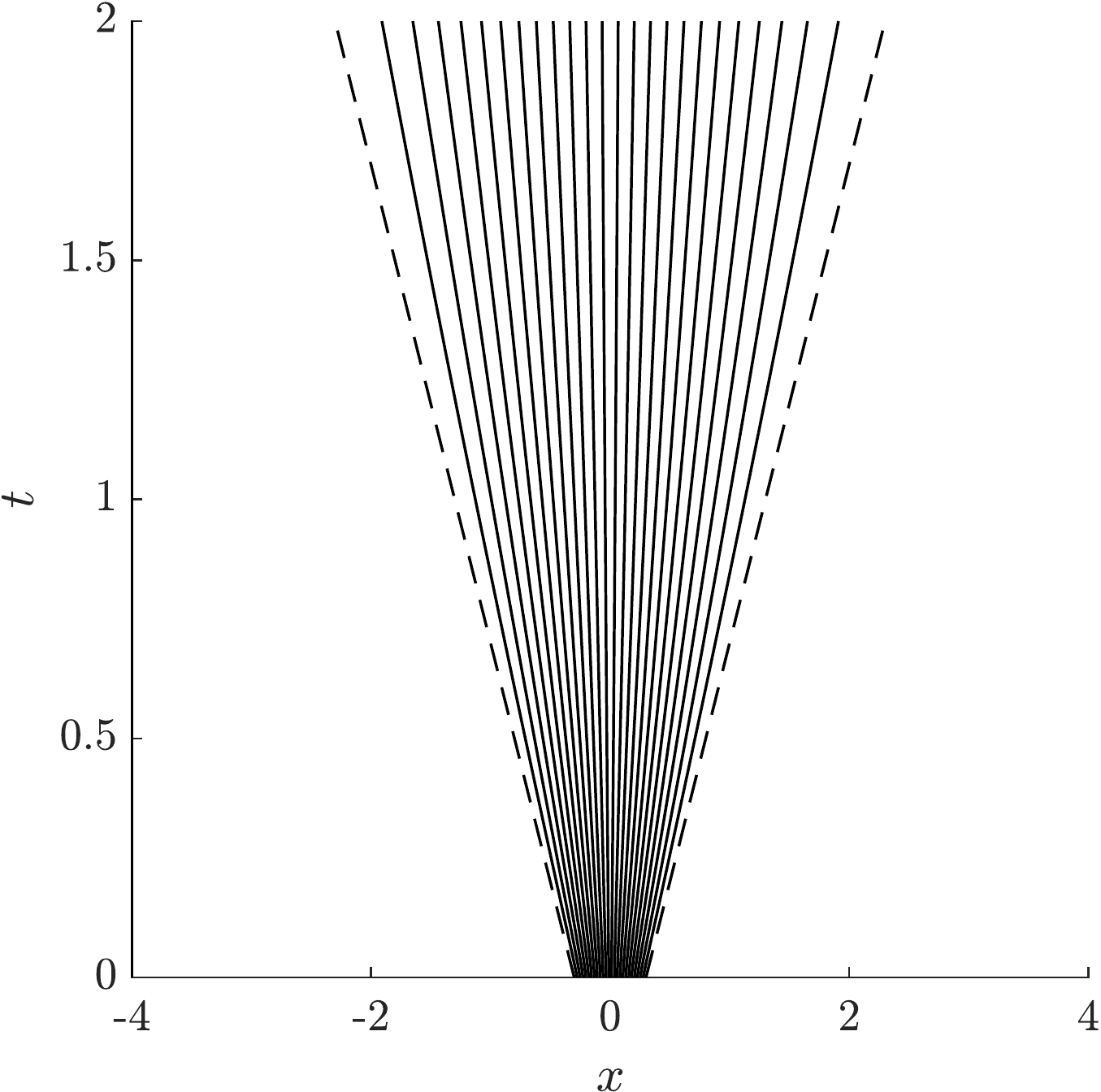}
\captionof{figure}{Experiment: relativistic cost, linear diffusion. Left: Approximate densities $u(t,\cdot)$ for $t=0.01\cdot 10^k$, $k=0,0.12,0.24,\ldots,\log_{10}(200)$, initial mass uniformly distributed on $[-0.3,0.3]$. Right: the corresponding characteristics (dashed: speed of light).}
\label{fig:evol_rel}
\end{figure}

\begin{figure}[h]
\centering
\includegraphics[width=0.445\textwidth]{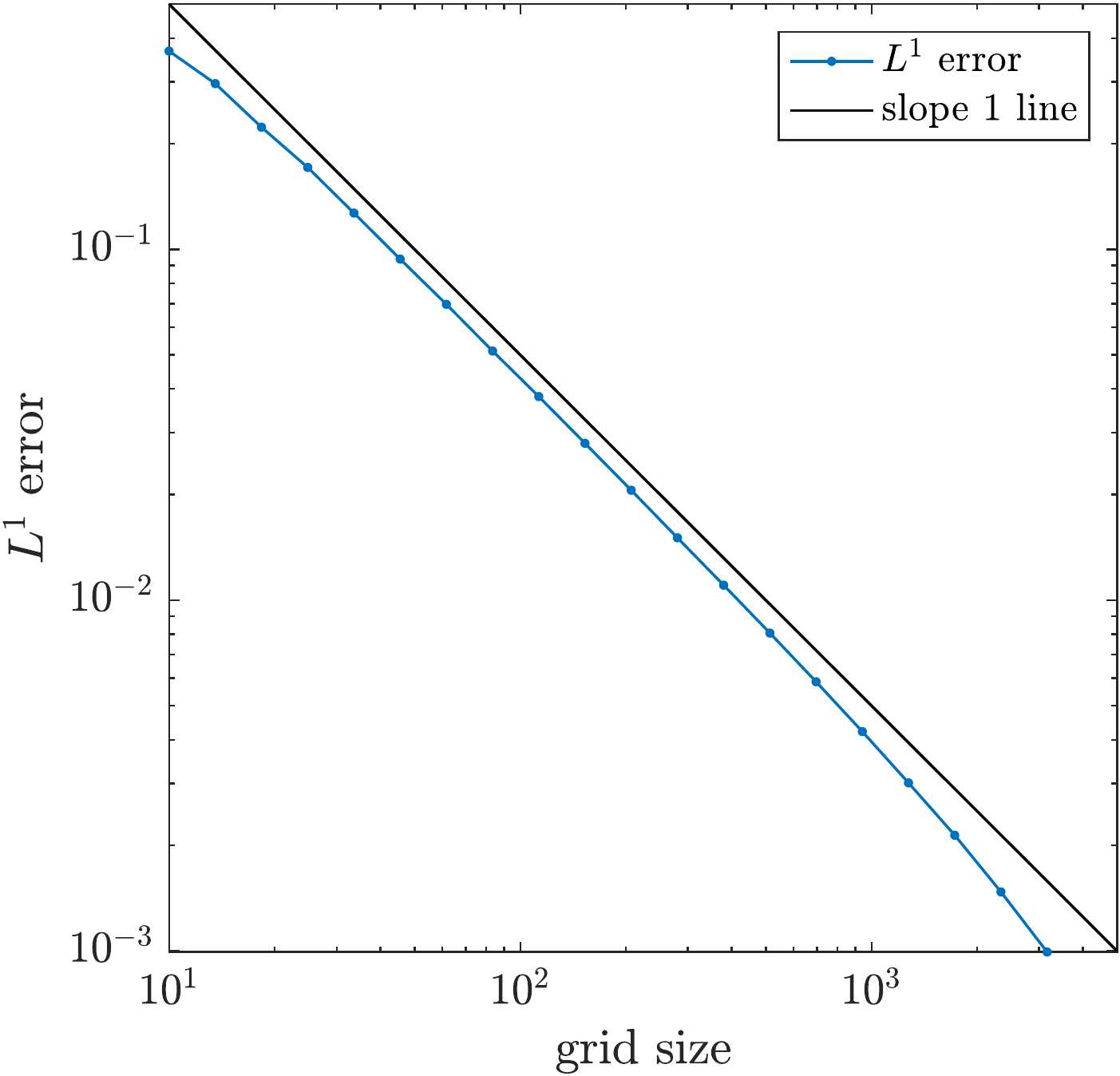}\quad
\includegraphics[width=0.45\textwidth]{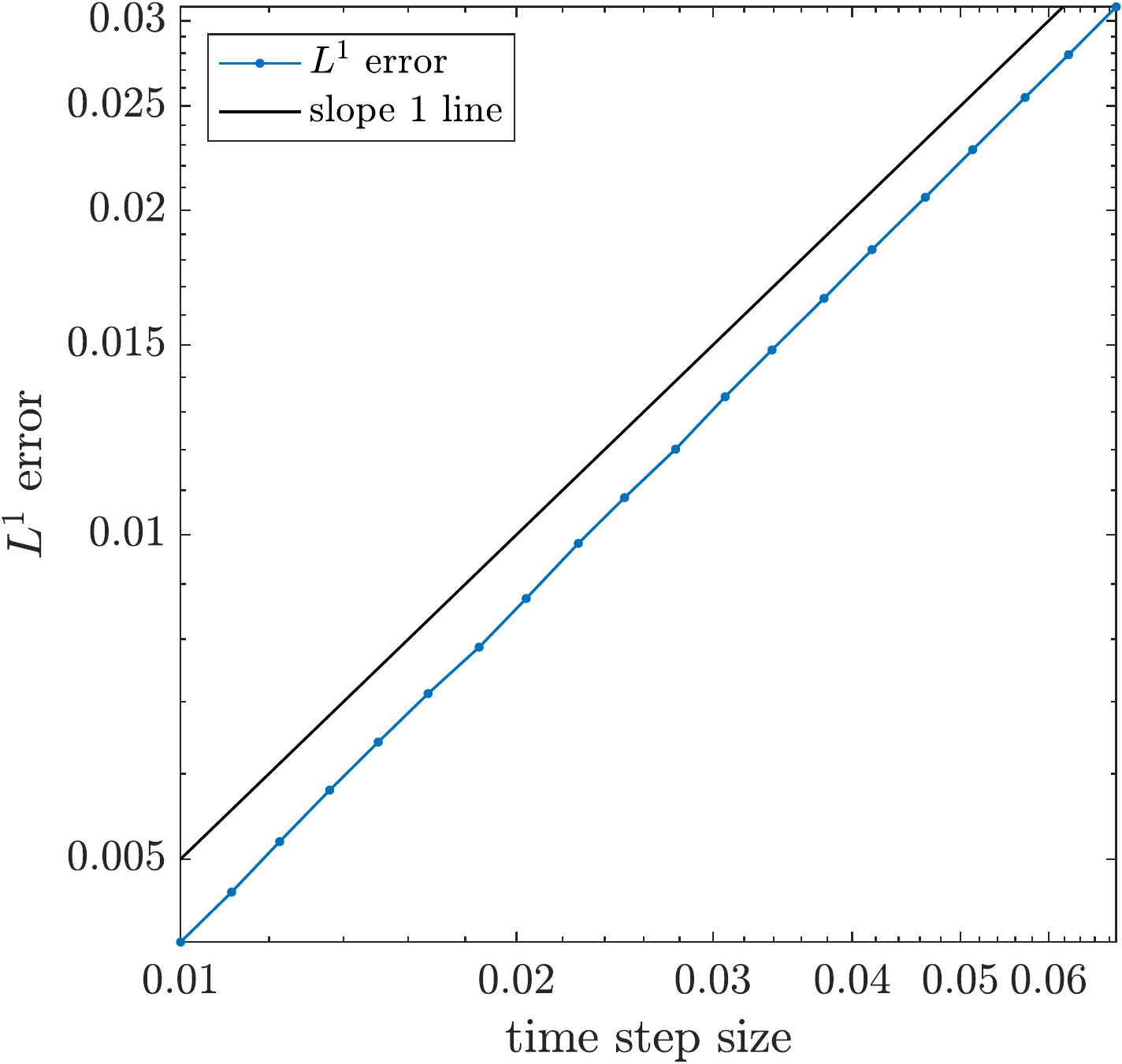}
\caption{Convergence analysis: relativistic cost, linear diffusion. $L^1$-error of the inverse distribution function in dependence of the grid size (left), and in dependence of the time step (right).}
\label{fig:convergence}
\end{figure}

\subsection{Porous medium equation}

As a second experiment, we consider the case of nonlinear diffusion with $m=\frac53$. We choose $p=\frac43$ so that we obtain the $q$-Laplace equation with $q=\frac{2-p}{p-1}=2$.  Fig.~\ref{fig:evol_rel_q-Laplace} shows the evolution of the densities and the associated characteristics.

\begin{figure}[h]
\centering
\includegraphics[width=0.42\textwidth]{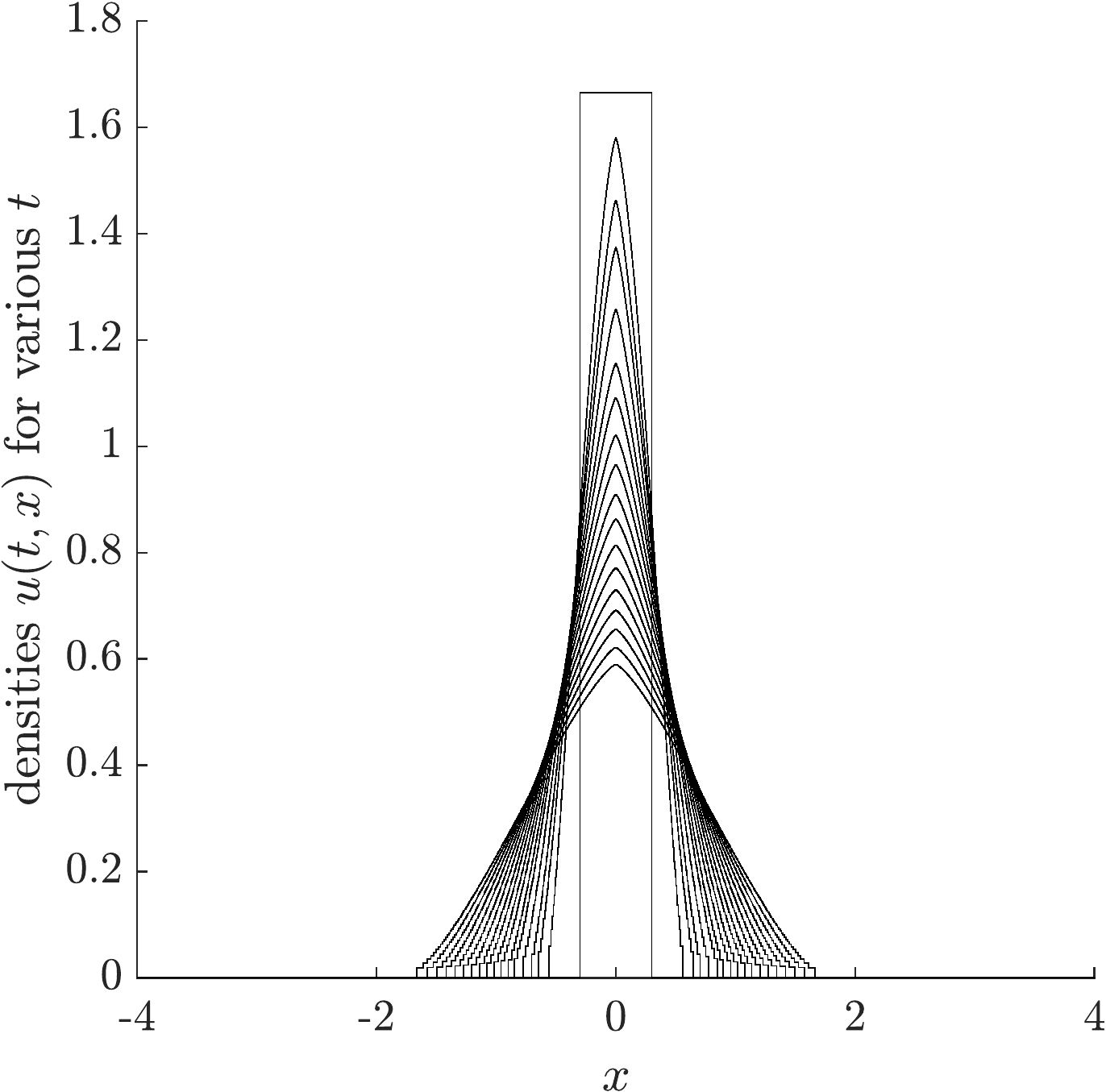}
\includegraphics[width=0.42\textwidth]{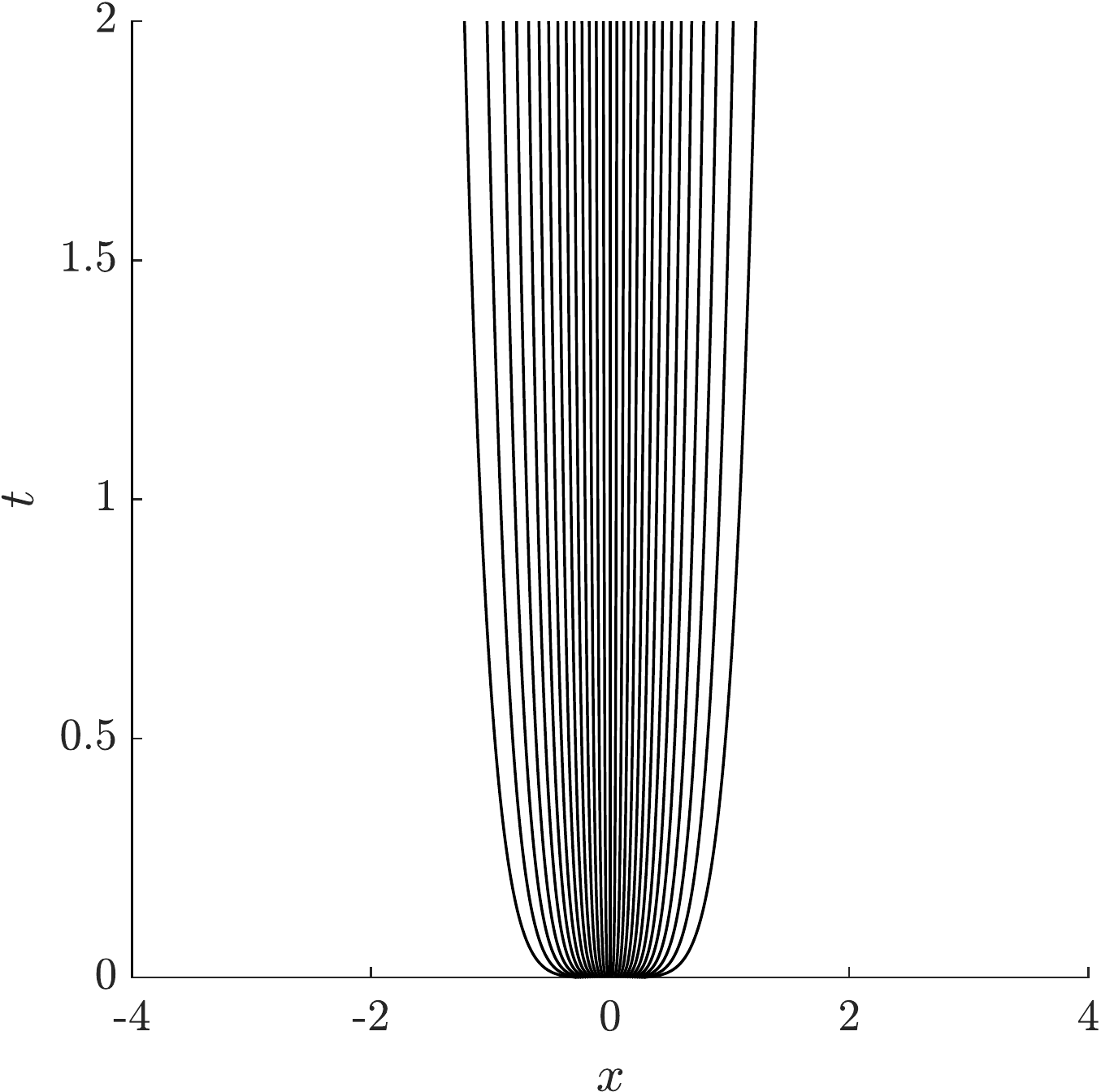}
\captionof{figure}{Experiment: $q$-Laplace ($p=\frac43, m=\frac53$). Left: Approximate densities $u(t,\cdot)$ for $t=0.01\cdot 10^k$, $k=0,0.12,0.24,\ldots,\log_{10}(200)$, initial mass uniformly distributed on $[-0.3,0.3]$. Right: the corresponding characteristics.}
\label{fig:evol_rel_q-Laplace}
\end{figure}

\clearpage

\appendix

\section{Auxiliary convergence results}
\subsection{Difference quotients and weak derivatives}\label{app:diff_quot} (c.f. \cite[Chapter 5.8.2]{Evans}) \\
\emph{
Let $f_k:\Omega\rightarrow \R$ be a sequence of real valued functions on a open rectangle $\Omega\in \R^2$, $r_k\searrow 0$ a sequence and let $p\in (1,\infty)$ as well as $\mathsf e_1, \mathsf{e}_2$ be the canonical basis vectors. If $f_k$ is uniformly bounded w.r.t. $L^p$ and, for some $i=1,2$, $\delta_i f_k(x):=\frac{f_k(x+r_k\mathsf{e}_i)-f_k(x)}{r_k}\ind{\Omega_{r_k,i}}(x)$ with $\Omega_{\varepsilon,i}:=\{x\in \Omega \mid x\pm \varepsilon \mathsf{e}_i\in \Omega\}$ is uniformly bounded w.r.t. $L^p(\Omega)$, then $f_k\rightarrow f_*$ in $L^p(I)$ and $\delta_i f_k\xrightarrow{weakly} \partial_{x_i} f_*$ on $\Omega$. 
}
\begin{proof}
The uniform bounds of $f_k$ and $\delta_i f_k$ in $L^p$ imply weak convergences of an (unrelabeled) subsequence of $f_k$, $\delta_i f_k$ to some limits $f_*:\Omega \rightarrow \R$ and $g:\Omega\rightarrow \R$ respectively, in $L^p$. 

Furthermore, we receive for every $\varepsilon>0$, $\phi\in C^\infty_c(\Omega_{\varepsilon,i})$ and $k$ big enough such that $r_k<\varepsilon$
\begin{align*}
\int_{\Omega_{\varepsilon,i}} g(x) \phi(x) \de x &= \lim_{k\rightarrow \infty} \int_{\Omega_{\varepsilon,i}} \frac{f_k(x+r_k\mathsf{e}_i)-f_k(x)}{r_k} \phi(x) \de x \\
&= - \lim_{k\rightarrow \infty} \int_{\Omega_{\varepsilon,i}} f_k(x) \frac{\phi(x-r_k\mathsf{e}_i)-\phi(x)}{r_k}  \de x \\
&=- \int_{\Omega_{\varepsilon,i}} f_k(x) \partial_{x_i} \phi(x)  \de x \;.
\end{align*}
This argument holds for every $\varepsilon>0$ so the claim follows by uniqueness of the weak derivative.
\end{proof}

\subsection{Strong convergence and Lipschitz-functions}\label{app:str_conv}
\emph{
Let $f_k:\Omega\rightarrow W$ be a sequence of real valued functions on an open $\Omega \in \R^n$ with values in some closed $W$ converging strongly w.r.t. $L^p(\Omega)$ for some $p\in (1,\infty)$ to $f_*:\Omega\rightarrow \R$. Let furthermore $g:\R\rightarrow \R$ be Lipschitz on $W$. Then $g(f_k)\rightarrow g(f_*)$ in $L^p(\Omega)$. 
}

\begin{proof}
Strong convergence of $f_k$ implies uniform boundedness for every subsequence a subsubsequence converging pointwise to some limit. By continuity of $g$ this carries over to $g(f_k)$ and said subsubsequences thereof. On the other hand, $f_k$ being bounded in $L^p$ implies, by Lipschitz-continuity $g(f_k)$ being bounded in $L^p$ and therefore, by the dominated convergence theorem, the pointwise convergence of said subsubsequences of $g(f_k)$ is actually a convergence w.r.t. $L^p$. Finally, since $f_k$ converges to $f_*$. all the cluster points of $g(f_k)$ are $g(f_*)$, which implies the claim. 
\end{proof}

\section{Matlab code}

\begin{lstlisting}[language=Matlab]
a = -4; b = 4;        % domain
k = 10000;            % no of grid points
delta = 1/k;          % mesh size (== mass on each subinterval)
T = 0.7; tau = 0.01;  % time horizon, step size
p = 7;                % cost function: p in (1,Inf) or p=Inf
l = 1.0;              % speed of light
m = 1;                % int. energy, m=1: lin. diff., m \in (1,inf):porous media

c = @(x) (abs(x) > l).*realmax + (abs(x) <= l).*(1-sqrt(1-(x/l).^2)); % for p==Inf
%c = @(x) (1/p).*abs(x).^p;                         % for p < Inf
h = @(x) -log(x);                                   % for m==1
%h = @(x) 1./(m*x.^(m-1));                          % for m > 1

int = @(x) delta*sum(x);                            % integral
d = @(x) (x(2:end)-x(1:end-1))/delta;               % finite difference
F1 = @(x,y) tau*int(c((x-y)/tau)) + int(h(d(x)));   % functional
F = @(x,y) F1([a,x,b],[a,y,b]);                     % fixed boundary
g = @(x) min(diff(x));                              % 

steps = floor(T/tau);                               % number of time steps 
X = zeros(steps,k+1); 
xi = linspace(delta,1.0-delta,k-1);                 % initial condition
r = -3.4; s = 3.7;
X(1,:) = [a, (r-a)/(1-delta)*xi+a+s, b];   

for n = 1:steps-1                                   % time stepping                              
    X(n+1,:) = [a, newton(F,g,X(n,2:end-1)), b]; n
end

figure(1); clf; plot(X,'k');                        % plotting
figure(2); clf; hold on   
for n = 1:steps
    stairs(X(n,:),[delta./diff(X(n,:)),0],'k');
end

function x = newton(F,g,x0)
    x = x0; dx = Inf;
    while norm(dx) > 1e-3
       Fx = F(hessianinit(x),x0);
       dx = (-Fx.hx\Fx.dx')';
       h = 1.0;
       while (F(x+h*dx,x0)>1e300) | (g(x+h*dx)<0), h = h/2; end
       x = x + h*dx;
    end
end
\end{lstlisting}

\bibliography{ConvLagDisc}

\bibliographystyle{plain}

\end{document}